\theoremstyle{definition}
\newtheorem{theorem}{Theorem}[section]
\newtheorem{corollary}[theorem]{Corollary}
\newtheorem{lemma}[theorem]{Lemma}
\newtheorem{claim}[theorem]{Claim}
\theoremstyle{definition}
\newtheorem{definition}[theorem]{Definition}
\newtheorem{question}[theorem]{Question}
\DeclareMathOperator{\Popt}{Pop_T}
\DeclareMathOperator{\Pops}{Pop_S}
\DeclareMathOperator{\aexc}{aexc}
\DeclareMathOperator{\Aexc}{Aexc}
\DeclareMathOperator{\cyc}{cyc_{>1}}
\DeclareMathOperator{\sgn}{sgn}
\newcommand{\equic}[2][1 cm]{
  \draw (0,0) circle (#1);
  \foreach \i in {1,...,#2} {
    \coordinate (N\i) at (-\i*360/#2:#1);
    \fill[black] (N\i) circle (0.05 cm) node[inner sep=4pt,text depth=2pt, shift = (-\i*360/#2:0.3)]{\i};
  }
  }
 \newcommand{\equib}[2][1 cm]{
  \draw (0,0) circle (#1);
  \foreach \i [evaluate=\i as \j using {-\i}] in {1,...,#2} {
    \coordinate (N\i) at (-\i*180/#2:#1);
    \fill[black] (N\i) circle (0.05 cm) node[shift = (-\i*180/#2:0.3)]{\i};
    \coordinate (P\i) at (\j*180/#2+180:#1);
    \fill[black] (P\i) circle (0.05 cm) node[shift = (\j*180/#2+180:0.3)]{\pgfmathprintnumber{\j}};
  }
  }
\newcommand{\equicnl}[2][1 cm]{
  \draw (0,0) circle (#1);
  \foreach \i in {1,...,#2} {
    \coordinate (N\i) at (-\i*360/#2:#1);
    \fill[black] (N\i) circle (0.05 cm);
  }
  }
\newcommand{\equid}[2][1 cm]{
  \draw (0,0) circle (#1);
  \foreach \i [evaluate=\i as \j using {-\i}] in {1,...,#2} {
    \coordinate (N\i) at (-\i*180/#2:#1);
    \fill[black] (N\i) circle (0.05 cm) node[shift = (-\i*180/#2:0.3)]{\i};
    \coordinate (P\i) at (\j*180/#2+180:#1);
    \fill[black] (P\i) circle (0.05 cm) node[shift = (\j*180/#2+180:0.3)]{\pgfmathprintnumber{\j}};
  }
  \coordinate (ori) at (0,0);
  \pgfmathtruncatemacro{\cent}{#2+1};
  \fill[black] (ori) circle (0.05cm) node[right]{\pgfmathprintnumber{\cent}};
  \draw (ori) node[left]{\pgfmathprintnumber{-\cent}};
  }
  \newcommand{\equidd}[2][1 cm]{
  \draw (0,0) circle (#1);
  \foreach \i [evaluate=\i as \j using {-\i}] in {1,...,#2} {
    \coordinate (N\i) at (-\i*180/#2:#1);
    \fill[black] (N\i) circle (0.05 cm) node[shift = (-\i*180/#2:0.3)]{\i};
    \coordinate (P\i) at (\j*180/#2+180:#1);
    \fill[black] (P\i) circle (0.05 cm) node[shift = (\j*180/#2+180:0.3)]{\pgfmathprintnumber{\j}};
  }
  \coordinate (ori) at (0,0);
  \pgfmathtruncatemacro{\cent}{#2+1};
  \fill[black] (ori) circle (0.05cm) node[below]{\pgfmathprintnumber{\cent}};
  \draw (ori) node[above]{\pgfmathprintnumber{-\cent}};
  }
\title{Dynamics of Pop-Tsack Torsing}
\author{Anqi Li}
\address{Massachusetts Institute of Technology, Cambridge, MA 02139, USA}
\email{anqili@mit.edu}
\begin{document}

\maketitle

\begin{abstract}
For a finite irreducible Coxeter group $(W,S)$ with a fixed Coxeter element $c$ and set of reflections $T$, Defant and Williams define a pop-tsack torsing operation $\Popt\colon W \to W$ given by $\Popt(w) = w \cdot \pi_T(w)^{-1}$ where $\pi_T(w) = \bigvee_{t \leq_{T}w, \ t \in T}^{NC(w,c)}t$ is the join of all reflections lying below $w$ in the absolute order in the non-crossing partition lattice $NC(w,c)$. This is a ``dual'' notion of the pop-stack sorting operator $\Pops$ introduced by Defant as a way to generalize the pop-stack sorting operator on $\mathfrak{S}_n$ to general Coxeter groups. Define the forward orbit of an element $w \in W$ to be $O_{\Popt}(w) = \{w, \Popt(w), \Popt^2(w), \ldots \}$. Defant and Williams established the length of the longest possible forward orbits $\max_{w \in W}|O_{\Popt}(w)|$ for Coxeter groups of coincidental types and type $D$ in terms of the corresponding Coxeter number of the group. In their paper, they also proposed multiple conjectures about enumerating elements with near maximal orbit length. We resolve all the conjectures that they have put forth about enumeration, and in the process we give complete classifications of these elements of Coxeter groups of types $A,B$ and $D$ with near maximal orbit lengths. 
\end{abstract}

\section{Introduction}

The study of \emph{combinatorial dynamics} investigates combinatorially defined operations on sets of objects. Specifically, given a set $X$ and a function $f \colon X \to X$, it is of interest to study the \emph{forward orbit} of an element $x \in X$ denoted as $O_f(x) = \{ x, f(x), f^2(x), \ldots \}$. A natural question that one might ask is what the distribution of $|O_f(x)|$ looks like. 

One well-studied class of combinatorial operations is that of sorting operations, where for the base set $X$ we take the symmetric group $\mathfrak{S}_n$ -- the set of all permutations of $[n] := \{1, 2, \ldots, n \}$. There are many well-known sorting procedures in the literature (see for instance \cite{ABBHL19,ABH21,B03,CF21,K73} and the many references therein). Of most relevance to us is the \emph{pop-stack sorting} operator $\Pops$, which reverses all of the descending runs of a permutation while keeping different descending runs in the same relative order. Ungar \cite{U82} proved that $\max_{\pi \in \mathfrak{S}_n}|O_{\Pops}(\pi)| = n$. The properties of the pop-stack sorting operator have recently been of interest to enumerative combinatorialists, yielding more modern proofs of the maximum length of forward orbits and other properties of $\Pops$ \cite{ABBHL19,ABH21,CG19,EG21}.

Defant \cite{D22} gives a natural generalization of the $\Pops$ operator to an arbitrary Coxeter group $W$. When $W = A_{n-1}$, this $\Pops$ operator recovers the usual pop-stack sorting operator on $\mathfrak{S}_n$. We refer the interested reader to \cite[Section 1, Section 2.2]{D22} for more details on the motivation for this construction.

We briefly describe this generalization here. Let $(W,S)$ be a finite Coxeter group with corresponding set of simple reflections $S$. Denote the left weak order on $W$ by $\leq_S$. Let $w_{\circ}$ be the longest element of $W$. For $J \subset S$, let $w_{\circ}(J)$ be the longest element in the parabolic subgroup of $W$ generated by $J$. We define the longest-element projection $\pi_S \colon W \to \{ w_{\circ}(J): J \subset S \} $ as \[\pi_S(w) = \bigvee_{s \leq_{S} w, s \in S}s,\] where the join is taken in the weak order lattice. With this notation in place, we define the generalized pop-stack sorting operation $\Pops: W \rightarrow W$ as $\Pops(w) := w \cdot \pi_S(w)^{-1}$. 

In \cite{D22}, Defant gives a generalization of Ungar's theorem. Defant proves $\sup_{w \in W}|O_{\Pops}(w)| = h$, where $h$ is the Coxeter number of $W$. Of note is that Defant's proof is type-independent, and avoids passing to combinatorial models of finite Coxeter groups. 

Expanding on the classical theory of Coxeter groups, recent work \cite{A09,Be03} in this area considers substituting the set of simple reflections $S$ with a larger generating set. More precisely, we can consider the conjugate closure $T:= \{w sw^{-1}: s \in S, w \in W \}$ of $S$, usually called the generating set of all reflections. In analogy with the weak order $\leq_S$, there exists an \emph{absolute order} $\leq_T$. Let $\ell_T(w)$ for $w \in W$ denote the minimum $T$-word length of $w$. The \emph{absolute order} $\leq_T$ on $W$ is defined by $\pi \leq_T \mu$ if and only if $\ell_T(\pi) = \ell_T(\mu) + \ell_T(\pi^{-1} \mu)$. 

It turns out in general that $\leq_T$ does not contain a maximal element. In order to construct a lattice out of $\leq_T$, we define the poset of \emph{non-crossing partitions} relative to a Coxeter element $c$ in $W$, by $NC(W,c):=[e,c] := \{ w \in W: e \leq_T w \leq_T c\}$. Brady and Watt \cite{BW08} gave the first uniform proof that $NC(W,c)$ is indeed a lattice. Bessis \cite{Be03} constructs a so-called ``dual'' Coxeter group where we consider $(W,T)$, and replace the left weak order with $NC(W,c)$. With this set-up, Defant and Williams \cite{DN21} define a ``dual'' version of $\Pops$ which they call \emph{pop-tsack torsing} and denote by $\Popt$. In what follows, fix a Coxeter element $c$ of $W$.  The \emph{non-crossing projection} $\pi_T\colon W \to NC(W,c)$ is defined to be 
\[ \pi_T(w,c) = \bigvee_{t \leq_T w, t \in T} t\]
where the join is taken in $NC(W,c)$. 

\begin{definition}
The \emph{Coxeter pop-tsack torsing} operator $\Popt\colon W \to W $ is the map
\[ \Popt(w,c) = w \cdot \pi_T(w,c)^{-1}.\]
\end{definition}

We may ask similar questions about the forward orbits under the $\Popt$ operator as we did in the context of $\Pops$. The astute reader may notice that our definition of $\Popt$ is a priori dependent on the choice of the Coxeter element $c$. Defant and Williams \cite{DN21} prove that the dynamical structure of $\Popt$ does not depend on $c$, leaving us free to pick $c$ in a way that simplifies our analysis.

The main results in \cite{DN21} are the following analogues of Defant's result on maximum forward orbit length for $\Pops$ in the context of $\Popt$.  

\begin{theorem}[{\cite[Theorem 5.1]{DN21}}]
Let $W$ be a Coxeter group of coincidental type with a fixed Coxeter element $c$ and Coxeter number $h$. Then:
\begin{itemize}
    \item $\Popt^{h-1}(w) = e$ for all $w \in W$.
    \item $\max_{w \in W}|O_{\Popt}(w)| = h$.
    \item The only forward orbit of size $h$ is $O_{\Popt}(c^{-1})$. 
\end{itemize}
\end{theorem}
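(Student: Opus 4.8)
The plan is to convert the three assertions into statements about a single fixed-point condition. The starting observation is that $\Popt(w) = e$ if and only if $w \in NC(W,c)$: since the non-crossing partition lattice is atomistic (every element is the join of the reflections lying below it), each $w \in NC(W,c)$ satisfies $\pi_T(w) = w$, whence $\Popt(w) = w w^{-1} = e$; conversely $\Popt(w) = e$ forces $w = \pi_T(w) \in NC(W,c)$. Consequently, if $w$ first lands in $NC(W,c)$ after $s$ steps then one further step reaches $e$, so $|O_{\Popt}(w)| = s+2$ (with $s+1$ only in the degenerate case already at $e$). Thus the three bullets become: (i) every $w$ lands in $NC(W,c)$ within $h-2$ steps, which is exactly $\Popt^{h-1}(w) = e$; (ii) $c^{-1}$ takes exactly $h-2$ steps; and (iii) it is the only such element.

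For the lower bound in (ii) I would compute the orbit of $c^{-1}$ directly. The key lemma is that $\pi_T(c^{\,j}) = c$ for every $1 \le j \le h-1$. Granting this and writing $c^{-k} = c^{\,h-k}$, one gets $\Popt(c^{-k}) = c^{-k}\cdot c^{-1} = c^{-(k+1)}$, so that
\[ O_{\Popt}(c^{-1}) = \{c^{-1}, c^{-2}, \ldots, c^{-(h-1)}, e\} \]
has exactly $h$ distinct elements, noting $c^{-(h-1)} = c \in NC(W,c)$ maps to $e$. To prove the lemma I would exhibit the reflections below $c^{\,j}$ and show their join in $NC(W,c)$ is the top $c$; this is where the non-crossing structure is decisive, since the $NC$-join is the non-crossing closure and is strictly coarser than the set-partition join. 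In type $A$ the reflections below $c^{\,j}$ are the transpositions $(a\,b)$ with $a \equiv b \pmod{\gcd(n,j)}$; for $\gcd(n,j) \ge 2$ the residue blocks pairwise cross, forcing the closure to the single-block partition $c$, while for $\gcd(n,j) = 1$ one already has every transposition. The dihedral case $I_2(m)$ is immediate because every reflection lies below every nontrivial rotation $c^{\,j}$, and the types $B_n$ and $H_3$ succumb to the same crossing principle via their explicit non-crossing models.

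The main obstacle is the uniform upper bound (i). Reflection length is not monotone along the orbit of $c^{-1}$, so a subtler device is needed. My plan is to construct, for each coincidental type, a combinatorial statistic $\phi\colon W \to \mathbb{Z}_{\ge 0}$ with (a) $\phi(w) = 0$ exactly when $w \in NC(W,c)$, (b) $\phi(\Popt(w)) \le \phi(w) - 1$ whenever $w \notin NC(W,c)$, and (c) the sharp ceiling $\phi(w) \le h-2$. Given such a $\phi$, every $w$ reaches $NC(W,c)$ within $h-2$ steps, giving $\Popt^{h-1}(w) = e$, and the orbit of $c^{-1}$ shows $\phi(c^{-1}) = h-2$ is attained. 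In type $A$ I would take $\phi$ to measure how far the cycle partition of $w$ is from being non-crossing with respect to $c$, refined so that one $\Popt$ step strictly reduces it, and treat the signed-permutation and $H_3$ models analogously. Property (c) is the crux, and I expect it to be the hardest part; an appealing alternative I would pursue in parallel is to exploit that for coincidental types the degrees form an arithmetic progression to make $\Popt$ and $\Pops$ dynamically comparable, so that Defant's type-free ceiling $\sup_w |O_{\Pops}(w)| = h$ supplies the bound.

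Finally, uniqueness (iii) reduces cleanly to showing that $\phi$ attains its ceiling only at $c^{-1}$, i.e.\ $\phi^{-1}(h-2) = \{c^{-1}\}$: an orbit of size $h$ forces $s = h-2$, hence $\phi(w) = h-2$. I would prove this with the crossing analysis already developed for (ii). Indeed, if $\phi(w) = h-2$ then $\phi$ must drop by exactly one at every step, so $\phi(\Popt^{k}(w)) = h-2-k$; tracing this rigidity backwards, one shows inductively that $c^{-(k+1)}$ has a unique $\Popt$-preimage of maximal $\phi$, namely $c^{-(k+2)}$, which forces $\Popt^{k}(w) = c^{-(k+1)}$ for all $k$ and hence $w = c^{-1}$. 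Thus once the monovariant $\phi$ and its unique maximizer are in hand, all three statements follow.
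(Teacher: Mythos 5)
First, a point of orientation: this theorem is imported from \cite{DN21} and the paper you are reading does not reprove it; the closest it comes is quoting the key ingredient of the original argument as Lemma~\ref{lem:DN-A} (and Lemma~\ref{lem:DN-B} in type $B$), namely the antiexceedance statistic together with the identity $\aexc(\Popt(w)) = \aexc(w) - \cyc(\pi_T(w))$. Your overall skeleton matches the actual proof strategy: the equivalence $\Popt(w) = e \iff w \in NC(W,c)$ is correct (atomicity of the intervals $[e,w]_T$), and your lower-bound computation of $O_{\Popt}(c^{-1})$ via the lemma $\pi_T(c^j) = c$ for $1 \le j \le h-1$ is sound and is essentially how \cite{DN21} exhibits the length-$h$ orbit; your crossing argument for why the residue classes mod $\gcd(n,j)$ join to the full cycle is right.

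The genuine gap is that the entire content of parts (i) and (iii) is delegated to a monovariant $\phi$ that you never construct. Properties (a)--(c) of $\phi$, and in particular the sharp ceiling $\phi \le h-2$ with unique maximizer $c^{-1}$, are exactly where the work lies, and ``measure how far the cycle partition of $w$ is from being non-crossing'' is not yet a definition --- the paper's own discussion (the counterexample $w = (1\,4\,5\,8\,7\,6\,9\,3\,2)$ in Section 2, where $\cyc(\pi_T(w))$ jumps from $1$ to $2$ under $\Popt$) shows that naive ``distance from non-crossing'' quantities do not behave monotonically. The statistic that actually works is $\aexc$, normalized so that $\aexc(w) = 0$ only at $w = e$ (not on all of $NC(W,c)$ as in your setup), with maximum $h-1$ attained uniquely at $c^{-1}$ and the exact decrement $\cyc(\pi_T(w)) \ge 1$; all three bullets then fall out at once, including uniqueness, without the backward-rigidity induction you sketch. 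Your proposed fallback --- transferring Defant's bound for $\Pops$ to $\Popt$ via the arithmetic progression of degrees --- has no evident mechanism behind it and should not be relied upon. So: correct architecture and a complete lower bound, but the upper bound and uniqueness are a plan rather than a proof until the monovariant is actually produced and its three properties verified in each coincidental type.
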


\begin{theorem}[{\cite[Theorem 6.1]{DN21}}]
Let $W$ be a Coxeter group of type $D_n$. Then:
\begin{itemize}
    \item $\Popt^{2n-3}(w) = e$ for all $w \in W$.
    \item $\max_{w \in W}\left| O_{\Popt}(w)\right| = 2n-2$. 
\end{itemize}
\end{theorem}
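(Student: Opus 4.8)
The plan is to prove the type $D_n$ statement by passing to an explicit combinatorial model of the Coxeter group and analyzing the $\Popt$ operator concretely. The natural choice is to realize $D_n$ as the group of signed permutations of $\{\pm 1, \ldots, \pm n\}$ having an even number of negative entries, which is the standard combinatorial model. Under this identification, reflections $t \in T$ correspond to specific signed transpositions, and the non-crossing partition lattice $NC(D_n, c)$ for a well-chosen Coxeter element $c$ admits the pictorial description (via the circular diagrams that the paper's \texttt{tikz} macros anticipate) developed by Athanasiadis--Reiner and Bessis--Corran. With a fixed convenient $c$, I would compute $\pi_T(w,c)$ as the join of all reflections below $w$, reducing $\Popt(w) = w \cdot \pi_T(w,c)^{-1}$ to a manipulation of signed permutations that can be tracked combinatorially.

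\textbf{The two claims to establish.} First, the global bound $\Popt^{2n-3}(w) = e$ for all $w \in W$: the strategy here is to show that applying $\Popt$ strictly decreases some non-negative integer statistic (a ``depth'' or absolute-length-type quantity) after at most a bounded amount, and more precisely that the orbit terminates at $e$ in at most $2n-3$ steps. The cleanest route is an inductive/monovariant argument: identify a statistic $\Phi(w)$ (plausibly built from $\ell_T$ together with the cycle/block structure of $w$ in $NC(D_n,c)$) and show $\Phi(\Popt(w)) \le \Phi(w) - 1$ whenever $w \neq e$, with $\Phi$ bounded above by $2n-3$. Second, the sharp maximum $\max_{w}|O_{\Popt}(w)| = 2n-2$: the upper bound $|O_{\Popt}(w)| \le 2n-2$ follows immediately from the first bullet, since an orbit of an element with $\Popt^{2n-3}(w)=e$ can contain at most the $2n-2$ distinct elements $w, \Popt(w), \ldots, \Popt^{2n-3}(w)=e$. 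So the real content of the second bullet is to exhibit at least one element $w^\ast$ achieving orbit length exactly $2n-2$.

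\textbf{Constructing the extremal element.} By analogy with the coincidental-type theorem, where the unique longest orbit is $O_{\Popt}(c^{-1})$, I would first test $w^\ast = c^{-1}$ and, if that falls short in type $D$, search among elements closely related to $c^{-1}$ (for instance, a product of $c^{-1}$ with a suitable reflection, reflecting the fact that in type $D$ the Coxeter number $h = 2n-2$ itself equals the claimed bound). The key computation is to iterate $\Popt$ on this candidate and verify that the successive images $\Popt^k(w^\ast)$ are pairwise distinct for $k = 0, 1, \ldots, 2n-3$ and that $\Popt^{2n-3}(w^\ast) = e$ while $\Popt^{2n-4}(w^\ast) \neq e$. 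This is where the explicit combinatorial model pays off: each application of $\Popt$ should shift or rotate the signed-permutation data in a predictable, periodic way, making the distinctness of the iterates transparent.

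\textbf{Main obstacle.} I expect the hard part to be the global upper bound $\Popt^{2n-3}(w)=e$, specifically pinning down the correct monovariant statistic $\Phi$ and proving the decrement $\Phi(\Popt(w)) \le \Phi(w)-1$ uniformly. The difficulty is that $\pi_T(w,c)$ is a \emph{join} in the non-crossing lattice, so understanding how $\pi_T$ interacts with the combinatorial structure of $w$ requires controlling the lattice operation $\bigvee$ in $NC(D_n,c)$ explicitly; unlike the weak-order side, joins in the non-crossing partition lattice of type $D$ are genuinely subtle because of the special role of the central/zero block in the $D_n$ diagram. Handling the case analysis for whether a given block of $w$ ``straddles'' the center of the circular non-crossing diagram is likely to be the source of most of the technical work, and I anticipate needing a separate argument (or a correction term in $\Phi$) to cover configurations involving the distinguished central node. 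Once the decrement lemma is in hand, both bullets follow, the first directly and the second by combining the resulting bound with the extremal construction above.
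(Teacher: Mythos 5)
First, a point of orientation: this paper does not actually prove the statement you were given --- it is \cite[Theorem 6.1]{DN21}, quoted as an input and used throughout Section~\ref{sec:D}; the authors explicitly note that the proof in \cite{DN21} is type-dependent and relies on the Athanasiadis--Reiner circular model for $NC(D_n,c)$. So there is no ``paper's own proof'' to match, only the machinery the paper builds around the result. Your reduction of the second bullet to the first is correct: once $\Popt^{2n-3}(w)=e$ for all $w$, every forward orbit has at most $2n-2$ elements, and the lower bound only requires one witness; $c^{-1}$ does achieve length $2n-2$, consistent with Theorem~\ref{thm:type-D}.

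The genuine gap is in the first bullet, which you reduce to ``find a statistic $\Phi\le 2n-3$ with $\Phi(\Popt(w))\le\Phi(w)-1$'' without producing one, and the candidates you name would fail. A $\Phi$ built from $\ell_T$ cannot work: $\ell_T(w)\le n$ for all $w\in D_n$, so a strictly decreasing $\ell_T$-type monovariant would force every orbit to have length at most $n+1$, contradicting the existence of orbits of length $2n-2$ (already in $D_4$ there are orbits of length $6>5$). The statistic that works in types $A$ and $B$ is the number of antiexceedances, which drops by exactly $\cyc(\pi_T(w))\ge 1$ per step (Lemmas~\ref{lem:DN-A} and~\ref{lem:DN-B}); but in type $D$ no such clean identity is available, because of the central node $\{n,\overline{n}\}$, zero blocks, and balanced cycles. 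The paper's own analysis makes the subtlety concrete: a pre-$D$-splittable $w$ has $w|_{n-1}$ with only $2n-4$ antiexceedances yet attains orbit length $2n-2=(2n-4)+2$, so a one-decrement-per-step accounting must be repaired by an extra increment somewhere --- this is exactly the role of the ``pierced'' step and Claim~\ref{claim:tot-pierced} in Section~\ref{sec:D}. The actual proof of \cite[Theorem 6.1]{DN21} instead tracks individual values around the circle: the key inputs are \cite[Lemma 6.5]{DN21} (quoted here as Lemma~\ref{lem:DN-6.5}), asserting that for $j$ with $w^{-1}(j)\notin\{n,\overline{n}\}$ the value $(\Popt^{2n-4}(w))^{-1}(j)$ is $j$ or its predecessor, together with separate control of the coordinates interacting with $\{n,\overline{n}\}$ (Lemma~\ref{lem:DN-zero}). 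Your plan correctly anticipates that the central node is the main obstacle, but as written the decrement lemma --- the entire content of the theorem --- is missing, and the most natural instantiations of it are false.
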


Defant and Williams also establish results about $\Popt$ forward orbits for Coxeter groups of exceptional types. We refer the interested reader to \cite[Section 7]{DN21} for further discussion.

Unlike in \cite{D22}, where the proofs were type-independent, the proofs of \cite[Theorem 5.1, Theorem 6.1]{DN21} rely on the combinatorial model description of the non-crossing partition lattice in each of the settings. Another observation is that while \cite[Theorem 5.1]{DN21} establishes that there is a unique forward orbit of maximum length $h$, in type $D_n$ (see Section~\ref{sec:D}), where $h = 2n-2$, there could be multiple forward orbits of maximum length. For example, in $D_4$ there are 7 forward orbits of maximum length 6. It is therefore of interest to enumerate elements in $D_n$ with maximum possible forward orbit length. 

Defant and Williams \cite{DN21} make conjectures for the number of elements with forward orbits of size $h$ or $h- 1$ when the Coxeter group $W$ is of types $A$ and $B$. They also conjecture a formula for the number of forward orbits of size $h$ when $W$ is of type $D$. In this paper, we prove these conjectures.

\begin{definition}\label{def:aexc-A}
An \emph{antiexceedance} of a permutation $w \in \mathfrak{S}_n$ is an element $i \in [n]$ such that $i < w^{-1}(i)$. 
\end{definition}

\begin{definition}\label{def:a-split}
If $w \in \mathfrak{S}_n$ has exactly $n-2$ antiexceedances and $\cyc(\pi_T(w)) = 1$ then we say that $w$ is \emph{$A$-splittable}.
\end{definition}

\begin{restatable}{thm}{typeA}\label{thm:type-A}
An element $w \in \mathfrak{S}_n$ satisfies $\left| O_{\Popt}(w) \right| = n-1$ if and only if $w$ is $A$-splittable. In particular, there are $2^n - \binom{n}{2} - 1$ permutations with $\Popt$ forward orbit of length $n-1$. 
\end{restatable}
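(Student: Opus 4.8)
The plan is to identify the number of antiexceedances $\aexc(w)$ as the statistic that governs the orbit length. Two boundary facts are immediate: $\aexc(w) = 0$ if and only if $w = e$, and $\aexc(w) \le n-1$ with equality exactly when $w = c^{-1}$ (the maximal case, already settled by the coincidental-type theorem, since $c^{-1}$ is the unique permutation all of whose indices $i<n$ satisfy $w^{-1}(i) > i$). Because $\Popt^{n-1}(w) = e$ for all $w$ and $e$ is a fixed point, the forward orbit terminates at $e$; once I show the orbit is non-repeating, $\left|O_{\Popt}(w)\right| = 1 + \min\{k : \Popt^{k}(w) = e\}$, so it suffices to count the steps needed to reach $e$.

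The key lemma I would prove is the exact antiexceedance-drop formula
\[ \aexc(\Popt(w)) = \aexc(w) - \cyc(\pi_T(w)). \]
To establish it, I would first describe $\pi_T(w)$ concretely: a transposition $(i\,j)$ satisfies $(i\,j) \le_T w$ if and only if $i$ and $j$ lie in a common cycle of $w$, and the join in $NC(\mathfrak{S}_n,c)$ of a family of transpositions is the finest non-crossing set partition coarser than the partition they generate. Hence $\pi_T(w)$ is exactly the non-crossing closure of the cycle partition of $w$, realized as the non-crossing permutation whose blocks are cycled in the order induced by $c$. The drop formula then follows by tracking the antiexceedance set block by block and showing that each non-singleton block of $\pi_T(w)$ removes precisely one antiexceedance upon right multiplication by $\pi_T(w)^{-1}$. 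Since $\cyc(\pi_T(w)) \ge 1$ for every $w \neq e$, the formula forces $\aexc$ to strictly decrease along the orbit; in particular the orbit is non-repeating, $\left|O_{\Popt}(w)\right| \le \aexc(w) + 1 \le n$, and equality $\left|O_{\Popt}(w)\right| = \aexc(w)+1$ holds if and only if \emph{every} step drops $\aexc$ by exactly one, i.e. $\cyc(\pi_T(\Popt^{i}(w))) = 1$ throughout.

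Combining these, orbit length $n-1$ forces $\aexc(w) = n-2$ (the value $n-1$ yields only $c^{-1}$, whose orbit has length $n$) and forces the first drop to be one, so $\cyc(\pi_T(w)) = 1$; thus $w$ is $A$-splittable. For the converse I would give an explicit classification of the permutations with $\aexc(w) = n-2$ and $\cyc(\pi_T(w)) = 1$: the antiexceedance condition means exactly two indices $i \in [n]$ satisfy $w^{-1}(i) \le i$ (necessarily including $i = n$), while the single-block condition says the non-crossing closure of the cycles of $w$ consists of one block together with isolated fixed points. From this explicit form I would show that $\Popt$ carries such a permutation to one whose closure is again a single block with antiexceedance count lowered by exactly one, so that every subsequent drop is also one and the orbit has length $n-1$. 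Verifying that the single-block structure \emph{propagates} under $\Popt$ — rather than being assumed only at the first step — is the main obstacle, and it is precisely where the concrete description of $\pi_T(w)$ from the key lemma does the real work.

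Finally, I would enumerate the $A$-splittable permutations directly from the classification. Parametrizing them by the underlying combinatorial data (the distinguished block together with the admissible cyclic arrangement of $w$ on it, constrained by the single-block closure and by $\aexc(w) = n-2$) yields the count $2^{n} - \binom{n}{2} - 1$, where $2^{n}$ records the admissible choices and the subtracted $\binom{n}{2} + 1$ removes the degenerate configurations that fail the closure or antiexceedance conditions or collapse to the identity. I expect the bookkeeping in this final count, together with the propagation step above, to be the most delicate parts of the argument; everything else is a direct consequence of the drop formula.
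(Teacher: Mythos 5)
Your outline follows the same skeleton as the paper's proof: the drop formula $\aexc(\Popt(w)) = \aexc(w) - \cyc(\pi_T(w))$ together with the bounds $\aexc(w) \le n-1$ (equality only for $c^{-1}$) correctly yields the ``only if'' direction, and that half of your argument is complete. The enumeration sketch is vaguer than the paper's (which counts permutations with $n-2$ antiexceedances as $2^n - n - 1$ via the Foata bijection and then corrects by $-\binom{n}{2}+n$ for the two-cycle projections), but it is plausibly completable along the lines you indicate.

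The genuine gap is in the ``if'' direction, at exactly the step you flag as the main obstacle: you assert that $\Popt$ carries an $A$-splittable permutation to one whose non-crossing closure is again a single non-singleton block, and that this follows from ``the concrete description of $\pi_T(w)$.'' It does not. The single-block property by itself is \emph{not} preserved by $\Popt$: for $w = (1\ 4\ 5\ 8\ 7\ 6\ 9\ 3\ 2) \in \mathfrak{S}_9$ one has $\cyc(\pi_T(w)) = 1$ but $\pi_T(\Popt(w)) = (1\ 2\ 3\ 4)(6\ 7\ 8\ 9)(5)$, so $\cyc(\pi_T(\Popt(w))) = 2$. Hence no argument that uses only the closure of the current iterate can work; one must carry the full antiexceedance structure inherited from $w$ being $A$-splittable through \emph{every} iteration simultaneously. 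The paper does this by first proving an auxiliary monotonicity lemma (if $\alpha$ and $\beta$ are not antiexceedances of $\Popt^i(w)$ and $\gamma > \max\{\alpha,\beta\}$, then $\gamma$ is not an antiexceedance either, valid under the inductive hypothesis that all earlier projections had one block) and then maintaining, by induction on $i$, a disjunction of five geometric ``crossing witness'' conditions for every pair $r > s$ of non-fixed points, verified through a six-case analysis. Your proposal contains no substitute for this machinery, and since this propagation step is the actual content of the theorem, the proof as sketched is incomplete.
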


Recall that the \emph{hyperoctahedral group} $B_n$ is the group of permutations $w$ of $\pm[n]$ such that $w(\bar{i}) = \overline{w(i)}$, where for notational simplicity we denote $-i$ by $\bar{i}$.  Define a total order on $\pm[n]$ by 
\[ \bar{1} \prec \bar{2} \prec \cdots \prec \bar{n} \prec 1 \prec 2 \prec \cdots \prec n. \]
In this setting, we have the following notion of antiexceedance, paralleling the one in $A_n$, where we replace $<$ with $\prec$.

\begin{definition}\label{def:aexc-b}
An \emph{antiexceedance} of a permutation $w \in B_n$ is an element $i \in [n]$ such that $i \prec w^{-1}(i)$. 
\end{definition}

\begin{definition}\label{def:b-split}
If $w \in B_n$ has exactly $2n-2$ antiexceedances and $\cyc(\pi_T(w)) = 1$ then we say that $w$ is \emph{$B$-splittable}.

\end{definition}

\begin{restatable}{thm}{typeB}\label{thm:type-B}
An element $w \in B_n$ satisfies $\left| O_{\Popt}(w) \right| = 2n-1$ if and only if $w$ is $B$-splittable. In particular, there are $2^n - n - 1$ permutations with $\Popt$ forward orbit of length $2n-1$. 
\end{restatable}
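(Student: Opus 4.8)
The plan is to run, inside the signed-permutation model of $B_n$, the same strategy that governs the type-$A$ statement of Theorem~\ref{thm:type-A}, using the freedom (guaranteed by \cite{DN21}) to fix a convenient Coxeter element $c$ realized as a single $2n$-cycle on $\pm[n]$, so that $NC(B_n,c)$ becomes the lattice of centrally symmetric non-crossing partitions of a $2n$-gon. Throughout I treat $\aexc$ as the controlling statistic, recalling from the coincidental-type analysis in \cite{DN21} that $\Popt^{2n-1}(w)=e$ for all $w$, that the orbit of $c^{-1}$ is the unique one of maximal size $2n$, and that the maximum value $2n-1$ of $\aexc$ is attained only at $w=c^{-1}$. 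Since $\Popt(e)=e$ while $\pi_T(w)\neq e$ whenever $w\neq e$, each forward orbit is a strictly shrinking chain terminating at $e$, so $|O_{\Popt}(w)|=1+\#\{\text{steps to reach }e\}$.

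The engine is a single quantitative lemma: for every $w\neq e$,
\[
\aexc(\Popt(w))=\aexc(w)-\cyc(\pi_T(w)).
\]
I would prove this by describing $\pi_T(w)$ explicitly as the centrally symmetric non-crossing closure of the cycle partition of $w$, and then tracking how right multiplication by $\pi_T(w)^{-1}$ reorganizes the antiexceedance structure, so that each nontrivial block of $\pi_T(w)$ contributes exactly one unit to the drop in antiexceedances. Telescoping this identity along an orbit gives $\aexc(w)=\sum_{j}\cyc(\pi_T(\Popt^{j}(w)))$, where the sum runs over the steps until $e$ is reached; since every summand is at least $1$, this yields the uniform bound $|O_{\Popt}(w)|\le \aexc(w)+1$, with equality precisely when $\cyc(\pi_T(\Popt^{j}(w)))=1$ at every step.

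The second structural input is a preservation lemma: if $\cyc(\pi_T(w))=1$ and $\Popt(w)\neq e$, then $\cyc(\pi_T(\Popt(w)))=1$. The point is that a single-block non-crossing closure is rigid under the torsing step, since the crossing chords created by the cycles of $\Popt(w)$ are forced by the non-crossing join to coalesce into one block, exactly as one already sees for the powers $c^{-k}$. Granting these two lemmas, call $w$ \emph{good} if $\cyc(\pi_T(w))=1$; preservation shows that a good element has all iterates good, whence by the engine $|O_{\Popt}(w)|=\aexc(w)+1$ exactly. The characterization then drops out: if $|O_{\Popt}(w)|=2n-1$ then $\aexc(w)\ge 2n-2$, and $\aexc(w)=2n-1$ is impossible since it forces $w=c^{-1}$ (orbit $2n$); hence $\aexc(w)=2n-2$, and $|O_{\Popt}(w)|=\aexc(w)+1$ forces every step, in particular the first, to have $\cyc=1$, so $w$ is good, i.e.\ $B$-splittable. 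Conversely a $B$-splittable $w$ is good with $\aexc(w)=2n-2$, so $|O_{\Popt}(w)|=2n-1$.

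It remains to count the good elements with $\aexc(w)=2n-2$, which is the classification promised in the abstract. Here I would extract from the single-block condition, together with the near-maximal antiexceedance count, an explicit normal form for $B$-splittable permutations and read off the enumeration. The target $2^n-n-1=\sum_{k=2}^{n}\binom{n}{k}$ is the number of subsets of $[n]$ of size at least two, which strongly suggests — and I would aim to exhibit — a bijection sending each $B$-splittable element to the support in $[n]$ of the unique nontrivial centrally symmetric block of $\pi_T(w)$. I expect the main obstacle to lie in the two structural lemmas, and specifically in the preservation lemma: the type-$B$ lattice has the extra subtlety of a distinguished ``zero'' block through the center, and one must check that the torsing step never splits the single block into two centrally symmetric pieces. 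Controlling this sign/central-symmetry phenomenon — absent in type $A$ — is the technical heart of the argument, and is also what makes the type-$B$ count $2^n-n-1$ differ in shape from the type-$A$ count $2^n-\binom{n}{2}-1$.
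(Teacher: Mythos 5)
Your overall architecture matches the paper's: the ``only if'' direction via the identity $\aexc(\Popt(w))=\aexc(w)-\cyc(\pi_T(w))$ (which is \cite[Theorem 5.6]{DN21}, so you need not reprove it), the telescoping bound $|O_{\Popt}(w)|\le\aexc(w)+1$ with equality forcing $\cyc(\pi_T(\Popt^j(w)))=1$ at every step, and the reduction of the ``if'' direction to showing that this single-cycle condition propagates along the orbit. However, your second structural input --- the ``preservation lemma'' asserting that $\cyc(\pi_T(w))=1$ and $\Popt(w)\neq e$ imply $\cyc(\pi_T(\Popt(w)))=1$ --- is false, and this is precisely the trap the paper warns about. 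In type $A$ the permutation $w=(1\enspace 4\enspace 5\enspace 8\enspace 7\enspace 6\enspace 9\enspace 3\enspace 2)\in\mathfrak{S}_9$ has $\pi_T(w)=(1\enspace 2\enspace\cdots\enspace 9)$, so $\cyc(\pi_T(w))=1$, yet $\Popt(w)=(1\enspace 3)(2\enspace 4)(6\enspace 8)(7\enspace 9)$ has $\pi_T(\Popt(w))=(1\enspace 2\enspace 3\enspace 4)(6\enspace 7\enspace 8\enspace 9)$ with two non-singleton cycles; the ``rigidity'' you invoke simply does not hold, and the type-$B$ model (type $A_{2n-1}$ with central symmetry) inherits the same phenomenon. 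The correct statement requires the full splittability hypothesis --- the near-maximal antiexceedance count must be carried along the orbit --- and even then the proof is not a one-line coalescence argument: it is the lengthy induction occupying most of Section 2 of the paper, which tracks five geometric conditions on pairs $(r,s)$ of non-fixed points certifying that they lie in a common block of the non-crossing closure, together with Lemma~\ref{claim:aexc-split} controlling which elements can fail to be antiexceedances. This inductive case analysis is the technical heart of the theorem and is entirely missing from your proposal.

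Two smaller points. First, your enumeration is only a sketch: you propose a bijection from $B$-splittable elements to subsets of $[n]$ of size at least two but do not exhibit it. The paper instead derives the explicit normal form of elements with $2n-2$ antiexceedances (at most two cycles, each containing a pair $\{i,\bar i\}$ when there are two), maps them via single-line notation to permutations of $\mathfrak{S}_n$ with exactly $n-2$ antiexceedances (counted by $2^n-n-1$ via the Foata bijection and the one-ascent count), and checks that the single-non-antiexceedance case cannot occur for $B$-splittable $w$ while the two-non-antiexceedance case is a bijection; some comparable verification would be needed to make your count rigorous. Second, your Coxeter element realized as a single $2n$-cycle on $\pm[n]$ is fine and agrees with the paper's choice, but note that the extra care in type $B$ (the central symmetry and the possible ``zero block'') enters the counting argument, not only the preservation step as you suggest.
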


Not only do we enumerate the permuations with the given forward orbit length, we also give complete characterizations of these permutations. The formula given in \cite[Conjecture 6.6]{DN21} is unfortunately incorrect; while Defant and Williams conjecture that the number of elements $w \in D_n$ with $|O_{\Popt}(w)| = 2n-2$ is $n(2^{n-1}-2)+1$, we show here that the actual value is $ (n-1)(2^{n-2} - 2)+1$. In the following theorem, we also characterize the permutations exhibiting this property, but defer further description to Section 4.

\begin{restatable}{thm}{typeD}\label{thm:type-D}
Fix the Coxeter element $c = (\overline{1} \enspace \overline{2} \cdots \overline{n-1} \enspace 1 \enspace 2 \enspace \cdots \enspace  n-1)(\overline{n} \enspace n)$. An element $w \in D_n$ satisfies $\left| O_{\Popt}(w) \right| = 2n-2$ if and only if either $w$ is a pre-$D$-splittable permutation or $w = c^{-1}$. In particular, the number of elements of $D_n$ that require exactly $2n-3$ iterations of $\Popt$ to reach the identity is $(n-1)(2^{n-2} - 2)+1$. 
\end{restatable}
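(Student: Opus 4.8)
The plan is to work in the combinatorial model of $D_n$ as signed permutations with an even number of sign changes, using the prescribed Coxeter element $c$ so that its cycle structure is fully explicit. The overall strategy mirrors the analysis that must underlie \cite[Theorem 6.1]{DN21}: for each $w$, understand $\pi_T(w,c)$ as the join in $NC(D_n,c)$ of all reflections below $w$, and track how the length parameter $\ell_T$ decreases under iteration of $\Popt$. Since $\Popt(w) = w\cdot\pi_T(w,c)^{-1}$ and the orbit terminates at $e$, the orbit length is governed by how slowly $\ell_T\bigl(\Popt^k(w)\bigr)$ descends; reaching the extremal value $2n-2$ (equivalently requiring $2n-3$ iterations to hit $e$) forces $\ell_T$ to drop by exactly $1$ at nearly every step. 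First I would establish a sharp \emph{one-step length bound}: for generic $w$ the quantity $\ell_T(w) - \ell_T(\Popt(w))$ equals the number of parts (nontrivial blocks) that $\pi_T(w,c)$ contributes, and I would characterize precisely when this drop is forced to be $1$. This is the engine that converts the global orbit-length condition into a purely local, step-by-step structural constraint on $w$.

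Next I would introduce the notion of a \emph{pre-$D$-splittable} permutation (to be defined precisely in Section~4) as the type-$D$ analogue of $A$- and $B$-splittability, namely the condition that $w$ has the maximal admissible antiexceedance count compatible with $\cyc(\pi_T(w,c))=1$, together with whatever parity/sign condition is imposed by membership in $D_n$ rather than $B_n$. The forward direction then runs as follows: if $|O_{\Popt}(w)| = 2n-2$, I would show by induction on the orbit that each intermediate image $\Popt^k(w)$ must itself have a single-block non-crossing projection with a cycle joining to the maximal chain, so that the length decreases by exactly one at each of the first $2n-3$ steps; peeling back from $e$ this pins down $w$ to be either pre-$D$-splittable or the exceptional element $c^{-1}$, which plays the role that $c^{-1}$ plays in the coincidental-type theorem as the unique source of the longest orbit. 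For the reverse direction I would verify directly that pre-$D$-splittable permutations (and $c^{-1}$) do achieve the full length $2n-2$, by exhibiting that $\pi_T$ stays a single connected block throughout the orbit.

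The enumeration is then a counting problem: I would count pre-$D$-splittable permutations by fixing the combinatorial data that determines $\pi_T(w,c)$ to have $\cyc = 1$ and the antiexceedance count to be maximal, reducing to a choice of a subset governing the sign pattern (the source of the $2^{n-2}$), a choice among $n-1$ positions or cycle types (the source of the $(n-1)$ factor), minus the degenerate configurations that fail the $D_n$ parity constraint (the source of the $-2$ correction), plus the single exceptional orbit of $c^{-1}$ (the $+1$). Assembling these gives $(n-1)(2^{n-2}-2)+1$, and along the way explains precisely where the erroneous conjectural count $n(2^{n-1}-2)+1$ of \cite[Conjecture 6.6]{DN21} overcounts --- presumably by conflating the type-$B$ antiexceedance bookkeeping with the type-$D$ one and by failing to excise the two degenerate sign classes.

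The main obstacle I expect is the forward direction, specifically proving that the length must drop by exactly $1$ at \emph{every} intermediate step rather than only controlling the first and last steps. The difficulty is that $\pi_T(w,c)$ is a join in $NC(D_n,c)$, and joins in the type-$D$ non-crossing partition lattice interact subtly with the central block and the two sign classes, so the naive claim ``the drop equals the number of blocks'' can fail when the non-crossing partition touches the distinguished central pair $\{\overline{n},n\}$ of $c$. Handling this central block correctly --- and in particular isolating the exceptional role of $c^{-1}$ as the one element whose orbit survives the longest despite not being pre-$D$-splittable --- is where the argument is most delicate, and is exactly the point at which the type-$D$ analysis genuinely diverges from the cleaner types $A$ and $B$.
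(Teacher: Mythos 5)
Your proposal has a concrete technical error at its core and is missing the key structural device of the paper's argument. The potential function you propose to track, $\ell_T\bigl(\Popt^k(w)\bigr)$, cannot do the job: absolute reflection length in $D_n$ is bounded by the rank $n$, so it cannot decrease by $1$ at each of $2n-3$ consecutive steps. The statistic that actually governs orbit length (here and in types $A$ and $B$) is the \emph{antiexceedance} count, which satisfies $\aexc(\Popt(w)) = \aexc(w) - \cyc(\pi_T(w))$ and can be as large as roughly $2n$; your ``one-step length bound'' should be formulated for $\aexc$, not $\ell_T$. Relatedly, your remark that $c^{-1}$ is ``the unique source of the longest orbit'' contradicts the statement you are proving: the whole point of the type-$D$ theorem is that there are $(n-1)(2^{n-2}-2)+1$ elements achieving the maximal length $2n-2$, not one.

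The second, larger gap is that you have no mechanism for handling the central pair $\{n,\overline{n}\}$ beyond acknowledging it is delicate. The paper's proof works by restricting to \emph{zeroed} elements (those whose projection has a zero block; all others have orbit length at most $2n-3$ by a separate argument), associating to each such $w$ an $(n-1)$-projection $w|_{n-1} \in B_{n-1}$, and proving that $\Popt$ on $D_n$ agrees with a suitable $n$-extension of $\Popt^B$ on $B_{n-1}$ at every step \emph{except} at certain ``bad'' steps (called double-cysted and pierced). Double-cysted steps strictly shorten the orbit; a pierced step occurs at most once per orbit and lengthens it by exactly one. This is precisely what produces $|O_{\Popt}(w)| = |O_{\Popt^B}(w|_{n-1})| + 1 = (2n-3)+1 = 2n-2$ and explains why the condition in the definition of pre-$D$-splittability is that $w|_{n-1}$ has $2n-4$ antiexceedances (i.e., is $B$-splittable in $B_{n-1}$). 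Your sketch of ``peeling back from $e$'' and your enumeration, which reverse-engineers the factors of the final formula rather than counting a characterized set, would both need this reduction to be made precise before they could be carried out.
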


\medskip
\noindent\textbf{Outline.} In Section 2 we study type A Coxeter groups and in Section 3 we handle type B Coxeter groups. The argument for type B Coxeter groups is near verbatim that for type A. As noted in \cite{DN21}, type $A_{2n-1}$ folds to $B_n$ (refer to \cite[Section 4.3]{DN21} and references therein for more information on folding). It is interesting to ask if there is a direct way to do enumeration via folding, though in our case we choose to do the counting directly. In Section 4 we handle the case of type D Coxeter groups, which is more complex than types A and B. We end off with some avenues for further research in Section 5, drawing inspiration from analogous questions in the setting of $\Pops$.

\section{Type A}

The Coxeter group of type $A_{n-1}$ is effectively the symmetric group $\mathfrak{S}_n$. We fix the Coxeter element $c = (1 \enspace 2 \cdots n)$ in this section. 

Now, we briefly describe the combinatorial model for the noncrossing partition lattice $NC(\mathfrak{S}_n, c)$. Label $n$ equidistant points on a circle by $1, 2, \ldots, n$ in clockwise order. 

\begin{definition}
A set partition $\pi$ of $[n]$ is a \emph{noncrossing set partition} if the convex hulls of the different blocks of $\pi$ do not have a common interior.
\end{definition}

Given a noncrossing set partition $\pi$, we can recover a noncrossing partition in $NC(\mathfrak{S}_n, c)$ as follows: for each block of $\pi$, form a cycle by ordering the elements of the block in a clockwise order around the circle. In fact, this procedure we have described gives a bijection; namely, every noncrossing partition in $NC(\mathfrak{S}_n, c)$ arises in such a fashion. The partial order on $NC(\mathfrak{S}_n,c)$ corresponds to the reverse refinement order on noncrossing set partitions. In particular, the noncrossing projection of a permutation $w \in \mathfrak{S}_n$ is the smallest noncrossing partition $v \in NC(\mathfrak{S}_n, c)$ such that every cycle in $w$ is contained (as a set) in a cycle of $v$. See Figure~\ref{fig:typeA-waexc} for an example. 

\begin{figure}[!htp]
    \centering
    \begin{tikzpicture}
\begin{scope}
\equic[2 cm]{8};
\draw [-{Stealth[length=2mm, width=1mm]}, line width = 0.7pt] (N1) -- (N6);
\draw [-{Stealth[length=2mm, width=1mm]}, line width = 0.7pt] (N6) -- (N5);
\draw [-{Stealth[length=2mm, width=1mm]}, line width = 0.7pt] (N5) -- (N3);
\draw [-{Stealth[length=2mm, width=1mm]}, line width = 0.7pt] (N3) -- (N1);

\draw [-{Stealth[length=2mm, width=1mm]}, line width = 0.7pt] (N8) -- (N7);
\draw [-{Stealth[length=2mm, width=1mm]}, line width = 0.7pt] (N7) -- (N4);
\draw [-{Stealth[length=2mm, width=1mm]}, line width = 0.7pt] (N4) -- (N2);
\draw [-{Stealth[length=2mm, width=1mm]}, line width = 0.7pt] (N2) -- (N8);

\foreach \i in {1,2,3,4,5,7}{
    \fill[red](N\i) circle (0.07 cm);
}

\end{scope}

\begin{scope}[shift={(6,0)}]

\equic[2 cm]{8};
\foreach \i in {1,...,7}{
    \pgfmathparse{\i+1}
    \edef\j{\pgfmathresult}
    \draw [-{Stealth[length=2mm, width=1mm]}, line width = 0.7pt] (N\i) -- (N\j);
}
\draw [-{Stealth[length=2mm, width=1mm]}, line width = 0.7pt] (N8) -- (N1);

\end{scope}

\begin{scope}[shift={(12,0)}]

\equic[2 cm]{8};
\draw [-{Stealth[length=2mm, width=1mm]}, line width = 0.7pt] (N1) -- (N7);
\draw [-{Stealth[length=2mm, width=1mm]}, line width = 0.7pt] (N7) -- (N5);
\draw [-{Stealth[length=2mm, width=1mm]}, line width = 0.7pt] (N5) -- (N2);
\draw [-{Stealth[length=2mm, width=1mm]}, line width = 0.7pt] (N2) -- (N6);
\draw [-{Stealth[length=2mm, width=1mm]}, line width = 0.7pt] (N6) -- (N3);
\draw [-{Stealth[length=2mm, width=1mm]}, line width = 0.7pt] (N3) -- (N8);
\draw [-{Stealth[length=2mm, width=1mm]}, line width = 0.7pt] (N8) -- (N4);
\draw [-{Stealth[length=2mm, width=1mm]}, line width = 0.7pt] (N4) -- (N1);

\foreach \i in {1,2,3,4,5}{
    \fill[red](N\i) circle (0.07 cm);
}
\end{scope}
\end{tikzpicture} 
    \caption{The permutation $w:= (8 \enspace 7 \enspace 4 \enspace 2)(6 \enspace 5 \enspace 3 \enspace 1) \in \mathfrak{S}_8$ (left), its non-crossing projection $\pi_T(w) = (1 \enspace 2 \enspace 3 \enspace 4 \enspace 5 \enspace 6 \enspace 7 \enspace 8) \in \mathfrak{S}_8$ (middle), and its image under $\Popt$ given by $\Popt(w) = (1 \enspace 7 \enspace 5 \enspace 2 \enspace 6 \enspace 3 \enspace 8 \enspace 4) \in \mathfrak{S}_8$ (right)}
    \label{fig:typeA-waexc}
\end{figure}

Next, we recall some results from \cite{DN21}. In the following, fix the Coxeter element $c = (1 \enspace 2 \enspace \cdots \enspace n)$ of $A_{n-1}$. We will work with $\Popt$ defined using this $c$; this is without loss of generality since the orbit structure $\Popt$ is independent of the choice of $c$ \cite[Corollary 4.5]{DN21}.

Following \cite{DN21}, we write $\aexc(w)$ for the number of antiexceedances (as per Definition~\ref{def:aexc-A}) in the permutation $w$ and we write $\Aexc(w)$ for the set of indices that are antiexceedances. Let $\cyc(w) \in \mathbb{N}$ denote the number of non-singleton cycles in a permutation $w \in \mathfrak{S}_n$. In this notation, we have the following result from \cite{DN21}. An illustration of this result is in Figure~\ref{fig:typeA-waexc}, where the antiexceedances are the vertices marked in red. 

\begin{lemma}[{\cite[Theorem 5.2]{DN21}}] \label{lem:DN-A}
The following properties relating antiexceedances and $\Popt$ hold:
\begin{enumerate}
    \item[(a)] The element $c^{-1}$ has $n-1$ antiexceedances. 
    \item[(b)] Every element of $\mathfrak{S}_n$ other than $c^{-1}$ has at most $n-2$ antiexceedances.
    \item[(c)] For every $w \in \mathfrak{S}_n$, we have $\aexc(\Popt(w)) = \aexc(w) - \cyc(\pi_T(w))$. 
\end{enumerate}
\end{lemma}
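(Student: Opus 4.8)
The plan is to treat the three parts separately, handling (a) and (b) by direct arguments about the cycle $c = (1 \enspace 2 \cdots n)$ and reserving the real work for the dynamical identity (c). Throughout I will use the reformulation $\aexc(u) = \#\{j : u(j) < j\}$, valid because $i \mapsto u^{-1}(i)$ is a bijection between antiexceedances $i$ (with $i < u^{-1}(i)$) and positions $j$ with $u(j) < j$. For (a), I would simply unwind definitions: since $(c^{-1})^{-1} = c$ with $c(i) = i+1$ for $i \le n-1$ and $c(n) = 1$, the index $i$ is an antiexceedance of $c^{-1}$ exactly when $i < c(i)$, which holds for $i \in \{1, \dots, n-1\}$ and fails for $i = n$, giving $\aexc(c^{-1}) = n-1$. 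For (b), the key observation is that $i = n$ can never be an antiexceedance of any $w$, so $\aexc(w) \le n-1$ always; I would then show equality forces $w = c^{-1}$ by a descending induction: if $w^{-1}(i) > i$ for every $i \in \{1, \dots, n-1\}$, then $w^{-1}(n-1) = n$, and having fixed $w^{-1}(n-1), \dots, w^{-1}(i+1)$ (the values $n, n-1, \dots, i+2$), the constraint $w^{-1}(i) > i$ with distinctness forces $w^{-1}(i) = i+1$; this yields $w^{-1} = c$, i.e.\ $w = c^{-1}$.

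The heart of the matter is (c). Writing $v = \pi_T(w)$, the first step is to record that $\Popt(w) = w v^{-1}$ preserves every block $B$ of the noncrossing partition $v$: both $v^{-1}$ and $w$ send $B$ to itself (the former because $v$ acts on $B$ as a single cycle, the latter because each cycle of $w$ lies inside a block of $\pi_T(w)$). Since $\aexc$ is additive over an invariant set partition, it suffices to analyze each block in isolation. On a block $B = \{a_1 < \cdots < a_k\}$ the partition $v$ acts as the increasing cycle $(a_1 \enspace a_2 \cdots a_k)$; relabeling $B$ in an order-preserving way as $[k]$ turns $v|_B$ into the standard cycle $(1 \enspace 2 \cdots k)$ and $w|_B$ into a permutation $\sigma$ of $[k]$, with $\Popt(w)|_B$ becoming $\sigma v^{-1}$.

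The crux is then a purely local identity: for any permutation $\sigma$ of $[k]$ with $v = (1 \enspace 2 \cdots k)$,
\[
\#\{j : \sigma(j) < j\} - \#\{j : (\sigma v^{-1})(j) < j\} = 1 - |\mathrm{Fix}(\sigma)|.
\]
I would prove this by the bookkeeping $\sigma v^{-1}(j) = \sigma(j-1)$ for $j \ge 2$ and $\sigma v^{-1}(1) = \sigma(k)$: the deficiencies of $\sigma v^{-1}$ are indexed by the $i \in [k-1]$ with $\sigma(i) \le i$, which splits as the deficiencies of $\sigma$ in $[k-1]$ together with the fixed points of $\sigma$ in $[k-1]$; comparing against the deficiencies of $\sigma$ (all lying in $\{2, \dots, k\}$) and using that $k$ is either a deficiency or a fixed point of $\sigma$ collapses everything to $1 - |\mathrm{Fix}(\sigma)|$. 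To finish, I would observe that the fixed points of $w$ are exactly the singleton blocks of $\pi_T(w)$: since a singleton never shares interior with any convex hull, it is never merged under the noncrossing closure, so singleton cycles survive as singleton blocks and non-singleton blocks are precisely the fixed-point-free ones. Summing $1 - |\mathrm{Fix}(\sigma_B)|$ over all blocks $B$ then counts exactly the non-singleton blocks of $v$, namely $\cyc(\pi_T(w))$, which yields $\aexc(w) - \aexc(\Popt(w)) = \cyc(\pi_T(w))$.

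The main obstacle I anticipate is pinning down the local identity and the singleton/fixed-point correspondence with the correct signs, since it is precisely the contribution of $+1$ per non-singleton block (cancelled against the fixed points) that makes the sum telescope to $\cyc(\pi_T(w))$; by contrast, the block-invariance reduction and the computations for (a) and (b) are routine once the setup is in place.
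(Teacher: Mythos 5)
Your proof is correct, but note that the paper itself does not prove this lemma at all: it is imported verbatim from Defant--Williams \cite[Theorem 5.2]{DN21}, so the comparison is with their argument rather than with anything in this paper. Your parts (a) and (b) are routine and fine (the observation that $n$ is never an antiexceedance, plus the descending induction forcing $w^{-1}=c$, is exactly right). For (c), your route is a genuinely different and somewhat slicker aggregate count: block-invariance of $\Popt(w)=wv^{-1}$ under the blocks of $v=\pi_T(w)$, order-preserving relabeling of each block, the local identity $\#\{j:\sigma(j)<j\}-\#\{j:\sigma v^{-1}(j)<j\}=1-|\mathrm{Fix}(\sigma)|$ (which checks out: the deficiencies of $\sigma v^{-1}$ are the $i\in[k-1]$ with $\sigma(i)\le i$, and the boundary term $[\sigma(k)<k]+[\sigma(k)=k]=1$ collapses the difference), and the fixed-point/singleton-block correspondence, which is soundly justified by minimality of the noncrossing projection --- splitting a fixed point off its block preserves both noncrossingness and the containment of all cycles, so fixed points are exactly the singleton blocks, making every non-singleton block fixed-point-free and the sum telescope to $\cyc(\pi_T(w))$. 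What the Defant--Williams argument buys that yours does not is a \emph{localization}: they identify precisely which antiexceedance each non-singleton cycle of $\pi_T(w)$ destroys, namely that $r\in\Aexc(u)\setminus\Aexc(\Popt(u))$ if and only if $u^{-1}(r)$ is the largest entry in a non-singleton cycle of $\pi_T(u)$. Your identity yields only the net count $\aexc(w)-\aexc(\Popt(w))=\cyc(\pi_T(w))$, which suffices for the lemma as stated; but be aware that the present paper later relies on the refined statement (it is the fact extracted at the start of the proof of Lemma~\ref{claim:aexc-split}), so your version, while a valid proof of Lemma~\ref{lem:DN-A}, could not substitute for the citation everywhere it is used downstream. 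If you wanted your argument to recover the refinement, you would track within each block that the unique index $i$ contributing $\sigma(i)\le i$ without being a deficiency of $\sigma$ corresponds, after unrelabeling, exactly to the largest entry of that cycle.
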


Since the only element of $\mathfrak{S}_n$ without any cycles of length greater than 1 is $e$, for any $w \in \mathfrak{S}_n \backslash \{ e \}$ we have that $\aexc(\Popt(w)) \leq \aexc(w) - 1 $. In order for $w$ to take $n-2$ iterations to reach $e$, by \cite[Theorem 5.1]{DN21} we must first have that $w \neq c^{-1}$. Then, by Lemma~\ref{lem:DN-A}(b) and the earlier observation in this paragraph, it follows that $w$ must have exactly $n-2$ antiexceedances and $\pi_T(w)$ needs to consist of exactly one cycle with length greater than 1. Recall that such elements $w\in A_n$ are $A$-splittable (Definition~\ref{def:a-split}). An example of an $A$-splittable permutation is the $w$ illustrated in Figure~\ref{fig:typeA-waexc}. We have seen that for an element $w \in \mathfrak{S}_n$ to have a $\Popt$ forward orbit of length $n-1$, $w$ has to be splittable. In fact, we will prove that this splittable condition is sufficient. We recall Theorem~\ref{thm:type-A}.

\typeA*

Combining Theorem~\ref{thm:type-A} with \cite[Theorem 5.1]{DN21}, we prove \cite[Conjecture 5.5]{DN21}. 

\begin{corollary}[{\cite[Conjecture 5.5]{DN21}}]\label{cor:typeA}
In type $A_{n-1}$, the number of permutations that require exactly $n-2$ or $n-1$ iterations
of $\Popt$ to reach the identity is $2^n - \binom{n}{2}$. 
\end{corollary}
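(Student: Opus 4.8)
The plan is to obtain the corollary as an immediate bookkeeping consequence of Theorem~\ref{thm:type-A} together with \cite[Theorem 5.1]{DN21}, once we pin down the dictionary between forward-orbit length and the number of iterations of $\Popt$ needed to reach the identity. Recall that for $A_{n-1}$ the Coxeter number is $h = n$, so by \cite[Theorem 5.1]{DN21} every element reaches $e$ within $n-1$ iterations.

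First I would record that $\aexc$ strictly decreases along a nontrivial orbit. By Lemma~\ref{lem:DN-A}(c), $\aexc(\Popt(w)) = \aexc(w) - \cyc(\pi_T(w))$, and for any $w \neq e$ the projection $\pi_T(w)$ has at least one nontrivial cycle, so $\cyc(\pi_T(w)) \ge 1$ and hence $\aexc(\Popt(w)) < \aexc(w)$. Since $\aexc(e) = 0$ and $\Popt(e) = e$, it follows that the iterates $w, \Popt(w), \Popt^2(w), \ldots$ are pairwise distinct up until the first time the identity is reached, after which the sequence is constant. Consequently, if $k$ is the least integer with $\Popt^{k}(w) = e$, then $O_{\Popt}(w) = \{w, \Popt(w), \ldots, \Popt^{k}(w)\}$ consists of exactly $k+1$ distinct elements, i.e.\ $\left|O_{\Popt}(w)\right| = k+1$. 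Thus the condition ``$w$ requires exactly $k$ iterations to reach the identity'' is identical to ``$\left|O_{\Popt}(w)\right| = k+1$.''

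With this dictionary, the permutations requiring exactly $n-1$ or $n-2$ iterations are precisely those with $\left|O_{\Popt}(w)\right| = n$ or $\left|O_{\Popt}(w)\right| = n-1$, respectively, and these two classes are disjoint. By \cite[Theorem 5.1]{DN21} (applied with $h = n$), the unique element attaining the maximal orbit length $\left|O_{\Popt}(w)\right| = n$ is $w = c^{-1}$, contributing a single permutation. By Theorem~\ref{thm:type-A}, the elements with $\left|O_{\Popt}(w)\right| = n-1$ are exactly the $A$-splittable permutations, of which there are $2^n - \binom{n}{2} - 1$. Since $c^{-1}$ has orbit length $n \neq n-1$, it is not $A$-splittable, so there is no double counting. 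Adding the two disjoint contributions then yields $1 + \left(2^n - \binom{n}{2} - 1\right) = 2^n - \binom{n}{2}$, as claimed.

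I expect no serious obstacle here: the corollary is essentially a numerical synthesis of the two main theorems. The only point requiring genuine care is the verification that orbit length equals one plus the number of iterations to reach $e$ — equivalently, that no element recurs before the identity is reached — which the strict monotonicity of $\aexc$ established above supplies cleanly.
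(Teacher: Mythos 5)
Your proposal is correct and matches the paper's (implicit) argument exactly: the paper derives the corollary by combining Theorem~\ref{thm:type-A} (giving $2^n - \binom{n}{2} - 1$ elements with orbit length $n-1$) with the uniqueness of the orbit of length $h=n$ from \cite[Theorem 5.1]{DN21}, contributing the single element $c^{-1}$. Your extra verification that orbit size equals one plus the number of iterations needed to reach $e$, via the strict decrease of $\aexc$ from Lemma~\ref{lem:DN-A}(c), is a correct formalization of a point the paper leaves implicit.
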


The proof of Theorem~\ref{thm:type-A} is split into two parts. In the first, we will prove the claim that $A$-splittable permutations are precisely the elements with $\Popt$ forward orbit of length $n-1$. In the second, we will enumerate the $A$-splittable permutations.

One may be tempted to claim that if $\cyc(\pi_T(w)) = 1$, then $\cyc(\pi_T(\Popt(w)))=1$. Such a claim, if true, would immediately give Theorem~\ref{thm:type-A} upon iteration. Alas, this is too na\"ive a hope. For instance, one can check that $w = (1\enspace 4 \enspace 5 \enspace 8 \enspace 7 \enspace 6 \enspace 9 \enspace 3 \enspace 2)$ is a counterexample to the aforementioned claim; we have that $\pi_T(w) = (1 \enspace 2 \enspace 3 \enspace 4 \enspace 5 \enspace 6 \enspace 7 \enspace 8 \enspace 9)$ and so $\cyc(\pi_T(w)) = 1$, while $\Popt(w) = (1 \enspace 3) (2 \enspace 4)(6 \enspace 8)(7 \enspace 9)(5)$ and $\pi_T(\Popt(w)) = (1 \enspace 2 \enspace 3 \enspace 4)(6 \enspace 7 \enspace 8 \enspace 9)(5)$ and so $\cyc(\pi_T(\Popt(w))) = 2$. A pictorial illustration of this phenomenon is in Figure~\ref{fig:A-cex}. This suggests that we need to account for the structure inherent in $w$ from it being $A$-splittable and therefore having $n-2$ antiexceedances. For instance, in the earlier example it can be checked that $w$ is not $A$-splittable and also does not lie in the orbit of a $A$-splittable permutation. The latter can for instance be checked by computing that there does not exist $u$ with $\cyc(\pi_T(u)) = 1$ such that $\Popt(u) = w$. 

\begin{figure}
    \centering
    \begin{tikzpicture}
\begin{scope}
\equic[1.5cm]{9};
\draw [-{Stealth[length=2mm, width=1mm]}, line width = 0.7pt] (N1) -- (N4);
\draw [-{Stealth[length=2mm, width=1mm]}, line width = 0.7pt] (N4) -- (N5);
\draw [-{Stealth[length=2mm, width=1mm]}, line width = 0.7pt] (N5) -- (N8);
\draw [-{Stealth[length=2mm, width=1mm]}, line width = 0.7pt] (N8) -- (N7);
\draw [-{Stealth[length=2mm, width=1mm]}, line width = 0.7pt] (N7) -- (N6);
\draw [-{Stealth[length=2mm, width=1mm]}, line width = 0.7pt] (N6) -- (N9);
\draw [-{Stealth[length=2mm, width=1mm]}, line width = 0.7pt] (N9) -- (N3);
\draw [-{Stealth[length=2mm, width=1mm]}, line width = 0.7pt] (N3) -- (N2);
\draw [-{Stealth[length=2mm, width=1mm]}, line width = 0.7pt] (N2) -- (N1);

\end{scope}

\begin{scope}[shift = {(5,0)}]
\equic[1.5cm]{9};
\draw [{Stealth[length=2mm, width=1mm]}-{Stealth[length=2mm, width=1mm]}, line width = 0.7pt] (N1) -- (N3);
\draw [{Stealth[length=2mm, width=1mm]}-{Stealth[length=2mm, width=1mm]}, line width = 0.7pt] (N2) -- (N4);
\draw [{Stealth[length=2mm, width=1mm]}-{Stealth[length=2mm, width=1mm]}, line width = 0.7pt] (N6) -- (N8);
\draw [{Stealth[length=2mm, width=1mm]}-{Stealth[length=2mm, width=1mm]}, line width = 0.7pt] (N7) -- (N9);
\path (N5) edge [loop right,line width = 0.7pt] (N5);
\end{scope}

\begin{scope}[shift = {(10, 0)}]
\equic[1.5cm]{9};
\draw [-{Stealth[length=2mm, width=1mm]}, line width = 0.7pt] (N8) -- (N9);
\draw [-{Stealth[length=2mm, width=1mm]}, line width = 0.7pt] (N9) -- (N6);
\draw [-{Stealth[length=2mm, width=1mm]}, line width = 0.7pt] (N6) -- (N7);
\draw [-{Stealth[length=2mm, width=1mm]}, line width = 0.7pt] (N7) -- (N8);

\draw [-{Stealth[length=2mm, width=1mm]}, line width = 0.7pt] (N1) -- (N2);
\draw [-{Stealth[length=2mm, width=1mm]}, line width = 0.7pt] (N2) -- (N3);
\draw [-{Stealth[length=2mm, width=1mm]}, line width = 0.7pt] (N3) -- (N4);
\draw [-{Stealth[length=2mm, width=1mm]}, line width = 0.7pt] (N4) -- (N1);

\path (N5) edge [loop right,line width = 0.7pt] (N5);
\end{scope}

\end{tikzpicture}
    \caption{The permutation $w = (1\enspace 4 \enspace 5 \enspace 8 \enspace 7 \enspace 6 \enspace 9 \enspace 3 \enspace 2) \in \mathfrak{S}_9$ (left) which clearly has non-crossing projection $(1 \enspace 2 \enspace 3 \enspace 4 \enspace 5 \enspace 6 \enspace 7 \enspace 8 \enspace 9)$, its image under $\Popt$ given by $\Popt(w)= (1 \enspace 3) (2 \enspace 4)(6 \enspace 8)(7 \enspace 9)(5)$ (middle) and $\pi_T(\Popt(w)) = (1 \enspace 2 \enspace 3 \enspace 4)(6 \enspace 7 \enspace 8 \enspace 9)(5)$ (right)}
    \label{fig:A-cex}
\end{figure}

Before we proceed, let us make the following observation about $A$-splittable $w$. Firstly, $w$ has at most two cycles. If $w$ has two cycles, then it is of the form 
\[ ( n \enspace c_1 \enspace \cdots \enspace c_{\ell})(d_1 \enspace d_2 \enspace \cdots \enspace d_{n-\ell-1})\]
where $n > c_1 > \cdots > c_{\ell}$, $d_1 > c_{\ell}$ and $d_1 > d_2 > \cdots > d_{n - \ell - 1}$. This is because when $w$ has two cycles then the largest element of each of the cycles is not an antiexceedance, and so every other element has to be an antiexceedance. In particular, once we decide the two sets of elements that lie in each cycle, then each of the elements must be arranged in a decreasing order on each cycle. If $w$ has exactly one cycle with every element being an antiexceedance except for $j$ and $n$, then $w$ is of the form 
\[ (n \enspace n-1 \enspace \cdots \enspace j+1 \enspace c_1 \enspace \cdots \enspace c_{\ell} \enspace j \enspace d_1 \enspace \cdots \enspace d_{n-j-\ell - 1})\]
where $n>j > c_1 > c_2 > \cdots > c_{\ell}$ and $d_1 > d_2 > \cdots > d_{n - j - \ell - 1}$. This is because besides $n$ and $j$, every other element must be arranged in a decreasing order on the cycle. 

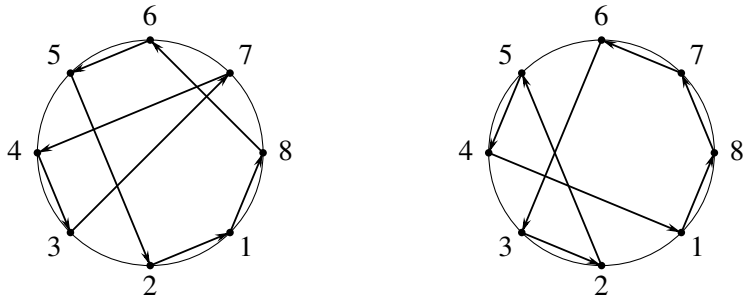
\begin{figure}[!htp]
    \centering
    \begin{tikzpicture}
\begin{scope}
\equic[1.5cm]{8};
\draw [-{Stealth[length=2mm, width=1mm]}, line width = 0.7pt] (N8) -- (N6);
\draw [-{Stealth[length=2mm, width=1mm]}, line width = 0.7pt] (N6) -- (N5);
\draw [-{Stealth[length=2mm, width=1mm]}, line width = 0.7pt] (N5) -- (N2);
\draw [-{Stealth[length=2mm, width=1mm]}, line width = 0.7pt] (N2) -- (N1);
\draw [-{Stealth[length=2mm, width=1mm]}, line width = 0.7pt] (N1) -- (N8);

\draw [-{Stealth[length=2mm, width=1mm]}, line width = 0.7pt] (N7) -- (N4);
\draw [-{Stealth[length=2mm, width=1mm]}, line width = 0.7pt] (N4) -- (N3);
\draw [-{Stealth[length=2mm, width=1mm]}, line width = 0.7pt] (N3) -- (N7);
\end{scope}

\begin{scope}[shift = {(6,0)}]
\equic[1.5cm]{8};
\draw [-{Stealth[length=2mm, width=1mm]}, line width = 0.7pt] (N8) -- (N7);
\draw [-{Stealth[length=2mm, width=1mm]}, line width = 0.7pt] (N7) -- (N6);
\draw [-{Stealth[length=2mm, width=1mm]}, line width = 0.7pt] (N6) -- (N3);
\draw [-{Stealth[length=2mm, width=1mm]}, line width = 0.7pt] (N3) -- (N2);
\draw [-{Stealth[length=2mm, width=1mm]}, line width = 0.7pt] (N2) -- (N5);
\draw [-{Stealth[length=2mm, width=1mm]}, line width = 0.7pt] (N5) -- (N4);
\draw [-{Stealth[length=2mm, width=1mm]}, line width = 0.7pt] (N4) -- (N1);
\draw [-{Stealth[length=2mm, width=1mm]}, line width = 0.7pt] (N1) -- (N8);
\end{scope}
\end{tikzpicture}
    \caption{Examples of permutations in $\mathfrak{S}_8$ with 7 antiexceedances; the permutation on the left is of the form with two cycles while that on the right has one cycle.}
    \label{fig:A-minus2aexc}
\end{figure}

The following claim allows us to extract properties of antiexceedances of the permutations in $\Popt$ orbits of $A$-splittable $w$. 

\begin{lemma}\label{claim:aexc-split}
Let $w\in \mathfrak{S}_n$ be $A$-splittable. Suppose $\cyc(\Popt^j(w)) = 1$ for all $j < i$. If neither $\alpha$ nor $\beta$ are  antiexceedances in $\Popt^i(w)$, and $\gamma > \max \{ \alpha, \beta \}$, then $\gamma$ not an antiexceedance in $\Popt^i(w)$.
\end{lemma}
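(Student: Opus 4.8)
The plan is to prove the equivalent statement that the set $\mathcal N_i := [n]\setminus\Aexc(\Popt^i(w))$ of non-antiexceedances of $\Popt^i(w)$ always has the shape of a final segment together with at most one extra point below it, and to propagate this shape along the orbit by induction on $i$. Since $n$ is a non-antiexceedance of every permutation, $\mathcal N_i\neq\emptyset$, and the asserted property (``any $\gamma$ above two non-antiexceedances is a non-antiexceedance'') says precisely that if $\nu_1<\nu_2$ are the two smallest elements of $\mathcal N_i$ then $[\nu_2,n]\subseteq\mathcal N_i$; equivalently $\mathcal N_i=\{\nu_1\}\cup[\nu_2,n]$, a final segment $[\nu_2,n]$ plus possibly one isolated point $\nu_1<\nu_2-1$ (and just the interval $[\nu_1,n]$ when $\nu_1=\nu_2-1$). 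For $i=0$ this is immediate: an $A$-splittable $w$ has $\aexc(w)=n-2$, so $\mathcal N_0$ has exactly two elements, one being $n$.

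The engine is an exact description of how $\mathcal N$ changes in one step. Set $v=\Popt^{i-1}(w)$ with support $B=\{b_1<\cdots<b_r\}$. The hypothesis that $\cyc=1$ through step $i-1$ forces $\pi_T(v)$ to be a single cycle on $B$, necessarily the increasing one $(b_1\,b_2\,\cdots\,b_r)$; hence $\Popt(v)=v\,\pi_T(v)^{-1}$ fixes $[n]\setminus B$ and acts on $B$ by $b_s\mapsto v(b_{s-1})$ (indices cyclic). Comparing, for each $b_s$, the inequality $v^{-1}(b_s)\le b_s$ defining a non-antiexceedance of $v$ with the corresponding inequality for $\Popt(v)$ — which replaces $v^{-1}(b_s)$ by its cyclic successor in $B$ — one finds that the two sets differ in exactly one element: $\mathcal N_i=\mathcal N_{i-1}\cup\{m\}$ with $m=v(b_r)=v(\max B)$, and $m\in\Aexc(v)$. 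This is consistent with Lemma~\ref{lem:DN-A}(c), as $\aexc$ drops by $\cyc(\pi_T(v))=1$.

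Now the induction splits by the shape of $\mathcal N_{i-1}=\{\nu_1\}\cup[\nu_2,n]$. If $\mathcal N_{i-1}$ is a single interval $[\nu_1,n]$, then $m<\nu_1$ and $\mathcal N_i=\mathcal N_{i-1}\cup\{m\}$ is automatically a final segment plus at most one point, so the conclusion is free. If instead $\mathcal N_{i-1}$ is genuinely gapped ($\nu_1<\nu_2-1$), then $\mathcal N_i$ has the required shape \emph{if and only if} $m=\nu_2-1$, i.e.\ if and only if the new non-antiexceedance $v(\max B)$ is the largest antiexceedance of $v$. Everything thus reduces to the following fact, which I expect to be the main obstacle: whenever $\Popt^{i-1}(w)$ has gapped non-antiexceedance set, $\Popt^{i-1}(w)(\max B)=\max\Aexc(\Popt^{i-1}(w))$.

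This identity is genuinely dynamical — it fails for general single cycles, e.g.\ $v=(1\,2\,4\,3\,5)$ has $v(\max B)=1$ yet gapped $\mathcal N(v)=\{2,4,5\}$ with $\max\Aexc(v)=3$ — so its proof must use the orbit. I would establish it by strengthening the inductive hypothesis and working in the rank coordinates $b_s\leftrightarrow s$ on the support, in which $\pi_T(v)$ is the standard long cycle $(1\,2\,\cdots\,r)$ and one step of $\Popt$ cyclically rotates the rank-coordinate one-line word of $v$; there the antiexceedances and the image $v(\max B)$ are read off directly, and one shows the orbit enters a regime of cyclic shifts in rank coordinates, for which the largest antiexceedance is exactly the image of the largest support element. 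The delicate bookkeeping — and the part I would spend the most care on — is that a rotation can turn a support element into a fixed point, shrinking $B$ and forcing a re-indexing of ranks; I would verify that the correspondence between the top support elements and the antiexceedances (hence the needed identity in the gapped case) survives this re-indexing, initializing the argument from the explicit one-cycle and two-cycle normal forms of $A$-splittable $w$.
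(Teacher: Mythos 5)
Your reduction is sound and runs on the same engine the paper uses: combining the fact from \cite[Theorem 5.2]{DN21} (the unique element leaving $\Aexc$ at each step is $v(\max B)$, where $B$ is the support and $\pi_T(v)$ is the single increasing cycle on $B$) with Lemma~\ref{lem:DN-A}(c), the lemma is indeed equivalent to saying that $\mathcal{N}_i$ is always a final segment plus at most one point below it, and in the gapped case one step of the induction is equivalent to the identity $v(\max B)=\max\Aexc(v)$ for $v=\Popt^{i-1}(w)$. The problem is that you stop exactly there. That identity \emph{is} the entire content of the lemma --- your own example $(1\,2\,4\,3\,5)$ shows it is not a formal consequence of $\cyc(\pi_T(v))=1$, so it must be extracted from the dynamics --- and what you offer for it is a plan (``one shows the orbit enters a regime of cyclic shifts,'' ``I would verify that the correspondence survives this re-indexing'') rather than an argument. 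The re-indexing you defer is precisely where the difficulty lives: each time a rotation creates a fixed point, $\pi_T$ of the new permutation is the long increasing cycle on the \emph{new} support, so the pure-rotation picture is broken at every such event, and nothing in your write-up controls how $\max\Aexc(v)$ and the image of the new $\max B$ move relative to one another across these events. Since your reduction is an equivalence, there is no way around proving the identity, and it is not proven; the proposal is therefore an unproven reduction, not a proof.

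For comparison, the paper closes this gap with different bookkeeping: instead of propagating a shape invariant for $\mathcal{N}_i$, it records for each element $x$ the time $j_x$ at which $x$ ceases to be an antiexceedance, observes that $j_x$ is governed by the clockwise position of $w^{-1}(x)$ (each application of $\Popt$ advances preimages one step clockwise along the cycle of $\pi_T$, and $x$ drops out exactly when its preimage reaches the current largest support element), and then uses the explicit one-cycle and two-cycle normal forms of $A$-splittable permutations --- whose entries are forced to be arranged in decreasing order --- to compare $w^{-1}(\gamma)$ with $w^{-1}(\alpha)$ or $w^{-1}(\beta)$ and conclude $j_\gamma\le\min\{j_\alpha,j_\beta\}$. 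That comparison of initial preimage positions, anchored in the normal forms, is exactly the missing input you would need to establish your identity in the gapped case; without it (or something equivalent), your induction cannot close.
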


It will eventually turn out that the hypothesis in Lemma~\ref{claim:aexc-split} that $\cyc(\Popt^j(w)) = 1$ for all $j < i$ is unnecessary, but a priori we do not know that $A$-splittable permutations satisfy $\cyc(\Popt^k(w)) = 1$ for all $k$.

\begin{proof}
We will use the following fact extracted from \cite[Theorem 5.2]{DN21}: $r \in \Aexc(u) \setminus \Aexc(\Popt(u))$ if and only if $u^{-1}(r)$ is the largest entry in a non-singleton cycle of $\pi_T(u)$. Let $t_k$ be the largest element in the non-singleton cycle of $\Popt^k(w)$.

Suppose $w$ consists of two cycles. If $\alpha$ and $\beta$ are in the same cycle, without loss of generality $\alpha < \beta$, so $w^{-1}(\alpha) < \alpha < \gamma < w^{-1}(\gamma)$. Note that since $w_k^{-1}(x) = v_k(v_{k-1} (\cdots v_1(w^{-1}(x)) \cdots))$ and each $v_k$ results in a clockwise movement of the indices, we get a decreasing sequence $t_i - w_i(x)$ until it reaches 0 at which point $x$ ceases to be an antiexceedance. Since $w^{-1}(\gamma) > w^{-1}(\alpha)$, it follows that the indices $j_{\alpha}, j_{\gamma}$ for which $w_i^{-1}(j_{\gamma}) = t_k$ for some $k < i$ satisfy the property that $j_{\gamma} < j_{\alpha}$. In particular, $\gamma \in \Aexc(\Popt^{j_{\gamma}}(w)) \setminus \Aexc(\Popt^{j_{\gamma+1}}(w)) \subset \Aexc(\Popt^{j_{\alpha}}(w)) \setminus \Aexc(\Popt^{j_{\alpha+1}}(w))$ which is equivalent to the desired conclusion. If $\alpha$ and $\beta$ are not in the same cycle, let $\mathscr{C}$ be the cycle in $w$ containing $\gamma$, then note that one of $\alpha$ or $\beta$, without loss of generality assume it is $\alpha$, is situated counterclockwise along $\mathscr{C}$ from $\gamma > \max\{ \alpha, \beta \}$. This implies that $w^{-1}(\alpha) < w^{-1}(\gamma)$ and we can conclude with the same argument as earlier.  

Suppose $w$ consists of one cycle, and the two elements that are \emph{not} antiexceedances are given by $j$ and $n$. As before, it suffices to prove that $w^{-1}(\gamma) > \min \{ w^{-1}(\alpha), w^{-1}(\gamma)\}$. First, note that if $\gamma > j$ then we get the desired conclusion. If $\gamma <j$, we first consider the case that $\alpha = d_{j}$ and $\beta = d_{k}$ for some indices $j$ and $k$. without loss of generality $j < k$. Then $w^{-1}(\alpha) < \beta < \gamma < w^{-1}(\gamma)$, as desired. The case that $\alpha = c_j$ and $\beta = c_{k}$ is similar. Lastly, suppose $\alpha = c_j$ and $\beta = d_k$ for some indices $j,k$. If $\gamma = c_m$ for some $m < j$ then $w^{-1}(\gamma) > w^{-1} (\alpha) $; similarly, if $\gamma = d_q$ for some $q < k$ then $w^{-1}(\gamma) > w^{-1}(\beta)$.
\end{proof}

\begin{proof}[Proof of first part of Theorem~\ref{thm:type-A}]
We have already established the ``only if'' part of the claim earlier. It suffices to check the ``if'' direction. 

We induct on $i$ to show that if $w \in \mathfrak{S}_n$ is a $A$-splittable permutation then $\cyc(\pi_T(\Popt^i(w))) = 1$. The base case $i = 0$ is true by assumption. Next, we handle the inductive step. Suppose the largest element in the non-singleton cycle of $\pi_T(\Popt^i(w))$ is $t$ and the smallest element is $\tilde{t}$. For simplicity of notation, write $\Popt^i(w) = w_i$ and $\pi_T(\Popt^i(w)) = v_i$. 

Now, we must prove that $\cyc(\pi_T(\Popt^{i+1}(w))) = 1$. Equivalently, we need to prove that for any $r > s$ with $r,s \in \{x: w_{i+1}(x) \neq x\}$ and $s \neq v_{i+1}(r)$ (that is, $r,s$ are not fixed points under $w_{i+1}$ and $s$ does not immediately succeed $r$ in $v_{i+1}$) one of the following conditions holds:
\begin{enumerate}
    \item [(1)] There exist $a \in [s+1, r-1]$ and $b \in [r+1,n] \cup [1,s-1]$ such that either $w_{i+1}(a) = b$ or $w_{i+1}(b) = a$. 
    
    \begin{figure}[!htp]
        \centering
        \begin{tikzpicture}
\begin{scope}
\equicnl[1.2cm]{8}
\draw[-, line width = 0.7pt, dashed] (N1) -- (N4);
\draw[-{Stealth[length=2mm, width=1mm]}, line width = 0.7pt] (N2) -- (N6);
\draw (N1) node[right] {$r$};
\draw (N4) node[left] {$s$};
\draw (N2) node[below] {$a$};
\draw (N6) node[above] {$b$};
\end{scope}

\begin{scope}[shift={(5,0)}]
\equicnl[1.2cm]{8}
\draw[-, line width = 0.7pt, dashed] (N1) -- (N4);
\draw[-{Stealth[length=2mm, width=1mm]}, line width = 0.7pt] (N6) -- (N2);
\draw (N1) node[right] {$r$};
\draw (N4) node[left] {$s$};
\draw (N2) node[below] {$a$};
\draw (N6) node[above] {$b$};
\end{scope}
\end{tikzpicture}
        \caption{Illustration of condition (1).}
        \label{fig:A-cond1}
    \end{figure}
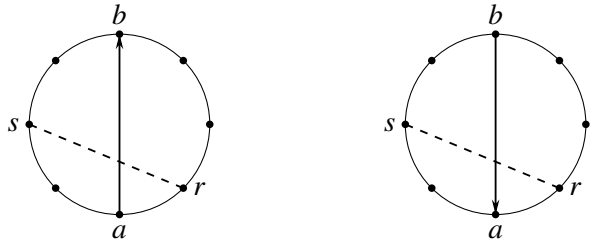
\item [(2)] There exist $e_1 \in [r+1, s-1]$ and $f_1 \in [s+1, n] \cup [1,r-1]$ such that either $w_{i+1}(e_1) = r$ and $w_{i+1}(r) = f_1$ or $w_{i+1}(f_i) = r$ and $w_{i+1}(r) = e_1$. 

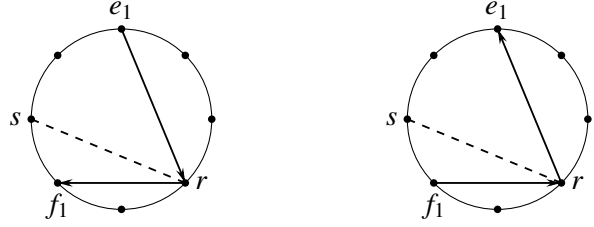
\begin{figure}[!htp]
    \centering
\begin{tikzpicture}
\begin{scope}
\equicnl[1.2cm]{8}
\draw[-, line width = 0.7pt, dashed] (N1) -- (N4);
\draw[-{Stealth[length=2mm, width=1mm]}, line width = 0.7pt] (N1) -- (N3);
\draw[-{Stealth[length=2mm, width=1mm]}, line width = 0.7pt] (N6) -- (N1);
\draw (N1) node[right] {$r$};
\draw (N4) node[left] {$s$};
\draw (N3) node[below] {$f_1$};
\draw (N6) node[above] {$e_1$};
\end{scope}

\begin{scope}[shift={(5,0)}]
\equicnl[1.2cm]{8}
\draw[-, line width = 0.7pt, dashed] (N1) -- (N4);
\draw[-{Stealth[length=2mm, width=1mm]}, line width = 0.7pt] (N1) -- (N6);
\draw[-{Stealth[length=2mm, width=1mm]}, line width = 0.7pt] (N3) -- (N1);
\draw (N1) node[right] {$r$};
\draw (N4) node[left] {$s$};
\draw (N3) node[below] {$f_1$};
\draw (N6) node[above] {$e_1$};
\end{scope}
\end{tikzpicture}
    \caption{Illustration of condition (2).}
    \label{fig:A-cond2}
\end{figure}
\item [(3)] There exist $e_1 \in [r+1, s-1]$ and $f_1 \in [s+1, n] \cup [1,r-1]$ such that either $w_{i+1}(e_1) = s$ and $w_{i+1}(s) = f_1$ or $w_{i+1}(f_i) = s$ and $w_{i+1}(s) = e_1$. 

\begin{figure}[!htp]
    \centering
    \begin{tikzpicture}
\begin{scope}
\equicnl[1.2cm]{8}
\draw[-, line width = 0.7pt, dashed] (N1) -- (N4);
\draw[-{Stealth[length=2mm, width=1mm]}, line width = 0.7pt] (N4) -- (N2);
\draw[-{Stealth[length=2mm, width=1mm]}, line width = 0.7pt] (N7) -- (N4);
\draw (N1) node[right] {$r$};
\draw (N4) node[left] {$s$};
\draw (N2) node[below] {$f_1$};
\draw (N7) node[above] {$e_1$};
\end{scope}

\begin{scope}[shift={(5,0)}]
\equicnl[1.2cm]{8}
\draw[-, line width = 0.7pt, dashed] (N1) -- (N4);
\draw[-{Stealth[length=2mm, width=1mm]}, line width = 0.7pt] (N4) -- (N7);
\draw[-{Stealth[length=2mm, width=1mm]}, line width = 0.7pt] (N2) -- (N4);
\draw (N1) node[right] {$r$};
\draw (N4) node[left] {$s$};
\draw (N2) node[below] {$f_1$};
\draw (N7) node[above] {$e_1$};
\end{scope}
\end{tikzpicture}
    \caption{Illustration of condition (3).}
    \label{fig:A-cond3}
\end{figure}
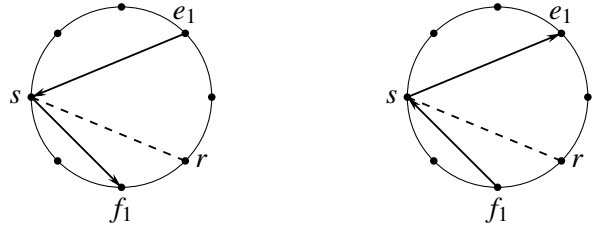
\item [(4)] We have $w_{i+1}(r) = s$, $\{ w_{i+1}^{-1}(r), w_{i+1}(s)\} \cap [r+1, s-1] \neq \emptyset$ and $\{ w_{i+1}^{-1}(r), w_{i+1}(s)\} \cap \{ [s+1,n] \cup [1,r-1]\}$.
\begin{figure}[!htp]
    \centering
    \begin{tikzpicture}
\begin{scope}
\equicnl[1.2cm]{8}
\draw[-{Stealth[length=2mm, width=1mm]}, line width = 0.7pt] (N1) -- (N4);
\draw[-{Stealth[length=2mm, width=1mm]}, line width = 0.7pt] (N3) -- (N1);
\draw[-{Stealth[length=2mm, width=1mm]}, line width = 0.7pt] (N4) -- (N7);
\draw (N1) node[right] {$r$};
\draw (N4) node[left] {$s$};
\draw (N7) node[above, shift={(0.5,0)}] {$w_{i+1}(s)$};
\draw (N3) node[below,shift={(-0.5,0)}] {$w_{i+1}^{-1}(r)$};
\end{scope}

\begin{scope}[shift={(5,0)}]
\equicnl[1.2cm]{8}
\draw[-{Stealth[length=2mm, width=1mm]}, line width = 0.7pt] (N1) -- (N4);
\draw[-{Stealth[length=2mm, width=1mm]}, line width = 0.7pt] (N4) -- (N3);
\draw[-{Stealth[length=2mm, width=1mm]}, line width = 0.7pt] (N7) -- (N1);
\draw (N1) node[right] {$r$};
\draw (N4) node[left] {$s$};
\draw (N7) node[above, shift={(0.5,0)}] {$w_{i+1}^{-1}(r)$};
\draw (N3) node[below,shift={(-0.5,0)}] {$w_{i+1}(s)$};
\end{scope}
\end{tikzpicture}
    \caption{Illustration of condition (4).}
    \label{fig:A-cond4}
\end{figure}
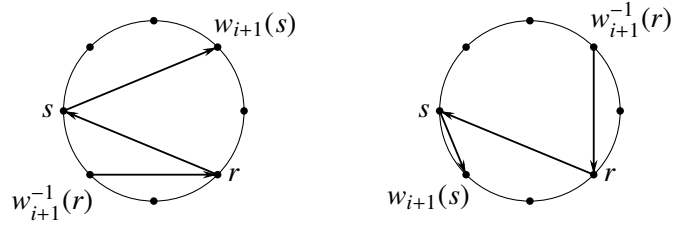
\item [(5)] We have $w_{i+1}(s) = r$, $\{ w_{i+1}^{-1}(s), w_{i+1}(r)\} \cap [r+1, s-1] \neq \emptyset$ and $\{ w_{i+1}^{-1}(s), w_{i+1}(r)\} \cap \{ [s+1,n] \cup [1,r-1]\}$.
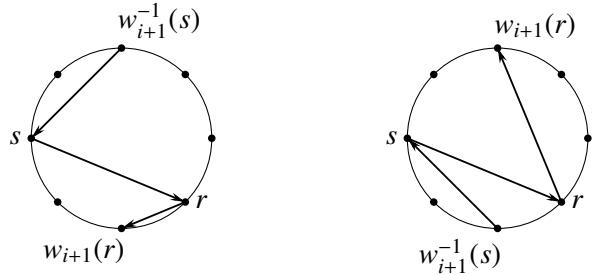
\begin{figure}[!htp]
    \centering
    \begin{tikzpicture}
\begin{scope}
\equicnl[1.2cm]{8}
\draw[-{Stealth[length=2mm, width=1mm]}, line width = 0.7pt] (N4) -- (N1);
\draw[-{Stealth[length=2mm, width=1mm]}, line width = 0.7pt] (N1) -- (N2);
\draw[-{Stealth[length=2mm, width=1mm]}, line width = 0.7pt] (N6) -- (N4);
\draw (N1) node[right] {$r$};
\draw (N4) node[left] {$s$};
\draw (N6) node[above, shift={(0.5,0)}] {$w_{i+1}^{-1}(s)$};
\draw (N2) node[below,shift={(-0.5,0)}] {$w_{i+1}(r)$};
\end{scope}

\begin{scope}[shift={(5,0)}]
\equicnl[1.2cm]{8}
\draw[-{Stealth[length=2mm, width=1mm]}, line width = 0.7pt] (N4) -- (N1);
\draw[-{Stealth[length=2mm, width=1mm]}, line width = 0.7pt] (N2) -- (N4);
\draw[-{Stealth[length=2mm, width=1mm]}, line width = 0.7pt] (N1) -- (N6);
\draw (N1) node[right] {$r$};
\draw (N4) node[left] {$s$};
\draw (N6) node[above, shift={(0.5,0)}] {$w_{i+1}(r)$};
\draw (N2) node[below,shift={(-0.5,0)}] {$w_{i+1}^{-1}(s)$};
\end{scope}
\end{tikzpicture}
    \caption{Illustration of condition (5).}
    \label{fig:A-cond5}
\end{figure}
\end{enumerate}

Since $\{ x: w_{i+1}(x) \neq x \} \subset \{ x: w_i(x) \neq x \}$, note that coupled with the induction hypothesis we know that for any $r, s$ satisfying the aformentioned properties, one of the above five conditions hold when we condition the permutation $w_i$ in lieu of $w_{i+1}$. In particular, we will show that after operating with the $\Popt$ operator, we preserve the property that one of the above five conditions continues to hold. 

Before we prove that claim, we first make a reduction. Note that we may without loss of generality assume that $w_i(r) \in [r+1, s]$ because if $w_i(r) \in [s+1, n] \cup [1, r-1]$, then since $v_i(r) \in [r+1, s-1]$, it follows that we have $w_{i+1}(v_i(r)) = w_i(r)$ satisfies condition (1) above. In particular, this in turn implies that we may without loss of generality assume that $w_i(r)$ is not an antiexceedance in $w_i$.

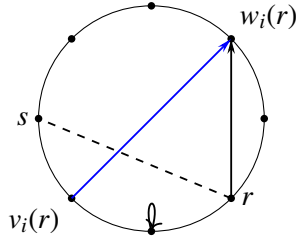
\begin{figure}[!htp]
    \centering
    \begin{tikzpicture}
\equicnl[1.5]{8};
\draw[-, line width = 0.7pt, dashed] (N1) -- (N4);
\draw[-{Stealth[length=2mm, width=1mm]}, line width = 0.7pt] (N1) -- (N7);
\draw (N1) node[right] {$r$};
\draw (N4) node[left] {$s$};
\draw (N7) node[above,shift={(0.5,0)}] {$w_{i}(r)$};
\draw (N3) node[below,shift={(-0.5,0)}] {$v_{i}(r)$};
\path (N2) edge [loop above,line width = 0.7pt] (N2);
\draw[-{Stealth[length=2mm, width=1mm]}, line width = 0.7pt, blue] (N3) -- (N7);
\end{tikzpicture}
    \caption{Illustration of why if $w_i(r) \in [s+1, n] \cup [1, r-1]$, then condition (1) is satisfied. The black edge is part of the diagram for $w_i$ and the blue edge is part of the diagram for $w_{i+1}$.}
    \label{fig:my_label}
\end{figure}

An identical argument shows that we may suppose without loss of generality that $w_i(s) \in [s+1, n] \cup [1,r]$. 

Now, we split into cases. 
\begin{enumerate}
    \item [(I)] Condition (1) above is satisfied for $w_i$ and there exists $a \in [r+1, s-1]$ and $b \in [s+1,n] \cup [1,r-1]$ such that $w_{i+1}(b) = a$. Note that if $b \neq v_i^{-1}(r)$, then $w_{i+1}^{-1}(b) = v_i(w_i^{-1}(b)) = v_i(a) \in [s+1,n] \cup [1,r-1]$, and consequently condition (1) would continue to hold for $w_{i+1}$. It remains to handle the case that $b = v_i^{-1}(r)$. 
    
    Suppose for the sake of contradiction that $w_{i+1}$ does not satisfy one of the five conditions.
    
    First, consider the situation of $v_i^{-1}(r) < r$.  Then, because $w_i(v_i^{-1}(r)) \in [r+1,s]$, it is not an antiexceedance in $w_i$. Now, our reduction above gave us that $w_i(r)$ is not an antiexceedance in $w_i$.
    
    We claim that $w_i^{-1}(t) \not \in [1, r-1]$. To see why this is the case, observe that because $\cyc(\pi_T(v_j)) = 1$ for all $j \leq i$, it follows that if $x,y,z$ are not fixed points of $w_i$ (and consequently not fixed points of $w_j$ for any $j \leq i$) then if $w_i^{-1}(x), w_i^{-1}(y), w_i^{-1}(z)$ appear in this order clockwise on the circle then $w_j^{-1}(x), w_j^{-1}(y), w_j^{-1}(z)$ appear in this order clockwise for all $j \leq i$. If $w_i^{-1}(t) \in [1, r-1]$, then in particular we have that $w_i^{-1}(t), v_i^{-1}(r), r$ appear clockwise in this order. In particular, there exists some $k$ such that $w_k^{-1}(t) > t, w_k^{-1}(w_i(v_i^{-1}(r))), w_k^{-1}(w_i(r))$. In $w_k$, $t$ is an antiexceedance while $v_i^{-1}(r), r$ are not antiexceedances. This is a contradiction to Lemma~\ref{claim:aexc-split}.
    
    Let $\mathcal{N}$ be the set of numbers clockwise from $v_i^{-1}(s)$ to $t$ lying in $v_i$ that are not antiexceedances in $w_i$. By Lemma~\ref{claim:aexc-split}, all numbers in $\mathcal{N}$ are not antiexceedances in $w_i$. Using an identical argument as the previous paragraph, we can show that any $q \in \mathcal{N}$ satisfies $w_i^{-1}(q) \not \in [1,r-1]$.
    
    Let $\mathcal{Z}$ denote the set of elements situated clockwise from $\tilde{t}$ to $r$. We claim that there exists an element $z \in \mathcal{Z}$ such that $w_i^{-1}(z) \in [s, n]$. This would then provide a contradiction to the previous paragraph. To show the claim, first note that if $w_i^{-1}(z) \in [r+1, s-1]$ then by the previous paragraph it follows that $w_i^{-1}(z) < v_i^{-1}(s)$ and in particular $w_{i+1}^{-1}(z) \in [r+1, s-1]$ as well which would then allow us to satisfy condition (1). Consequently, it suffices to rule out the possibility that $w_i^{-1}(z) \in [1, s-1]$ for all such elements $z \in \mathcal{Z}$. Note that $w_i^{-1}(z) \neq v_i^{-1}(s)$. Consequently, there are only $|\mathcal{Z}| - 1$ possibilities for the values of $w_i^{-1}(z)$, which is a contradiction. Refer to Figure~\ref{fig:A-case(I)} for a depiction of this claim.
    \begin{figure}[!htp]
        \centering
        \begin{tikzpicture}
\begin{scope}
\equicnl[2.5]{16};
\draw[-, dashed, line width = 0.7pt] (N1) -- (N9);
\draw (N1) node[right] {$r$};
\draw (N9) node[left] {$s$};
\path (N16) edge [loop left,line width = 0.7pt] (N16);
\draw[-{Stealth[length=2mm, width=1mm]}, line width = 0.7pt] (N15) -- (N4);
\draw (N15) node[right] {$v_i^{-1}(r)$};
\draw (N4) node[below, shift={(-0.5,0)}] {$w_i(v_i^{-1}(r))$};
\fill[red](N4) circle (0.07 cm);
\draw[-{Stealth[length=2mm, width=1mm]}, line width = 0.7pt] (N1) -- (N7);
\fill[red](N7) circle (0.07 cm);
\draw (N7) node[left] {$w_i(r)$};
\draw (N11) node[above] {$t$};
\path (N12) edge [loop below,line width = 0.7pt] (N12);
\draw (N13) node[above] {$\tilde{t}$};
\draw (N14) node[above] {$z$};
\draw[-{Stealth[length=2mm, width=1mm]}, line width = 0.7pt] (N14) -- (N10);
\end{scope}

\begin{scope}[shift={(7,0)}]
\equicnl[2.5]{15};
\draw[-, dashed, line width = 0.7pt] (N1) -- (N8);
\draw (N1) node[right] {$\tilde{t}$};
\draw (N8) node[left] {$s$};
\path (N15) edge [loop left,line width = 0.7pt] (N15);
\path (N14) edge [loop left,line width = 0.7pt] (N14);
\draw (N13) node[right, shift = {(0.1,0)}] {$t$};
\draw[-{Stealth[length=2mm, width=1mm]}, line width = 0.7pt] (N1) -- (N5);
\draw[-{Stealth[length=2mm, width=1mm]}, line width = 0.7pt] (N13) -- (N4);
\fill[red](N4) circle (0.07 cm);
\fill[red](N5) circle (0.07 cm);
\draw (N4) node[below, shift = {(0.1,0)}] {$w_i(t)$};
\draw (N5) node[left, shift = {(-0.1,-0.1)}] {$w_i(\tilde{t})$};
\draw[-{Stealth[length=2mm, width=1mm]}, blue, line width = 0.7pt] (N2) -- (N5);
\draw[-{Stealth[length=2mm, width=1mm]}, blue, line width = 0.7pt] (N1) -- (N4);
\path (N9) edge [loop right,line width = 0.7pt] (N9);

\draw[-{Stealth[length=2mm, width=1mm]}, blue, dotted, line width = 0.7pt] (N10) -- (N6);
\draw[-{Stealth[length=2mm, width=1mm]}, blue, dashed, line width = 0.7pt] (N10) -- (N8);
\draw[-{Stealth[length=2mm, width=1mm]}, blue, dashed, line width = 0.7pt] (N8) -- (N7);
\draw (N10) node[above,shift={(0.1,0.1)}] {$v_i^{-1}(s)$};
\end{scope}
\end{tikzpicture}
        \caption{Illustrations for case (I). The figure on the left is a depiction of the situation when $v_i^{-1}(r) < r$. On the right, the figure shows the situation when $r = \tilde{t}$. Here the blue edges should be thought of as part of the diagram for $w_{i+1}$. The red nodes are not antiexceedances.}
        \label{fig:A-case(I)}
    \end{figure}
    Next, we handle the case where $v_i^{-1}(r) > r$. This case can only occur if $ r = \tilde{t}$ and $v_i^{-1}(r) = t$. Note by our earlier reduction we have that $w_i(\tilde{t}) \geq w_{i+1}^{-1}(w_i(\tilde{t}))$ and since $w_{i+1}(r) = w_i(t) > r$, it follows that $w_i(t)$ and $w_i(\tilde{t})$ are both not antiexceedances in $w_{i+1}$. By Lemma~\ref{claim:aexc-split}, we have that $s$ and $v_i(s)$ are both not antiexceedances in $w_{i+1}$ as well. If $w_{i+1}^{-1}(v_i(s)) \in [\tilde{t}, s-1]$ then we satisfy condition (1). Otherwise, we necessarily have $w_{i+1}^{-1}(v_i(s)) = s$. But we also know that $w_{i+1}^{s} < s$, and consequently we satisfy condition (3), a contradiction as desired.

    %By applying Lemma~\ref{claim:aexc-split}, it follows that $v_i^{-1}(s)$ is also not an antiexceedance. If $w_i^{-1}(v_i^{-1}(s)) \in [1,r-1]$ then $w_i^{-1}(v_i^{-1}(s)) < v_i^{-1}(r)$ and in particular this implies that $w_{i+1}(v_i^{-1}(s)) = v_i(w_i^{-1}(v_i^{-1}(s))) \in [1, r-1] $ and so condition (1) is satisfied. Consequently, we may assume that $w_i^{-1}(v_i^{-1}(s)) \in [r+1, s-1]$.
    
    \item [(II)] Condition (1) above is satisfied for $w_i$ and there exists $a \in [r+1, s-1]$ and $b \in [s+1,n] \cup [1,r-1]$ such that $w_{i+1}(a) = b$. We may assume that $w_i(v_i^{-1}(r)) \not \in [r+1,s-1]$ as otherwise we may reduce to case (I). Analogously as above, if $a \neq v_i^{-1}(s)$ then we get that condition (1) is satisfied for $w_{i+1}$. Consequently, henceforth we will assume that $a = v_i^{-1}(s)$.
    
    Suppose for the sake of contradiction that $w_{i+1}$ does not satisfy one of the five conditions.
    
    If $w_i^{-1}(v_i^{-1}(s)) > v_i^{-1}(s)$ then it follows that $w_i^{-1}(v_i^{-1}(s)) \in [s+1,n] \cup [1, v_{i}^{-1}(r))$. Here we have $w_i^{-1}(v_i^{-1}(s)) \neq s$ because $s$ is not a fixed point in $w_{i+1}$ and $w_i^{-1}(v_i^{-1}(s)) \neq v_i^{-1}(r)$ because we are not in case (I). In particular, $w_{i+1}^{-1}(v_i^{-1}(s)) \in [s+1,n]\cup[1,r-1]$ and so condition (1) is satisfied for $w_{i+1}$, which is a contradiction. Now, we have that $v_i^{-1}(s) \in [r+1, s-1]$ is not an antiexceedance in $w_i$ and so in particular $v_i^{-2}(s) \in [r+1, s-1]$. Similarly, since $v_i^{-2}(s) \neq v_i^{-1}(r)$ we must have that $v_i^{-2}(s) \in [r+1,s-1]$ and it is not an antiexceedance in $w_i$ either. By Lemma~\ref{claim:aexc-split} we have that $s$ is not an antiexceedance in $w_i$ as well. We can show that $w_i^{-1}(s) \not \in [1,r-1]$ by a similar argument as in case (I) by considering the relative clockwise positions of $w_i^{-1}(s), w_i^{-1}(v_i^{-1}(s)), w_i^{-1}(v_i^{-2}(s))$. Consequently, $w_i^{-1}(s) \in [r+1,s-1]$ and in particular $w_{i+1}(s) \in [s+1,n]\cup[1,r-1]$ while $w_{i+1}^{-1}(s) \in [r+1,s-1]$ so that condition (3) is satisfied for $w_{i+1}$, which is a contradiction.
    
    \begin{figure}[!htp]
        \centering
        \begin{tikzpicture}
\equicnl[2.5]{15};
\draw[-, dashed, line width = 0.7pt] (N1) -- (N9);
\draw (N1) node[right] {$r$};
\draw (N9) node[left] {$s$};
\draw[-{Stealth[length=2mm, width=1mm]}, line width = 0.7pt] (N2) -- (N5);
\draw (N5) node[below,shift={(-0.1,-0.1)}] {$v_i^{-2}(s)$};
\draw[-{Stealth[length=2mm, width=1mm]}, line width = 0.7pt] (N3) -- (N9);
\draw[-{Stealth[length=2mm, width=1mm]}, line width = 0.7pt] (N4) -- (N8);
\path (N7) edge [loop right,line width = 0.7pt] (N7);
\path (N6) edge [loop right,line width = 0.7pt] (N6);
\draw (N8) node[left] {$v_i^{-1}(s)$};
\draw[-{Stealth[length=2mm, width=1mm]}, line width = 0.7pt] (N8) -- (N12);
\draw[-{Stealth[length=2mm, width=1mm]}, blue, line width = 0.7pt] (N9) -- (N12);
\draw[-{Stealth[length=2mm, width=1mm]}, blue, line width = 0.7pt] (N4) -- (N9);
\fill[red](N5) circle (0.07 cm);
\fill[red](N8) circle (0.07 cm);

\end{tikzpicture}
        \caption{An illustration of case (II). Here the red nodes are not antiexceedances. The blue edges should be thought of as part of $w_{i+1}$.}
        \label{fig:my_label}
    \end{figure}
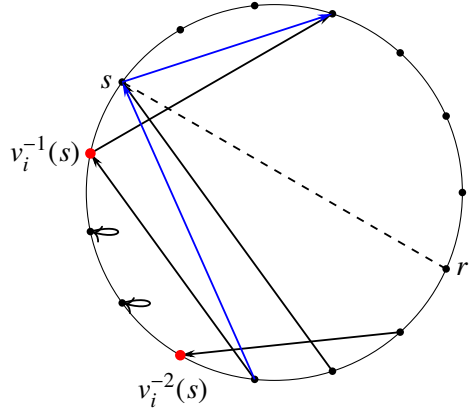
    \item [(III)] Condition (2) above is satisfied for $w_i$. Because of the reduction we made before we split into cases, in this case we necessarily have that $w_i^{-1}(r) \in [s+1,n] \cup [1,r-1]$ and $w_i(r) \in [r+1,s-1]$. Suppose for the sake of contradiction that $w_{i+1}$ does not satisfy one of the five conditions. We may assume that we are not in either of cases (I) or (II). That is, we may assume that $w_i(v_i^{-1}(r)) \in [s+1, n] \cup [1,r]$ and
    \begin{equation}\label{eq:1}
        w_i(v_i^{-1}(s)) \in [r, s-1].
    \end{equation}
   Note that since by assumption of condition (2), $w_i^{-1}(r) \neq v_i^{-1}(s)$ so that $w_{i+1}(s) \in [r+1, s-1]$. Now we split further into cases. First, if $w_i^{-1}(s) \in [s+1,n] \cup [1, v_i^{-1}(r))$, then we satisfy condition (2), a contradiction. If $w_i^{-1}(s) = v_i^{-1}(r)$ so that $w_{i+1}(r) = s$ then (\ref{eq:1}) tells us that condition (4) is satisfied, a contradiction. Otherwise, $w_i^{-1}(s) < s$ and both $w_i(r)$ and $s$ are not antiexceedances in $w_i $ and so it follows from Lemma~\ref{claim:aexc-split} that $v_i(s)$ is also not an antiexceedance in $w_i$. As in case (I), we may argue that $w_i^{-1}(v_i(s)) \not \in [1, r-1]$ by considering the relative clockwise positions of $ w_i^{-1}(v_i(s)), r, w_i^{-1}(v_i^{-1}(s))$. Consequently, $w_i^{-1}(v_i(s)) \in [r+1,s]$. 
    
    Suppose $w_i^{-1}(v_i(s)) \neq s$. By assumption since we are not in case (II), it follows that $w_i^{-1}(v_i(s)) \neq v_i^{-1}(s)$ so that $w_{i+1}(v_i(s)) \in [1,s-1]$ and so condition (1) is satisfied, which is a contradiction. 
    
    It remains to handle the case where $w_i^{-1}(v_i(s)) = s$. Now, consider the same argument as above for $v_i^2(s)$. Similarly, we show that if $w_i^{-1}(v_i^2(s)) \neq v_i(s)$, then $w_{i+1}$ satisfies one of the five conditions. Iteratively applying this argument it follows that for all elements $z$ in $v_i$ clockwise from $v_i(s)$ to $t$, we have that $v_{i+1}(z) = z$. In particular, since $w_i^{-1}(r), w_i^{-1}(v_i^{-1}(r)) \in \{ t \} \cup [1,r-1]$, this implies that $ w_{i+1}^{-1}(r), w_{i+1}^{-1}(v_i^{-1}(r)) \in [\tilde{t}, v_i^{-1}(r))$ are both not antiexceedances in $w_{i+1}$. By Lemma~\ref{claim:aexc-split}, this implies that $v_i(r)$ is not an antiexceedance in $w_{i+1}$ as well, and in particular because we are not in case (I), we have that $w_{i+1}^{-1}(v_i(r)) < r$, which means $w_{i+1}$ satisfies condition (1), a contradiction. 
    
    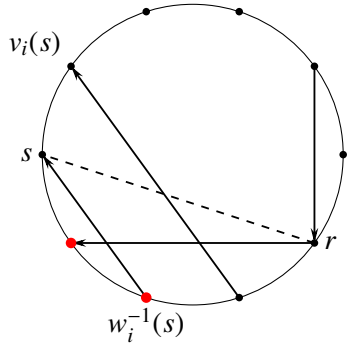
\begin{figure}[!htp]
        \centering
        \begin{tikzpicture}
\equicnl[2]{10};
\draw[-, dashed, line width = 0.7pt] (N1) -- (N5);
\draw (N1) node[right] {$r$};
\draw (N5) node[left] {$s$};
\draw[-{Stealth[length=2mm, width=1mm]}, line width = 0.7pt] (N9) -- (N1);
\draw[-{Stealth[length=2mm, width=1mm]}, line width = 0.7pt] (N1) -- (N4);
\draw[-{Stealth[length=2mm, width=1mm]}, line width = 0.7pt] (N2) -- (N6);
\draw[-{Stealth[length=2mm, width=1mm]}, line width = 0.7pt] (N3) -- (N5);
\draw (N3) node[below] {$w_i^{-1}(s)$};
\draw (N6) node[above left] {$v_i(s)$};
\fill[red](N3) circle (0.07 cm);
\fill[red](N4) circle (0.07 cm);
\end{tikzpicture}
        \caption{An illustration for case (III). The nodes labelled red are not antiexceedances.}
        \label{fig:my_label}
    \end{figure}
    
    \item [(IV)] Condition (3) is satisfied for $w_i$. This case is very similar to case (III). Because of the reduction that we made before we split into cases, in this case we necessarily have that $w_i^{-1}(s) \in [r+1, s-1]$ and $w_i(s) \in [s+1, n]\cup [1,r-1]$. Suppose for the sake of contradiction that $w_{i+1}$ does not satisfy one of the five conditions. We may assume that we are not in either of case (I) so that $w_i^{-1}(v_i^{-1}(s)) \neq v_i^{-1}(r)$ and in particular $w_i^{-1}(v_i^{-1}(s)) \in [r+1, s-1]$. That is, $v_i^{-1}(s)$ is not an antiexceedance in $w_i$. We may also assume that we are not in case (II) so that $w_i^{-1}(v_{i}(s)) \neq v_i^{-1}(r)$. Furthermore, we may also assume that $w_i^{-1}(r) \in [s+1,n] \cup [1,r-1]$. This is because $w_{i+1}(r) = w_i(v_i^{-1}(r)) \in [s+1,n] \cup [1,r-1]$ and if $w_i^{-1}(r) \in [r+1, s-1]$ then $w_{i+1}$ would satisfy condition (2) if $w_i^{-1}(r) \in [s-1]$ and would satisfy condition (4) if $w_i^{-1}(r) = s$ since $ w_i^{-1}(v_i^{-1}(s)) \in [r+1, s-1]$.

    Since $s$ is also by assumption not an antiexceedance in $w_i$, it follows by Lemma~\ref{claim:aexc-split} that $v_i(s)$ is not an antiexceedance in $w_i$ either. By a similar argument as in case (I), we can show that $w_i^{-1}(v_i(s)) \not \in [1,r-1]$ by considering the relative clockwise positions of $w_i^{-1}(v_i(s)), w_i^{-1}(s), w_i^{-1}(v_i^{-1}(s))$.
    
    If $w_i^{-1}(v_i(s)) < v_i^{-1}(s)$, then we get that $w_{i+1}$ satisfies condition (2), a contradiction. Otherwise, we have that $w_i^{-1}(v_i(s)) = s$ so that $w_{i+1}(v_i(s)) = v_i(s)$. 
    
    Now, consider the same argument for $v_i^2(s)$. Similarly, we show that if $w_i^{-1}(v_i^2(s)) \neq v_i(s)$ then $w_{i+1}$ satisfies one of the five conditions. Iteratively applying this argument it follows that for all elements $z$ in $v_i$ clockwise from $v_i(s)$ to $t$, we have that $v_{i+1}(z) = z$. In particular, since $w_i^{-1}(r), w_i^{-1}(v_i^{-1}(r)) \in \{ t \} \cup [1,r-1]$, this implies that $ w_{i+1}^{-1}(r), w_{i+1}^{-1}(v_i^{-1}(r)) \in [\tilde{t}, v_i^{-1}(r))$ are both not antiexceedances in $w_{i+1}$. By Lemma~\ref{claim:aexc-split}, this implies that $v_i(r)$ is not an antiexceedance in $w_{i+1}$ as well, and in particular because we are not in case (I), we have that $w_{i+1}^{-1}(v_i(r)) < r$, which means $w_{i+1}$ satisfies condition (1), a contradiction. 
    
    \item [(V)] Condition (4) is satisfied for $w_i$. Because of the reduction we made before splitting into cases, it follows that we necessarily have that $w_i^{-1}(r) \in [r+1, s-1]$ and $w_i(s) \in [s+1,n] \cup [1,r-1]$. We may assume that we are not in cases (I) or (II) so that $w_i^{-1}(v_i^{-1}(r)) \in [s+1,n] \cup [1,r-1]$ and $w_i^{-1}(v_i^{-1}(s)) \in [r,s-1]$. 
    If $w_i(r) \neq v_i^{-1}(s)$, then $w_{i+1}^{-1}(r) \in [r+1,s-1]$ and $w_{i+1}(r) = w_i^{-1}(v_i^{-1}(r)) \in [s+1,n] \cup [1,r-1]$ so that $w_{i+1}$ satisfies condition (1).
    
    If $w_i(r) = v_i^{-1}(s)$, then $w_{i+1}^{-1}(s) = v_i(r) \in [r+1, s-1]$, $w_{i+1}(s) = r$ and $w_{i+1}^{-1}(r) = w_i^{-1}(v_i^{-1}(r)) \in [s+1,n] \cup [1,r-1]$ so that $w_{i+1}$ satisfies condition (5). 
    
    \begin{figure}[!htp]
        \centering
        \begin{tikzpicture}
\begin{scope}
\equicnl[2]{13};
\draw[-{Stealth[length=2mm, width=1mm]}, line width = 0.7pt] (N1) -- (N7);
\draw (N1) node[right] {$r$};
\draw (N7) node[left] {$s$};
\draw[-{Stealth[length=2mm, width=1mm]}, line width = 0.7pt] (N3) -- (N1);
\draw[-{Stealth[length=2mm, width=1mm]}, line width = 0.7pt] (N7) -- (N9);
\draw[-{Stealth[length=2mm, width=1mm]}, line width = 0.7pt] (N13) -- (N11);
\draw[-{Stealth[length=2mm, width=1mm]}, blue, line width = 0.7pt] (N1) -- (N11);
\draw[-{Stealth[length=2mm, width=1mm]}, blue, line width = 0.7pt] (N4) -- (N1);
\end{scope}

\begin{scope}[shift={(6,0)}]
\equicnl[2]{15};
\draw[-{Stealth[length=2mm, width=1mm]}, line width = 0.7pt] (N1) -- (N8);
\draw (N1) node[right] {$r$};
\draw (N8) node[left] {$s$};
\draw (N3) node[below] {$v_i(r)$};
\draw (N6) node[left] {$v_i^{-1}(s)$};

\path (N2) edge [loop left,line width = 0.7pt] (N2);
\path (N7) edge [loop right,line width = 0.7pt] (N7);
\draw[-{Stealth[length=2mm, width=1mm]}, line width = 0.7pt] (N6) -- (N1);
\draw[-{Stealth[length=2mm, width=1mm]}, line width = 0.7pt] (N15) -- (N13);
\draw[-{Stealth[length=2mm, width=1mm]}, line width = 0.7pt] (N8) -- (N11);
\draw[-{Stealth[length=2mm, width=1mm]}, blue, dashed, line width = 0.7pt] (N3) -- (N8);
\draw[-{Stealth[length=2mm, width=1mm]}, blue, dashed, line width = 0.7pt] (N8) -- (N1);
\draw[-{Stealth[length=2mm, width=1mm]}, blue, dashed, line width = 0.7pt] (N1) -- (N13);
\end{scope}
\end{tikzpicture}
        \caption{An illustration for case (VI). The black edges are part of the diagram for $w_i$ while the blue edges are part of the diagam for $w_{i+1}$. On the left, $w_i(r) \neq v_i^{-1}(s)$ and on the right we have the situation where $w_i(r) = v_i^{-1}(s)$.}
        \label{fig:A-case5}
    \end{figure}
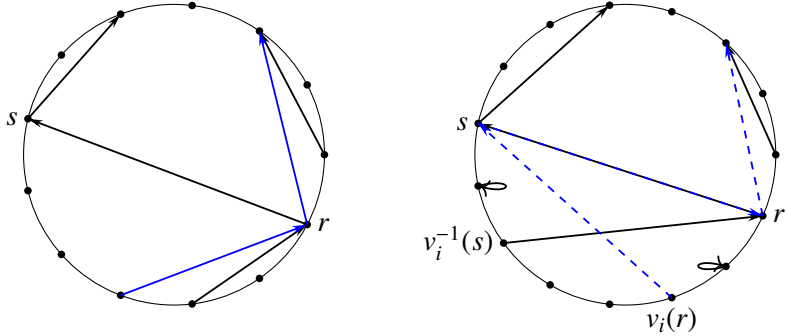
    \item [(VI)] Condition (5) is satisfied for $w_i$. Because of the reduction we made before splitting into cases, it follows that we necessarily have that $w_i(r) \in [r+1, s-1]$ and $w_i^{-1}(s) \in [s+1,n] \cup [1,r-1]$.  We may assume that we are not in case (II) so that $w_i^{-1}(v_i^{-1}(r)) \in [s,n] \cup [1,r-1]$ and $w_i^{-1}(v_i^{-1}(s)) \in [r+1,s-1]$.
    
    If $w_i^{-1}(s) \neq v_i^{-1}(r)$, then $w_{i+1}(s) \in [s+1,n] \cup [1,r-1]$ while $w_{i+1}(s) = w_i^{-1}(v_i^{-1}(s)) \in [r+1,s-1]$ so that $w_{i+1}$ satisfies condition (2). 
    
    If $w_i^{-1}(s) = v_i^{-1}(r)$, then $w_{i+1}(r) = s$, $w_{i+1}^{-1}(r) = v_i(s) \in [s+1,n] \cup [1,r-1]$ while $w_{i+1}(s) = w_i^{-1}(v_i^{-1}(s)) \in [r+1,s-1]$ so that $w_{i+1}$ satisfies condition (4).
    
    \begin{figure}
        \centering
        
\begin{tikzpicture}
\begin{scope}
\equicnl[2]{14};
\draw[-{Stealth[length=2mm, width=1mm]}, line width = 0.7pt] (N8) -- (N1);
\draw (N1) node[right] {$r$};
\draw (N8) node[left] {$s$};
\draw[-{Stealth[length=2mm, width=1mm]}, line width = 0.7pt] (N1) -- (N4);
\draw[-{Stealth[length=2mm, width=1mm]}, line width = 0.7pt] (N11) -- (N8);
\draw[-{Stealth[length=2mm, width=1mm]}, line width = 0.7pt] (N7) -- (N5);
\draw[-{Stealth[length=2mm, width=1mm]}, blue, line width = 0.7pt] (N12) -- (N8);
\draw[-{Stealth[length=2mm, width=1mm]}, blue, line width = 0.7pt] (N8) -- (N5);
\end{scope}

\begin{scope}[shift={(6,0)}]
\equicnl[2]{15};
\draw[-{Stealth[length=2mm, width=1mm]}, line width = 0.7pt] (N8) -- (N1);
\draw (N1) node[right] {$r$};
\draw (N8) node[left] {$s$};
\draw[-{Stealth[length=2mm, width=1mm]}, line width = 0.7pt] (N1) -- (N4);
\draw[-{Stealth[length=2mm, width=1mm]}, line width = 0.7pt] (N7) -- (N5);
\path (N9) edge [loop right,line width = 0.7pt] (N9);
\path (N15) edge [loop left,line width = 0.7pt] (N15);
\draw (N14) node[right] {$v_i^{-1}(r)$};
\draw (N10) node[above, shift={(-0.1,0.1)}] {$v_i(s)$};
\draw[-{Stealth[length=2mm, width=1mm]}, line width = 0.7pt] (N14) -- (N8);

\draw[-{Stealth[length=2mm, width=1mm]}, blue, dashed, line width = 1.2pt] (N1) -- (N8);
\draw[-{Stealth[length=2mm, width=1mm]}, blue, dashed, line width = 1.2pt] (N8) -- (N5);
\draw[-{Stealth[length=2mm, width=1mm]}, blue, dashed, line width = 1.2pt] (N10) -- (N1);
\end{scope}
\end{tikzpicture}
        \caption{An illustration for case (VI). The black edges are part of the diagram for $w_i$ while the blue edges are part of the diagam for $w_{i+1}$. On the left, $w_i^{-1}(s) \neq v_i^{-1}(r)$ and on the right we have the situation where $w_i^{-1}(s) = v_i^{-1}(r)$.}
        \label{fig:A-case6}
    \end{figure}
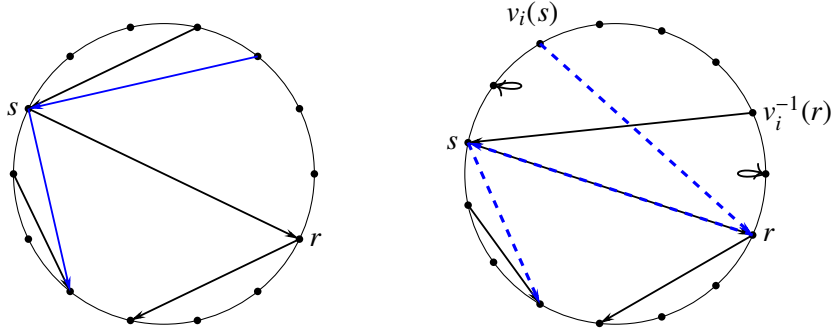
\end{enumerate}
\end{proof}

To count the number of $A$-splittable permutations, we make a psychological shift to work with \emph{ascents} instead. 

\begin{definition}
An \emph{ascent} of a permutation $w \in \mathfrak{S}_n$ is an element $i \in [n]$ such that $w(i) < w(i+1)$.
\end{definition}

The following result is folklore, but since the proof is short we include it for completeness. 
\begin{lemma}
The number of permutations $w \in \mathfrak{S}_n $ with exactly $n-2$ ascents is given by $2^n - n-1$. 
\end{lemma}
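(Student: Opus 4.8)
The plan is to first translate the statement about ascents into one about descents, and then to count single-descent permutations by a clean over-counting argument over subsets of $[n]$.

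First I would observe that for $w \in \mathfrak{S}_n$ each of the $n-1$ consecutive positions $i \in [n-1]$ is either an ascent or a descent, so that having exactly $n-2$ ascents is equivalent to having exactly one descent. Thus it suffices to show that the number of permutations of $[n]$ with exactly one descent is $2^n - n - 1$; this quantity is precisely the Eulerian number counting permutations of $[n]$ with a single descent.

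To count these, I would set up a surjection from the power set of $[n]$ onto the set of permutations with at most one descent. To a subset $S = \{s_1 < \cdots < s_k\} \subseteq [n]$ with complement $\{t_1 < \cdots < t_{n-k}\}$, I associate the permutation $w_S$ whose one-line notation is $s_1 \cdots s_k \, t_1 \cdots t_{n-k}$. Since each of the two blocks is increasing, $w_S$ has at most one descent, occurring at position $k$ precisely when $s_k > t_1$; conversely every permutation with at most one descent evidently arises this way. The crux is then the multiplicity analysis: I would show that if $w$ has exactly one descent, say at position $p$, then the only subset mapping to it is $S = \{w(1), \ldots, w(p)\}$, so that $w$ has a single preimage, whereas the identity permutation equals $w_S$ exactly when $S = \{1, 2, \ldots, k\}$ for some $0 \le k \le n$, giving it precisely $n+1$ preimages.

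Counting preimages on both sides then yields $2^n = (n+1) + N$, where $N$ is the number of permutations with exactly one descent, whence $N = 2^n - n - 1$, as claimed. The one delicate point, and the step I would expect to require the most care, is the multiplicity bookkeeping for the identity: one must verify that the identity is the unique image whose fiber has more than one element, that this fiber consists of exactly the $n+1$ initial-segment subsets, and that every other fiber is a singleton.
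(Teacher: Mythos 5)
Your proof is correct, and it takes a genuinely different route from the paper's. The paper first reverses the permutation to reduce to counting permutations with exactly one ascent, and then argues by induction on $n$: conditioning on the position of the letter $n$ in one-line notation yields the recurrence $f(n) = f(n-1) + 2^{n-1} - 1$, which telescopes to the answer. You instead pass to descents and give a one-shot, non-inductive count: the map $S \mapsto w_S$ from subsets of $[n]$ to permutations with at most one descent is surjective, each permutation with exactly one descent has a unique preimage (the set of letters before the descent), and the identity has exactly the $n+1$ initial segments as preimages, so $2^n = (n+1) + N$. Your multiplicity bookkeeping is sound: a descent of $w_S$ can only occur at the block boundary, it fails to occur precisely when $S$ is an initial segment, and for a one-descent permutation the block boundary (hence $S$) is forced. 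The two arguments buy slightly different things: the paper's recurrence is in the same spirit as its later case analyses (tracking where a distinguished letter sits), while your fiber-counting argument is the standard direct derivation of the Eulerian number $\left\langle {n \atop 1} \right\rangle = 2^n - n - 1$ and avoids induction entirely. Either is a complete proof of the lemma.
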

\begin{proof}
For simplicity of notation in the proof, note that by reversing the permutation it suffices to show that the number of permutations with exactly one ascent is given by $2^n - n -1$. 

Let $f(n)$ be the number of permutations with exactly one ascent. We will prove this lemma via induction, with base case $n = 1 $ vacuously true. For the inductive step, observe that it suffices to prove the recurrence $f(n) = f(n-1) + 2^{n-1} - 1$. This recurrence follows by studying where $n$ lies when considering the single-line notation of the permutation $w$. If $n$ is the first element in the single-line notation, then it follows that the remaining $n-1$ elements need to have an ascent, and there are $f(n-1)$ such permutations. Otherwise, if $n$ is not the first element then the element before $n$ forms an ascent and there cannot be any other ascents. In other words, we can split $\{1, \ldots, n-1 \} = A \sqcup B$ into two sets: $A$ contains elements occurring before $n$, and $B$ contains elements occurring after $n$. Within $A$ and $B$, the elements occur in decreasing order. Furthermore, $A \neq \emptyset$. There are $2^{n-1} - 1$ ways to make such a partition: pick which of $A$ and $B$ each of the $n-1$ elements lie in, but we need to remove one of the possibilities where $A = \emptyset$.
\end{proof}

\begin{lemma}
The \emph{Foata bijection} provides a mapping between the permutations $w \in \mathfrak{S}_n$ with exactly $n-2$ antiexceedances and the number of permutations $w' \in \mathfrak{S}_n$ with exactly $n-2$ ascents.
\end{lemma}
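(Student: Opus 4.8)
The plan is to produce a single \emph{statistic-preserving} bijection $\Psi\colon\mathfrak S_n\to\mathfrak S_n$ with $\aexc(w)=\mathrm{asc}(\Psi(w))$; restricting $\Psi$ to the fiber $\aexc=n-2$ then maps bijectively onto the permutations with exactly $n-2$ ascents, which is all the statement asks for. The first step I would take is a purely bookkeeping reformulation of the antiexceedance count. Setting $j=w^{-1}(i)$ in Definition~\ref{def:aexc-A} turns the condition $i<w^{-1}(i)$ into $w(j)<j$, so that
\[
\aexc(w)=\#\{\,j\in[n]: w(j)<j\,\},
\]
the usual (strict) deficiency statistic. One can sanity-check this against Figure~\ref{fig:typeA-waexc}, where the six positions $j$ with $w(j)<j$ produce exactly the six antiexceedance values marked in red. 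In particular $\aexc$ is one of the Eulerian statistics, and its distribution on $\mathfrak S_n$ agrees with that of descents and excedances.

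Next I would recall Foata's fundamental transformation $\Phi\colon\mathfrak S_n\to\mathfrak S_n$: given $w$, write its disjoint-cycle decomposition with each cycle led by its largest element and the cycles arranged in increasing order of these leaders, then erase parentheses to read the result as the one-line notation of $\Phi(w)$. This is a bijection, and the property I need is that it carries descents to excedances, namely $\mathrm{exc}(\Phi(w))=\mathrm{des}(w)$. The verification is short: in the canonical cycle word each cycle-leader is a left-to-right maximum (hence contributes an ascent when it is read), so the excedances of $\Phi(w)$ are precisely the within-cycle steps, and these are exactly the descents recorded by $w$. It then remains to reconcile $\aexc$ with $\mathrm{exc}$ on the input side and $\mathrm{des}$ with $\mathrm{asc}$ on the output side. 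Conjugation by the longest element $w_\circ\colon i\mapsto n+1-i$ gives $\mathrm{exc}(w_\circ w w_\circ)=\aexc(w)$, since $(w_\circ w w_\circ)(i)>i$ is equivalent to $w(n+1-i)<n+1-i$; and reversal of the one-line word $\rho(v)=v(n)\,v(n-1)\cdots v(1)$ gives $\mathrm{asc}(\rho(v))=\mathrm{des}(v)$.

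Composing these three elementary bijections with Foata's transformation, I would set $\Psi(w)=\rho\bigl(\Phi^{-1}(w_\circ w w_\circ)\bigr)$, which is a bijection of $\mathfrak S_n$ satisfying
\[
\aexc(w)=\mathrm{exc}(w_\circ w w_\circ)=\mathrm{des}\bigl(\Phi^{-1}(w_\circ w w_\circ)\bigr)=\mathrm{asc}(\Psi(w)).
\]
Restricting to $\aexc(w)=n-2$ then gives the claimed mapping onto permutations with $n-2$ ascents, and combined with the preceding lemma recovers the count $2^n-n-1$ (the Eulerian number $A(n,n-2)=A(n,1)$). The one delicate point, and the only place I expect to have to be careful, is the exact bookkeeping of the Foata correspondence: the strict-versus-weak distinction for excedances and the choice of canonical cycle convention permute the roles of excedances/deficiencies and ascents/descents, so I must fix conventions so that the three auxiliary symmetries chain up correctly onto ``ascents.'' I would also remark that, by instead leading each cycle with its \emph{smallest} element and ordering cycles decreasingly, one can arrange Foata's map to send deficiencies directly to ascents, absorbing the $w_\circ$-conjugation and reversal into the bijection itself.
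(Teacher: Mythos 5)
The paper does not actually print a proof here (it declares the lemma folklore), so you were reconstructing the standard argument, and your overall strategy --- rewrite antiexceedances as deficiencies, then chain Foata's fundamental transformation with the symmetries $w\mapsto w_\circ w w_\circ$ and word reversal --- is exactly the route the lemma's statement points to. The reformulation $\aexc(w)=\#\{j\in[n]: w(j)<j\}$ is correct, as are the identities $\mathrm{exc}(w_\circ w w_\circ)=\aexc(w)$ and $\mathrm{asc}(\rho(v))=\mathrm{des}(v)$. The problem is the middle link. With the convention you fix (each cycle led by its largest element, cycles in increasing order of leaders), the identity $\mathrm{exc}(\Phi(w))=\mathrm{des}(w)$ is false: for $w=4321$ the canonical cycle form is $(3\;2)(4\;1)$, so $\Phi(w)=3241$, and $\mathrm{exc}(3241)=2$ while $\mathrm{des}(4321)=3$. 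Your justification conflates positions in the word $\Phi(w)$ with the letters occupying them: excedances of $\Phi(w)$ compare a letter to its \emph{position}, which the cycle structure of $w$ does not control, whereas descents of the word compare consecutive \emph{letters}, which it does. As a result the composite $\Psi(w)=\rho\bigl(\Phi^{-1}(w_\circ w w_\circ)\bigr)$ is not statistic-preserving: $w=4132$ has $\aexc(w)=2$, but $w_\circ w w_\circ=3241$, $\Phi^{-1}(3241)=4321$, and $\Psi(w)=1234$ has three ascents.

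The repair is the one you half-anticipate in your closing remark, and it shortens the construction. With your stated convention the true identity is $\mathrm{des}(\Phi(w))=\#\{j: w(j)<j\}=\aexc(w)$: inside a cycle $(m\;c_2\;\cdots\;c_l)$ the letter $p$ is followed in the word by a smaller letter exactly when $w(p)<p$ (the leader $m$ always is, since $c_2<m=w^{-1}(c_2)$ gives the deficiency at $m$ via $w(m)=c_2<m$; an intermediate $c_i$ is iff $c_{i+1}=w(c_i)<c_i$; the last letter $c_l$ never is, because the next letter is the next, larger, leader and $w(c_l)=m>c_l$), and the final letter of the whole word corresponds to a non-deficiency. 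Hence $w\mapsto\rho(\Phi(w))$ already satisfies $\mathrm{asc}(\rho(\Phi(w)))=\aexc(w)$, with no conjugation by $w_\circ$ and no inverse of $\Phi$ needed; restricting it to the fiber $\aexc=n-2$ proves the lemma. (Since the lemma is only used to transfer the count $2^n-n-1$, equidistribution of the two Eulerian statistics would suffice, but the specific map you wrote down does not witness it, so the middle identity does need to be corrected rather than merely hedged.)
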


We omit the proof of this lemma since it is well-known. The bijection transfers antiexceedances to ascents. 

Combining the two previous lemmas, we have the following classical result. 

\begin{corollary}\label{cor:antiexc}
The number of permutations $w \in \mathfrak{S}_n$ with exactly $n-2$ antiexceedances is given by $2^n - n - 1$. 
\end{corollary}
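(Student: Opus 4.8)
The plan is to deduce this count directly by composing the two preceding lemmas, since together they already contain all of the combinatorial substance. The Foata lemma furnishes a bijection from the set of permutations of $\mathfrak{S}_n$ with exactly $n-2$ antiexceedances onto the set of permutations with exactly $n-2$ ascents, and the ascent-counting lemma evaluates the latter set as having cardinality $2^n - n - 1$. So the proof is essentially a one-line composition: apply the Foata bijection to transport the antiexceedance count to the ascent count, then read off the value $2^n - n - 1$.

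Concretely, first I would note that because the Foata bijection carries the antiexceedance statistic of $w$ to the ascent statistic of its image, a permutation has exactly $n-2$ antiexceedances if and only if its image has exactly $n-2$ ascents; hence the two sets are in bijection and share the same cardinality. Then I would invoke the ascent-counting lemma, whose inductive argument (splitting on the position of $n$ in one-line notation and solving the recurrence $f(n) = f(n-1) + 2^{n-1} - 1$) gives the value $2^n - n - 1$. Combining the two yields the claim immediately.

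Since both inputs are supplied, there is no genuine obstacle at the level of the corollary itself; the content sits one level below. If one wanted a self-contained argument, the crux would be the equidistribution of antiexceedances and ascents --- equivalently, that antiexceedances are Eulerian. This is transparent once one observes that the number of antiexceedances of $w$ equals the number of excedances of $w^{-1}$, and excedances are classically equidistributed with descents, hence with ascents; the Foata bijection is precisely an explicit witness to this equidistribution, which is why the second lemma can be quoted without proof. The only bookkeeping to verify is that the statistic value $n-2$ is matched with $n-2$ across the bijection, which is automatic from its statistic-preserving nature.
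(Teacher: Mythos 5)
Your proposal is correct and is exactly the paper's argument: the corollary is obtained by composing the Foata-bijection lemma (transporting the antiexceedance count to the ascent count) with the preceding lemma counting permutations with $n-2$ ascents as $2^n - n - 1$. The paper states this in one line ("Combining the two previous lemmas"), and your additional remarks on equidistribution are consistent with, though not needed for, that deduction.
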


\begin{proof}[Proof of second part of Theorem~\ref{thm:type-A}]
By Corollary~\ref{cor:antiexc}, there are $2^n - n - 1$ permutations with exactly $n-2$ antiexceedances. Of these $2^n -n - 1$ permutations $w$ we need to count the subset of them such that $\cyc(\pi_T(w)) = 1$. If $\pi_T(w)$ has more than one cycle and has exactly $n-2$ antiexceedances, then it necessarily has exactly two cycles. In particular, there exists some indices $i,j$ with $i \leq j$ such that $(i \enspace i+1 \enspace \cdots \enspace j$) form one cycle in $\pi_T(w)$ while $(j+1 \enspace \cdots \enspace n \enspace 1 \enspace \cdots \enspace i-1)$ forms the other. Note however that when $i = j$, we have that $\cyc(\pi_T(w)) = 1$ and such $w$ are actually $A$-splittable. In total, the number of permutations $w$ such that $\cyc(\pi_T(w)) = 1$ is given by $2^n - n - 1 - \binom{n}{2} + n = 2^n - \binom{n}{2} - 1$, where the $-\binom{n}{2}$ comes from choosing the indices $i,j$ and the $+n$ comes from when $i = j$.  
\end{proof}

\section{Type B}\label{sec:B}

The \emph{hyperoctahedral group} $B_n$ is the group of permutations $w$ of $\pm[n]$ such that $w(\overline{i}) = \overline{w(i)}$. In this section, we replace negative signs with bars $\overline{\bullet}$ for ease of notation. We fix the Coxeter element $c = (\bar{1} \enspace \bar{2} \enspace \cdots \enspace \bar{n} \enspace 1 \enspace 2 \enspace \cdots \enspace n)$ of $B_n$. 

The argument in this case is similar to that in type A. It is of interest whether we can prove the result for type B via the operation of folding (as described for instance in \cite[Section 4.3]{DN21}) while using the result for type A as a black-box. Here we content ourselves with modifying the argument in the previous section appropriately. 

In fact, the combinatorial model for $B_n$ can be vaguely thought of as the combinatorial model for $A_{2n}$ ``with $180^{\circ}$ rotation symmetry''. More precisely, let us briefly expound upon the combinatorial model for $NC(B_n,c)$. Label $2n$ equidistant points on a circle with the numbers $\overline{1}, \overline{2}, \ldots, \overline{n}, 1, 2, \ldots, n$ in clockwise order. 

\begin{definition}
A set partition $\pi$ of $\pm[n]$ is a \emph{type-$B_n$ noncrossing set partition} if the resulting diagram from drawing the convex hull of each block is invariant under $180^{\circ}$ rotation and the convex hulls of different blocks do not intersect each other.
\end{definition}

As in the case for type $A$ Coxeter groups, for a given type-$B_n$ noncrossing set partition, we may form an element of $NC(B_n, c)$ through the conversion of each block to a cycle by ordering the elements of the block cyclically in clockwise order around the circle. The product of these cycles is a noncrossing partition in $NC(B_n, c)$ and every noncrossing partition in $NC(B_n, c)$ arises uniquely in this way. Similarly to the case of type $A$ Coxeter groups, the partial order on $NC(B_n,c)$ corresponds to the reverse refinement order on type-$B_n$ noncrossing set partitions. The noncrossing projection of $w \in B_n$ is similarly the smallest noncrossing partition $v \in NC(B_n,c)$ such that every cycle in $w$ is contained in a cycle in $v$ (as sets).

\begin{figure}
    \centering
    \begin{tikzpicture}
\begin{scope}
\equib[2cm]{6};
\draw [-{Stealth[length=2mm, width=1mm]}, line width = 0.7pt] (N2) -- (N3);
\draw [-{Stealth[length=2mm, width=1mm]}, line width = 0.7pt] (N3) -- (N4);
\draw [-{Stealth[length=2mm, width=1mm]}, line width = 0.7pt] (N4) -- (N2);

\draw [-{Stealth[length=2mm, width=1mm]}, line width = 0.7pt] (P2) -- (P3);
\draw [-{Stealth[length=2mm, width=1mm]}, line width = 0.7pt] (P3) -- (P4);
\draw [-{Stealth[length=2mm, width=1mm]}, line width = 0.7pt] (P4) -- (P2);

\draw [-{Stealth[length=2mm, width=1mm]}, line width = 0.7pt] (P1) -- (N5);
\draw [-{Stealth[length=2mm, width=1mm]}, line width = 0.7pt] (N5) -- (N1);
\draw [-{Stealth[length=2mm, width=1mm]}, line width = 0.7pt] (N1) -- (P5);
\draw [-{Stealth[length=2mm, width=1mm]}, line width = 0.7pt] (P5) -- (P1);

\draw [{Stealth[length=2mm, width=1mm]}-{Stealth[length=2mm, width=1mm]}, line width = 0.7pt] (N6) -- (P6);

\foreach \i in {1,2}{
    \fill[red](N\i) circle (0.07 cm);
}

\foreach \i in {1,2,5,6}{
    \fill[red](P\i) circle (0.07 cm);
}
\end{scope}

\begin{scope}[shift = {(6,0)}]
\equib[2cm]{6};
\draw [-{Stealth[length=2mm, width=1mm]}, line width = 0.7pt] (N2) -- (N3);
\draw [-{Stealth[length=2mm, width=1mm]}, line width = 0.7pt] (N3) -- (N4);
\draw [-{Stealth[length=2mm, width=1mm]}, line width = 0.7pt] (N4) -- (N2);

\draw [-{Stealth[length=2mm, width=1mm]}, line width = 0.7pt] (P2) -- (P3);
\draw [-{Stealth[length=2mm, width=1mm]}, line width = 0.7pt] (P3) -- (P4);
\draw [-{Stealth[length=2mm, width=1mm]}, line width = 0.7pt] (P4) -- (P2);

\draw [-{Stealth[length=2mm, width=1mm]}, line width = 0.7pt] (N1) -- (N5);
\draw [-{Stealth[length=2mm, width=1mm]}, line width = 0.7pt] (N5) -- (N6);
\draw [-{Stealth[length=2mm, width=1mm]}, line width = 0.7pt] (N6) -- (P1);
\draw [-{Stealth[length=2mm, width=1mm]}, line width = 0.7pt] (P1) -- (P5);
\draw [-{Stealth[length=2mm, width=1mm]}, line width = 0.7pt] (P5) -- (P6);
\draw [-{Stealth[length=2mm, width=1mm]}, line width = 0.7pt] (P6) -- (N1);
\end{scope}

\begin{scope}[shift = {(12,0)}]
\equib[2cm]{6};
\draw [{Stealth[length=2mm, width=1mm]}-{Stealth[length=2mm, width=1mm]}, line width = 0.7pt] (P1) -- (P6);
\draw [{Stealth[length=2mm, width=1mm]}-{Stealth[length=2mm, width=1mm]}, line width = 0.7pt] (N1) -- (N6);
\draw [{Stealth[length=2mm, width=1mm]}-{Stealth[length=2mm, width=1mm]}, line width = 0.7pt] (P5) -- (N5);
\end{scope}

\path (P2) edge [loop below,line width = 0.7pt] (P2);
\path (P3) edge [loop below,line width = 0.7pt] (P3);
\path (P4) edge [loop below,line width = 0.7pt] (P4);
\path (N2) edge [loop above,line width = 0.7pt] (N2);
\path (N3) edge [loop above,line width = 0.7pt] (N3);
\path (N4) edge [loop above,line width = 0.7pt] (N4);

\foreach \i in {1}{
    \fill[red](N\i) circle (0.07 cm);
}

\foreach \i in {1,5}{
    \fill[red](P\i) circle (0.07 cm);
}
\end{tikzpicture}
    \caption{The permutation $w:= (2 \enspace 3 \enspace 4)(\bar{2} \enspace \bar{3} \enspace \bar{4})(6 \enspace \bar{6}) (1 \enspace \bar{5} \enspace \bar{1} \enspace 5) \in B_6$ (left), its non-crossing projection $\pi_T(w) = (2 \enspace 3 \enspace 4)(\bar{2} \enspace \bar{3} \enspace \bar{4}) (1 \enspace 5 \enspace 6 \enspace \bar{1} \enspace \bar{5} \enspace \bar{6})$ (middle), and its image under $\Popt$ given by $\Popt(w) = (1 \enspace 6)(\bar{1} \enspace \bar{6})(5 \enspace \bar{5}) (2)(3)(4)(\bar{2})(\bar{3})(\bar{4})$ (right)}
    \label{fig:B-waexc}
\end{figure}

Define a total order on $\pm[n]$ by 
\[ \overline{1} \prec \overline{2} \prec \cdots \prec \overline{n} \prec 1 \prec 2 \prec \cdots \prec n. \]
In this setting, we have a similar notion of antiexceedance as in Definition~\ref{def:aexc-b}, paralleling that in $A_n$, where we replace $<$ with $\prec$. 

We have an equivalent of Lemma~\ref{lem:DN-A} in the setting of $B_n$. An illustration of this result is in Figure~\ref{fig:B-waexc}, where the antiexceedances are the vertices marked in red.

\begin{lemma}[{\cite[Theorem 5.6]{DN21}}]\label{lem:DN-B}
The following properties relating antiexceedances and $\Popt$ hold:
\begin{enumerate}
    \item[(a)] The element $c^{-1}$ has $2n-1$ antiexceedances. 
    \item[(b)] Every element of $B_n$ other than $c^{-1}$ has at most $2n-2$ antiexceedances.
    \item[(c)] For every $w \in B_n$, we have $\aexc(\Popt(w)) = \aexc(w) - \cyc(\pi_T(w))$. 
\end{enumerate}
\end{lemma}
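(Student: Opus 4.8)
The plan is to deduce Lemma~\ref{lem:DN-B} from its type-$A$ counterpart, Lemma~\ref{lem:DN-A}, by exploiting the folding of $A_{2n-1}$ onto $B_n$ (see \cite[Section 4.3]{DN21}) rather than re-running a circle-model computation. Concretely, let $\phi\colon \pm[n]\to[2n]$ be the order isomorphism carrying $\bar{1}\prec\cdots\prec\bar{n}\prec 1\prec\cdots\prec n$ to $1<2<\cdots<2n$ (so $\phi(\bar k)=k$ and $\phi(k)=n+k$), and let $\widehat{w}=\phi\, w\, \phi^{-1}\in\mathfrak{S}_{2n}$ be the image of $w\in B_n$. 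Since $\phi$ is an order isomorphism, a position $j$ satisfies $w(j)\prec j$ if and only if $\widehat{w}(\phi(j))<\phi(j)$, so $\phi$ induces a bijection on antiexceedance positions and $\aexc(w)=\aexc(\widehat{w})$. Moreover $\widehat{\,\cdot\,}$ is an injective group homomorphism and sends the type-$B$ Coxeter element $c=(\bar1\ \cdots\ \bar n\ 1\ \cdots\ n)$ to $\widehat{c}=(1\ 2\ \cdots\ 2n)$, the standard Coxeter element of $A_{2n-1}$. Thus everything in sight is compatible with folding, provided we also match $\pi_T$ and $\cyc$.

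For parts (a) and (b) this is immediate. We have $\widehat{c^{-1}}=\widehat{c}^{\,-1}$, so Lemma~\ref{lem:DN-A}(a) applied in $\mathfrak{S}_{2n}$ gives $\aexc(c^{-1})=\aexc(\widehat{c}^{\,-1})=2n-1$. For part (b), if $w\in B_n$ had $\aexc(w)=2n-1$, then $\widehat w$ would have $2n-1$ antiexceedances, so Lemma~\ref{lem:DN-A}(b) forces $\widehat w=\widehat{c}^{\,-1}$, and injectivity of folding gives $w=c^{-1}$. (These facts can also be read off directly: the $\prec$-maximum $n$ guarantees that the position $w^{-1}(n)$ is never an antiexceedance, and the signed symmetry $w(\bar{i})=\overline{w(i)}$ forces a \emph{second} non-antiexceedance at $w^{-1}(\bar n)$ unless $w^{-1}(n)$ is negative, which is the mechanism that pins down $c^{-1}$.)

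The substance is part (c), and here the only thing to verify is that the noncrossing projection commutes with folding, i.e.\ $\widehat{\pi_T(w)}=\pi_T(\widehat w)$ (projections taken with respect to $c$ and $\widehat c$ respectively). Granting this, $\cyc$ matches automatically, since $\widehat{\,\cdot\,}$ merely relabels elements and therefore preserves cycle type, whence $\cyc(\pi_T(w))=\cyc(\pi_T(\widehat w))$; and since $\widehat{\Popt(w)}=\widehat w\cdot\widehat{\pi_T(w)}^{-1}=\Popt(\widehat w)$, Lemma~\ref{lem:DN-A}(c) yields
\[
\aexc(\Popt(w))=\aexc(\Popt(\widehat w))=\aexc(\widehat w)-\cyc(\pi_T(\widehat w))=\aexc(w)-\cyc(\pi_T(w)),
\]
as desired.

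The crux, and the step I expect to be the main obstacle, is thus the identity $\widehat{\pi_T(w)}=\pi_T(\widehat w)$. In the combinatorial models this asserts that the minimal type-$B_n$ noncrossing partition whose blocks contain the cycles of $w$ maps to the minimal type-$A$ noncrossing partition of $[2n]$ whose blocks contain the cycles of $\widehat w$. One inclusion is free from minimality; for the reverse I would argue by symmetry and uniqueness of the join. Because $\widehat w$ is invariant under the central $180^\circ$ rotation $\rho$, the type-$A$ closure $\pi_T(\widehat w)$ is also $\rho$-invariant (otherwise $\rho$ applied to it would produce a distinct closure of the same rank, contradicting uniqueness of the minimum), hence is itself a centrally symmetric, i.e.\ type-$B_n$, noncrossing partition containing the cycles of $w$, forcing it to dominate $\pi_T(w)$. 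A self-contained alternative, matching the treatment the paper adopts elsewhere, is to bypass folding and re-run the type-$A$ argument directly in the $B_n$ circle model, where the $180^\circ$-symmetric blocks occur in antipodal pairs together with at most one central symmetric block; the transfer fact that a value $r$ leaves $\Aexc$ under $\Popt$ exactly when $w^{-1}(r)$ is the $\prec$-largest entry of a nonsingleton cycle of $\pi_T(w)$ then gives (c) by counting one lost antiexceedance per nonsingleton cycle, after verifying that no new antiexceedances are created.
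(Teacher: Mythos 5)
Your proposal is correct, but it takes a genuinely different route from the paper: the paper does not prove Lemma~\ref{lem:DN-B} at all, citing it as \cite[Theorem 5.6]{DN21}, where it is established by a direct computation in the type-$B_n$ circle model parallel to the type-$A$ argument for Lemma~\ref{lem:DN-A}. You instead deduce it from Lemma~\ref{lem:DN-A} via the folding $A_{2n-1}\to B_n$, which is precisely the kind of argument the paper muses about in its outline (``It is interesting to ask if there is a direct way to do enumeration via folding'') but does not carry out. Your reduction is sound: the conjugation $\widehat{\,\cdot\,}$ is an order-compatible group embedding sending $c$ to $(1\ 2\ \cdots\ 2n)$, so (a) and (b) are immediate, and (c) reduces to $\widehat{\pi_T(w)}=\pi_T(\widehat w)$, which you correctly identify as the crux. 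Your symmetry argument for that identity works because the paper characterizes both projections as the \emph{minimum} noncrossing partition whose blocks contain the cycles of the permutation; the one step worth making explicit is \emph{why} the $180^\circ$ rotation $\rho$ fixes $\pi_T(\widehat w)$: since $\rho$ is conjugation by $\widehat c^{\,n}$, it is an automorphism of the lattice $NC(\mathfrak S_{2n},\widehat c)$ permuting the set of noncrossing partitions lying above all cycles of $\widehat w$, hence it fixes the unique minimum of that set; ``uniqueness of the minimum'' alone, without noting that $\rho$ acts on the relevant poset order-isomorphically, does not quite close the loop. What each approach buys: the direct computation in \cite{DN21} is self-contained and generalizes the bookkeeping (which value leaves $\Aexc$ at each step) that this paper later reuses in its type-$B$ and type-$D$ arguments, whereas your folding argument is shorter, reuses the type-$A$ result as a black box, and makes transparent why the type-$B$ statement is formally identical to the type-$A$ one with $n$ replaced by $2n$.
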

In the setting of $B_n$, recall Definition~\ref{def:b-split} that an element $w \in B_n$ is \emph{$B$-splittable} if it contains exactly $2n-2$ antiexceedances and $\cyc(\pi_T(w)) = 1$. We recall Theorem~\ref{thm:type-B}.

\typeB*

The proof of the first part of Theorem~\ref{thm:type-B} is analogous to that for type $A$. This is because the only condition that we need to verify is that when $w \in B_n$ is $B$-splittable then $\cyc(\pi_T(\Popt^i(w))) = 1$ for all $i \geq 0$. We can run the exact same proof as that for the first part of Theorem~\ref{thm:type-A} where we can reindex and consider $w \in B_n$ as a permutation of $[2n]$ and apply Theorem~\ref{thm:type-A} on $[2n]$. However, because of the $180^{\circ}$-rotation symmetry of elements in $B_n$, we need to be slightly more careful when we count the number of $B$-splittable elements. 

Combining Theorem~\ref{thm:type-B} with \cite[Theorem 5.1]{DN21}, we prove \cite[Conjecture 5.8]{DN21}. 

\begin{corollary}[{\cite[Conjecture 5.8]{DN21}}]
The number of elements of $B_n$ that require exactly $2n-2$ or $2n-1$ iterations of $\Popt$ to reach the identity is $2^n - n$. 
\end{corollary}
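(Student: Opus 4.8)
The plan is to deduce this corollary directly from Theorem~\ref{thm:type-B} together with the coincidental-type result \cite[Theorem 5.1]{DN21}, the only subtlety being a careful translation between ``number of iterations needed to reach $e$'' and ``size of the forward orbit.''

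First I would pin down that correspondence. For $u \in B_n \setminus \{e\}$ the non-crossing projection $\pi_T(u)$ contains at least one non-singleton cycle, so $\cyc(\pi_T(u)) \geq 1$ and Lemma~\ref{lem:DN-B}(c) gives $\aexc(\Popt(u)) = \aexc(u) - \cyc(\pi_T(u)) < \aexc(u)$. Hence along the trajectory $w, \Popt(w), \Popt^2(w), \ldots$ the antiexceedance count strictly decreases until the identity (which has $\aexc = 0$) is reached, after which it is fixed. Consequently the iterates $w, \Popt(w), \ldots, \Popt^{k-1}(w)$ are pairwise distinct and distinct from $e$ whenever $k$ is the least index with $\Popt^k(w) = e$; thus ``$w$ requires exactly $k$ iterations to reach $e$'' is equivalent to $|O_{\Popt}(w)| = k+1$. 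In particular, requiring exactly $2n-1$ iterations is the same as $|O_{\Popt}(w)| = 2n$, and requiring exactly $2n-2$ iterations is the same as $|O_{\Popt}(w)| = 2n-1$; these two conditions are mutually exclusive.

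Next I would count each class. Since $B_n$ is of coincidental type with Coxeter number $h = 2n$, \cite[Theorem 5.1]{DN21} applies: the maximal forward-orbit size is $h = 2n$ and the unique forward orbit attaining it is $O_{\Popt}(c^{-1})$. Because a forward orbit is strictly shrinking until it reaches $e$, it is determined by its starting element, so exactly one element of $B_n$, namely $c^{-1}$, satisfies $|O_{\Popt}(w)| = 2n$, i.e.\ requires the full $2n-1$ iterations. By Theorem~\ref{thm:type-B}, the elements with $|O_{\Popt}(w)| = 2n-1$ are precisely the $B$-splittable permutations, of which there are $2^n - n - 1$.

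Adding the two disjoint counts gives $1 + (2^n - n - 1) = 2^n - n$, as claimed. The argument introduces no new combinatorics: the entire content sits in Theorem~\ref{thm:type-B} and the cited coincidental-type theorem, so the only things to get right are the off-by-one bookkeeping and the justification that forward orbits consist of distinct elements, which I expect to be the sole (minor) obstacle.
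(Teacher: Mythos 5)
Your proposal is correct and matches the paper's (implicit) argument exactly: the paper derives this corollary by combining Theorem~\ref{thm:type-B} (giving the $2^n-n-1$ elements needing exactly $2n-2$ iterations) with \cite[Theorem 5.1]{DN21} (giving $c^{-1}$ as the unique element needing $2n-1$ iterations), and adding. Your extra care with the orbit-size/iteration-count bookkeeping via the strict decrease of $\aexc$ under $\Popt$ is sound and fills in a detail the paper leaves unstated.
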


\begin{proof}[Proof of second part of Theorem~\ref{thm:type-B}]

First, note that if $w \in B_n$ is $B$-splittable and has more than one cycle then it contains exactly two cycles. Next, observe that in order for $\cyc(\pi_T(w)) = 1$, if $w$ has two cycles, then for each of the cycles $\mathscr{C}$ there exists an index $i$ such that $i, \overline{i} \in \mathscr{C}$. This is simply because otherwise $\mathscr{C}_1$ would consist of $1, 2, \ldots, n $ and $\mathscr{C}_2$ would consist of $\overline{1}, \overline{2}, \ldots, \overline{n}$ and it is evident that for any such element $w$, we have that $\cyc(\pi_T(w)) = 2$. Consequently, for each cycle $\mathscr{C}$ in $w \in B_n$, there exists $\overline{i} \in \mathscr{C}$ such that $w^{-1}(\overline{i}) = j$ for some $j \in [n]$. By definition of $B_n$, we also necessarily have that $w(\overline{j}) = i$. As such, the two elements that are not antiexceedances are precisely given by $w(\overline{j}_1) = i_1$ and $w(\overline{j}_2) = i_2$, where we have one such antiexceedance in each of the two cycles. Every other element of $w$ is therefore an antiexccedance. Observe further that $\min \{ \overline{j}_1, \overline{j}_2 \} = 1$. 

The previous paragraph allows us to count $B$-splittable elements as follows. Note that every element of $w \in B_n$ can be written in a ``single-line notation'' where we only keep track of $w(i)$ for $i \in \{ \overline{n}, \overline{n-1}, \ldots, \overline{1} \}$, which in turn specifies $w(i)$ since $w(i) = \overline{w(\overline{i})}$. For instance, $1\enspace 4 \enspace \overline{3} \enspace 5 \enspace \overline{2}$ corresponds to the element $w \in B_5$ such that $w(\overline{1}) = 1$, $w(\overline{2}) = 4$, $w(\overline{3}) = \overline{3}$, $w(\overline{4}) = 5$, $w(\overline{5}) = \overline{2}$, $w(1) = \overline{1}$, $w(2) = \overline{4}$, $w(3) = 3$, $w(4) = \overline{5}$, $w(5) = 2$. We can map every element of $B_n$ into an element of $\mathfrak{S}_n$ via this single-line notation as follows: first remove the bar $\overline{\bullet}$ on any element of $w$ in the single-line notation that has a bar and then consider the resulting permutation as a single-line permutation representation of an element of $\mathfrak{S}_n$. Continuing with the example from earlier, $1\enspace 4 \enspace \overline{3} \enspace 5 \enspace \overline{2} \in B_5$ maps to $1\enspace4\enspace3\enspace5\enspace2 \in \mathfrak{S}_5$. By our observations in the previous paragraph, after this map we get a permutation of $\mathfrak{S}_n$ with at most two elements that are not antiexceedances. 

If the output of our map is $w' \in \mathfrak{S}_n$ with exactly one element that is not an antiexceedance, then it is necessarily $(n \enspace n-1 \enspace \cdots \enspace 1)$ by Lemma~\ref{lem:DN-A}. In particular, the element $w \in B_n$ we started off with would necessarily be of the form $n \enspace \overline{1} \enspace \overline{2} \enspace \cdots \enspace \overline{i-1} \enspace i \enspace \overline{i+1} \enspace \cdots \enspace \overline{n-1}$ in single-line notation by the observations we made in the first paragraph of the proof. But then $\cyc(\pi_T(w)) = 2$ because $w$ is made up of two cycles given by $(\overline{1} \enspace \overline{2} \enspace \cdots \enspace \overline{i} \enspace i+1 \enspace i+2 \enspace \cdots \enspace n)$ and $(1 \enspace 2 \enspace \cdots \enspace i \enspace \overline{i+1} \enspace \cdots \enspace \overline{n})$. In other words, if we started with $w \in B_n$ being $B$-splittable then this case is not possible. 

In the other case, we obtain $w' \in \mathfrak{S}_n$ with exactly two elements that are not antiexceedances, and we claim that we can uniquely recover the element $w \in B_n$ that we started with and that $\cyc(\pi_T(w)) = 1$. Consequently, the number of $B$-splittable elements in $B_n$ is exactly given by $2^n - n - 1$. To prove the claim, suppose $w'$ is such that $w'(i) = x_i$ and that $x_j > j$ for $j \neq 1$. The observation in the first paragraph of the proof implies that $w(\overline{1}) = x_1$ and $w(\overline{i}) = \overline{x_i}$ for $i \neq j$ while $w(\overline{j}) = x_j$. Note that if $w$ is made up of two cycles, then as sets the two cycles are $\{ x_1, x_2, \ldots, x_{\ell}, \overline{x_1}, \overline{x_2}, \ldots, \overline{x_{\ell}}\}$ and $\{ y_1, y_2, \ldots, y_{n - \ell}, \overline{y_1}, \overline{y_2}, \ldots, \overline{y_{n-\ell}}\}$ which then easily implies the desired conclusion of $\cyc(\pi_T(w)) = 1$. 
\end{proof}

\section{Type D}\label{sec:D}

The type $D_n$ case is rather subtle, because the combinatorial model we work with of the non-crossing partition lattice (based on \cite{AR04}) is more complex. We begin with a quick recap on this combinatorial model of $D_n$ as permutations on $\pm[n]$. The Coxeter group $D_n$ is the group of permutations $w \colon \pm[n] \to \pm[n]$ such that $w(\overline{i}) = \overline{w(i)}$ (where $\overline{\bullet}$ replaces the negative sign, for simplicity of notation) for all $i \in \pm[n]$ and $\left|\{ i \in [n]: w(i) < 0\} \right|$ is even. The corresponding set $T$ of reflections in $D_n$ is generated by elements of the form $(i \enspace j)(\overline{i} \enspace \overline{j})$ for some $i,j \in \pm[n]$ with $i \neq j$. In this section, we fix the Coxeter element $c = (\overline{1} \enspace \overline{2} \enspace \cdots \enspace \overline{n-1} \enspace 1 \enspace 2 \enspace \cdots \enspace n-1)(\overline{n} \enspace n)$. For a cycle $\mathcal{C} = (c_1 \enspace c_2 \enspace \cdots \enspace c_r)$, we will denote $\overline{\mathcal{C}}$ to be the cycle obtained by negating every element of $\mathcal{C}$; that is, we define $\overline{\mathcal{C}} = (\overline{c_1} \enspace \overline{c_2} \enspace \cdots \enspace \overline{c_r})$.

\begin{definition}
A cycle $\mathcal{C}$ is \emph{balanced} if $\mathcal{C} = \overline{\mathcal{C}}$. 
\end{definition}

An easy observation is that any element of $D_n$ has an even number of balanced cycles.

In \cite{AR04}, Athanasiadis and Reiner give an explicit bijection between type $D_n$ noncrossing set partitions and $\mathrm{NC}(D_n, c)$. In the remainder of this section, we abuse notation and freely interchange between noncrossing set partitions and their corresponding permutations. 

We give a brief description of the noncrossing partitions in type $D$ al\'a Athanasiadis and Reiner. Label $2n-2$ equidistant points on a circle with the numbers $\overline{1}, \overline{2}, \ldots, \overline{n-1}, 1, 2 \ldots, n-1$ in clockwise order. Next place an additional point at the center of the circle and label it with $n$ and $\overline{n}$. 

\begin{definition}
A set partition $\pi$ of $\pm[n]$ is a \emph{type-$D_n$ noncrossing set partition} if:
\begin{enumerate}
    \item [(1)] For every block $B$ of $\pi$, $\overline{B} = \{ \overline{i}: i \in B\}$ is also a block of $\pi$.
    \item [(2)] Different blocks of $\pi$ have disjoint interiors of their respective convex hulls.
    \item [(3)] The set $\{ n, \overline{n} \}$ is not a block of $\pi$.
\end{enumerate}
\end{definition}

\begin{definition}
A \emph{zero block} of a set partition $\pi$ of $\pm[n]$ is a block $B$ of $\pi$ such that $B = \overline{B}$.
\end{definition}

Observe that if $\pi$ is a type-$D$ noncrossing set partition, then $\pi$ has at most one zero block and when it does contain a zero block then the zero block properly contains $\{n, \overline{n} \}$. 

Now, given $\pi$ which is a type-$D$ noncrossing set partition, we describe how to recover the corresponding non-crossing partition in $NC(D_n, c)$, following \cite{AR04}. For each block $B$ of $\pi$ which is not zero block, form a cycle by reading the elements of $B$ in clockwise order around the boundary of the convex hull of $B$. If $\pi$ has a zero block $Z$, then form a cycle by reading the elements of $Z\setminus \{n, \overline{n} \}$ in clockwise order around the boundary of the convex hull of $Z$, and also form the cycle $\{n, \overline{n} \}$.

The following lemma from \cite{AR04} is helpful for figuring out the cycle structure of noncrossing projections of elements. 

\begin{lemma}[{\cite{AR04}}]\label{lem:AR-proj}
Let $w \in D_n$, and consider $i,j \in \pm [n]$ with $i \not \in \{j, \overline{j} \}$. We have $(i \enspace j)( \overline{i} \enspace \overline{j}) \leq_T w$ if and only if $i$ and $j$ are in the same cycle of $w$, or $i$ and $j$ are in different balanced cycles of $w$.
\end{lemma}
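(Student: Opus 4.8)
The plan is to reduce the statement to a clean geometric criterion for when a reflection lies below $w$ in absolute order, and then to read off the cycle condition from an explicit description of the fixed space of $w$ in the reflection representation. Throughout I identify $D_n$ with its action on $\mathbb{R}^n$ by signed permutation matrices, writing $e_{\bar k} = -e_k$, and I use the standard facts (Carter; Brady--Watt \cite{BW08}) that $\ell_T(u) = \dim \operatorname{im}(u-1) = \operatorname{codim}\operatorname{Fix}(u)$ and that $\operatorname{Mov}(ab) \subseteq \operatorname{Mov}(a) + \operatorname{Mov}(b)$, where $\operatorname{Mov}(u) = \operatorname{im}(u-1) = \operatorname{Fix}(u)^{\perp}$.

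First I would establish the criterion: for a reflection $t$ with hyperplane $H_t = \operatorname{Fix}(t)$ and root $\alpha_t$, one has $t \leq_T w$ if and only if $\operatorname{Fix}(w) \subseteq H_t$, equivalently $\alpha_t \in \operatorname{Mov}(w)$. Using $t = t^{-1}$ and $\ell_T(t)=1$, recall that $t \leq_T w$ means exactly $\ell_T(tw) = \ell_T(w)-1$. For the backward direction, if $\alpha_t \in \operatorname{Mov}(w)$ then $\operatorname{Mov}(tw) \subseteq \langle \alpha_t\rangle + \operatorname{Mov}(w) = \operatorname{Mov}(w)$, so $\ell_T(tw) \leq \ell_T(w)$; since $\det$ forces $\ell_T(tw) \not\equiv \ell_T(w) \pmod 2$ and the triangle inequality gives $\ell_T(w) \leq 1 + \ell_T(tw)$, this pins down $\ell_T(tw)=\ell_T(w)-1$. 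The forward direction is the dual dimension count. This step is short but is the logical backbone.

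Next I would describe $\operatorname{Fix}(w)$ in terms of the cycle decomposition of $w$ on $\pm[n]$. The structural fact is that, writing $|w|$ for the induced permutation of $[n]$, a cycle of $|w|$ has positive sign exactly when it lifts to a pair of mutually negative (non-balanced) cycles of $w$, and negative sign exactly when it lifts to a single balanced cycle. Hence $\operatorname{Fix}(w)$ has as a basis the vectors $v_C = \sum_{a \in C}\operatorname{sgn}(a)\,e_{|a|}$, one for each non-balanced cycle pair $\{C,\overline{C}\}$, while balanced cycles contribute nothing. I would record the two facts that drive the case check: a non-balanced cycle contains at most one of $\{k,\bar k\}$ for each index $k$, so $(v_C)_{|k|} \in \{0,\pm 1\}$ is nonzero precisely when index $k$ occurs in $C \cup \overline{C}$; and an index lying in a balanced cycle occurs in no non-balanced cycle. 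Computing $\alpha_t = e_{|i|} - \operatorname{sgn}(i)\operatorname{sgn}(j)\,e_{|j|}$ and $H_t = \{x : x_{|i|} = \operatorname{sgn}(i)\operatorname{sgn}(j)\,x_{|j|}\}$, the containment $\operatorname{Fix}(w)\subseteq H_t$ becomes the requirement $(v_C)_{|i|} = \operatorname{sgn}(i)\operatorname{sgn}(j)\,(v_C)_{|j|}$ for every basis vector $v_C$. I would then split into cases by the location of $i$ and $j$: same cycle (balanced or not); different non-balanced cycles (including the subcase $i \in C$, $\bar j \in C$); one balanced and one non-balanced; and two distinct balanced cycles. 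In the ``same cycle'' and ``two balanced cycles'' cases every coefficient matches (in the balanced situations because $|i|,|j|$ never appear in a non-balanced $C$, so both sides vanish), giving $t \leq_T w$; in the remaining cases some $v_C$ has exactly one of the two coefficients nonzero, so containment fails. This reproduces precisely the claimed dichotomy.

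I expect the main obstacle to be the balanced-cycle bookkeeping: the nonobvious content of the lemma is that balanced cycles are invisible to $\operatorname{Fix}(w)$, which is exactly why two indices sitting in different balanced cycles force $t \leq_T w$ even though they lie in distinct cycles. Keeping the signs in $v_C$ and in $\alpha_t$ consistent across all sign patterns of $i,j \in \pm[n]$ is the part that requires genuine care; everything else is routine once the criterion and the basis $\{v_C\}$ are in place. A more hands-on alternative, closer to the Athanasiadis--Reiner model, avoids the geometric criterion entirely and instead tracks how left multiplication by $t$ merges or splits the cycles of $w$ on $\pm[n]$, using $\ell_T(w) = n - p(w)$ with $p(w)$ the number of non-balanced cycle pairs; the same case division then shows $p(tw) = p(w)+1$ in exactly the claimed cases.
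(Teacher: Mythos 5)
The paper does not prove this lemma at all: it is quoted verbatim as background from Athanasiadis--Reiner \cite{AR04} (it also appears as a stated fact in \cite{DN21}), so there is no in-paper argument to compare against. Your proposal supplies a genuine self-contained proof, and the route you take -- the Brady--Watt/Carter criterion $t \leq_T w \iff \operatorname{Fix}(w) \subseteq \operatorname{Fix}(t) \iff \alpha_t \in \operatorname{Mov}(w)$, followed by the explicit basis $v_C = \sum_{a\in C} e_a$ of $\operatorname{Fix}(w)$ indexed by non-balanced cycle pairs -- is correct and is essentially the standard way to establish such statements type-by-type. The reduction to the fixed-space criterion is sound (the parity-plus-subadditivity argument for the backward direction and the dimension count for the forward direction both check out, using $\ell_T = \dim\operatorname{Mov}$ for Weyl groups), and the key structural point, that balanced cycles contribute nothing to $\operatorname{Fix}(w)$ while each non-balanced pair $\{C,\overline{C}\}$ contributes one dimension, is exactly what makes the ``different balanced cycles'' clause come out right. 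One small imprecision: your closing summary asserts that in every failing case some $v_C$ has \emph{exactly one} of the two coefficients nonzero. That is false in the subcase you yourself flag, namely $i \in C$ and $\overline{j} \in C$ with $C$ non-balanced: there $(v_C)_{|i|} = \operatorname{sgn}(i)$ and $(v_C)_{|j|} = -\operatorname{sgn}(j)$ are both nonzero, and containment fails because of the sign mismatch with the required relation $(v_C)_{|i|} = \operatorname{sgn}(i)\operatorname{sgn}(j)(v_C)_{|j|}$, not because a coefficient vanishes. Your setup already contains everything needed to handle this correctly, so it is a one-line fix, but the case analysis should state the two distinct failure modes separately.
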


A consequence of this lemma is the following.

\begin{lemma}[{\cite[Lemma 6.4]{DN21}}]\label{lem:DN-6.4}
Let $u \in D_n$ and $v = \pi_T(u)$. Consider distinct $i,j \in \pm[n-1]$. If $i$ and $j$ are in the same cycle of $u$, then they are in the same cycle of $v$. 
\end{lemma}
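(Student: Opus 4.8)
The plan is to argue entirely through the Athanasiadis--Reiner criterion of Lemma~\ref{lem:AR-proj}: once in the direction ``cycle of $u$ $\Rightarrow$ reflection below $u$'', and once in the direction ``reflection below $v$ $\Rightarrow$ cycle of $v$'', where $v = \pi_T(u)$. Before starting I would record the one structural fact about $v$ that drives everything. Since $v$ corresponds to a type-$D_n$ noncrossing set partition, it has at most one zero block, and every non-zero block $B$ satisfies $B \neq \overline{B}$ and so gives a cycle $\mathcal{C}$ with $\mathcal{C} \neq \overline{\mathcal{C}}$, i.e.\ a non-balanced cycle. Hence the only balanced cycles of $v$ are those produced by a zero block $Z$: the large balanced cycle $\mathcal{C}_Z$ read off from $Z \setminus \{n, \overline{n}\} \subseteq \pm[n-1]$, together with $(n \enspace \overline{n})$. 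In particular \emph{at most one balanced cycle of $v$ contains an element of $\pm[n-1]$}, namely $\mathcal{C}_Z$; this is the observation that rules out the unwanted alternative each time I apply the criterion to $v$.

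I would first treat the case $i \notin \{j, \overline{j}\}$. As $i$ and $j$ lie in a common cycle of $u$, Lemma~\ref{lem:AR-proj} gives $(i \enspace j)(\overline{i} \enspace \overline{j}) \leq_T u$, and since this reflection is one of the joinands of $\pi_T(u) = \bigvee_{t \leq_T u,\, t \in T} t$, it satisfies $(i \enspace j)(\overline{i} \enspace \overline{j}) \leq_T v$. Applying Lemma~\ref{lem:AR-proj} to $v$ now forces $i$ and $j$ to be in the same cycle of $v$ or in two different balanced cycles of $v$. The second alternative is impossible, because $i, j \in \pm[n-1]$ while at most one balanced cycle of $v$ meets $\pm[n-1]$. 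This settles the case.

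The case $j = \overline{i}$ is the main obstacle, because $(i \enspace \overline{i})$ is not a reflection of $D_n$ and Lemma~\ref{lem:AR-proj} cannot be invoked on the pair $(i, \overline{i})$ directly. Here the cycle $\mathcal{C}$ of $u$ through $i$ and $\overline{i}$ is balanced. My strategy is to locate a \emph{witness} $k \in \pm[n-1] \setminus \{i, \overline{i}\}$ that shares a cycle of $v$ with both $i$ and $\overline{i}$. By Lemma~\ref{lem:AR-proj}, both $(i \enspace k)(\overline{i} \enspace \overline{k})$ and $(\overline{i} \enspace k)(i \enspace \overline{k})$ lie below $u$ as soon as $k$ lies in $\mathcal{C}$ or in some other balanced cycle of $u$; such a $k$ exists whenever $\mathcal{C}$ has an element of $\pm[n-1]$ beyond $i, \overline{i}$, or some balanced cycle other than $\mathcal{C}$ does (and $u$ has an even, hence at least two, number of balanced cycles). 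Given a witness $k$, the first case applied to $(i,k)$ and to $(\overline{i}, k)$---both legitimate since $k \notin \{i, \overline{i}\}$---shows that $i$ and $\overline{i}$ each share the cycle of $v$ containing $k$, hence share a cycle of $v$. A witness fails to exist only in the single residual configuration where the sole balanced cycles of $u$ are $(i \enspace \overline{i})$ and $(n \enspace \overline{n})$.

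For that residual configuration I would apply Lemma~\ref{lem:AR-proj} to the pair $(i, n)$: since $i$ and $n$ then lie in different balanced cycles of $u$, both $(i \enspace n)(\overline{i} \enspace \overline{n})$ and $(i \enspace \overline{n})(\overline{i} \enspace n)$ lie below $u$ and hence below $v$. Feeding these into Lemma~\ref{lem:AR-proj} for $v$, suppose first that one of them gives ``different balanced cycles of $v$''; then $i$ lies in a balanced cycle of $v$, which must be $\mathcal{C}_Z$, and being balanced $\mathcal{C}_Z$ also contains $\overline{i}$. Otherwise both give ``same cycle'', so $i$, $n$ and $\overline{n}$ lie in one cycle $\mathcal{D}$ of $v$; since $\mathcal{D}$ contains $n$ and $\overline{n}$ it equals $\overline{\mathcal{D}}$ and is balanced, whence $\overline{i} \in \mathcal{D}$. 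In both cases $i$ and $\overline{i}$ share a cycle of $v$, which completes the proof.
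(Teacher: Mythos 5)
Your argument is correct, but note that the paper itself offers no proof to compare against: Lemma~\ref{lem:DN-6.4} is imported verbatim from \cite[Lemma 6.4]{DN21} as a black box. Judged on its own merits, your derivation from Lemma~\ref{lem:AR-proj} is sound. The structural observation that drives everything --- that a noncrossing partition $v \in NC(D_n,c)$ has balanced cycles only when it is zeroed, in which case they are exactly $\mathcal{C}_Z$ and $(n \enspace \overline{n})$, so at most one balanced cycle of $v$ meets $\pm[n-1]$ --- is exactly right and correctly rules out the ``different balanced cycles'' branch of the criterion each time. Your handling of the genuinely delicate case $j = \overline{i}$ (where $(i \enspace \overline{i})$ is not a reflection and the criterion cannot be applied to the pair directly) via a witness $k$, with the residual configuration resolved through the pair $(i,n)$, is complete; the parity argument showing a witness exists unless the balanced cycles of $u$ are exactly $(i \enspace \overline{i})$ and $(n \enspace \overline{n})$ is also correct. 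One small presentational wrinkle: when the witness $k$ lies in a balanced cycle of $u$ other than $\mathcal{C}$, the hypothesis of your ``first case'' (that $i$ and $k$ share a cycle of $u$) does not literally hold; what you are really reusing is only the second half of that case, namely the implication from $(i \enspace k)(\overline{i} \enspace \overline{k}) \leq_T v$ to ``same cycle of $v$.'' Since you establish separately that the reflection lies below $u$, and hence below the join $v$, in this situation as well, the argument goes through --- it would just be cleaner to isolate that implication as its own step rather than phrasing it as an application of the first case.
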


We will often utilize the above lemma in the following rephrased form.

\begin{lemma}[Rephrased Lemma~\ref{lem:DN-6.4}]\label{cor:DN-refinement}
Let $u \in D_n$ and $v = \pi_T(u)$. Then for every cycle $\mathcal{C} \in u$, there exists a cycle $\mathcal{C}' \in v$ such that $\mathcal{C} \cap \pm[n-1] \subset \mathcal{C}'$ where the containment here is as sets. 
\end{lemma}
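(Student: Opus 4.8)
The plan is to deduce this rephrasing directly from Lemma~\ref{lem:DN-6.4} by promoting its pairwise statement into a statement about an entire cycle, using the fact that the cycles of $v$ partition $\pm[n]$ and hence that ``lying in the same cycle of $v$'' is an equivalence relation on $\pm[n]$.

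First I would fix an arbitrary cycle $\mathcal{C}$ of $u$ and consider the set $\mathcal{C} \cap \pm[n-1]$, discarding the central labels $\pm n$, which play no role in the claim. If this set is empty the containment holds vacuously for any choice of $\mathcal{C}'$, and if it is a singleton $\{i\}$ we may simply take $\mathcal{C}'$ to be the (possibly singleton) cycle of $v$ containing $i$, which exists since the cycles of $v$ partition $\pm[n]$.

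For the remaining case, where $\mathcal{C} \cap \pm[n-1]$ has at least two elements, I would argue as follows. For any two distinct $i, j \in \mathcal{C} \cap \pm[n-1]$, the elements $i$ and $j$ lie in the same cycle $\mathcal{C}$ of $u$, so Lemma~\ref{lem:DN-6.4} applies and guarantees that $i$ and $j$ lie in a common cycle of $v$. Since this holds for every such pair, and since ``being in the same cycle of $v$'' is an equivalence relation whose classes are exactly the cycles of $v$, all elements of $\mathcal{C} \cap \pm[n-1]$ must lie in a single cycle $\mathcal{C}'$ of $v$. This yields $\mathcal{C} \cap \pm[n-1] \subseteq \mathcal{C}'$, as desired.

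There is no serious obstacle here: the substantive content is entirely furnished by Lemma~\ref{lem:DN-6.4}, and the only points requiring care are the reduction to pairs together with the appeal to transitivity of the equivalence relation induced by the cycle decomposition of $v$. The edge cases of an empty or singleton intersection should be dispatched explicitly so that the existence of a suitable $\mathcal{C}'$ is guaranteed in every situation.
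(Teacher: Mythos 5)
Your proof is correct and matches the paper's treatment: the paper gives no explicit argument for this rephrasing, regarding it as an immediate consequence of the pairwise statement in Lemma~\ref{lem:DN-6.4} together with the fact that the cycles of $v$ partition $\pm[n]$, which is exactly the reduction you carry out. The explicit handling of the empty and singleton cases is a harmless bonus.
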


We will also need the following characterization of covering relations in this non-crossing partition lattice for $D_n$. 

\begin{lemma}[{\cite[Section 3]{AR04}}]\label{lem:cover-Dn}
We have that $u$ covers $v$ in the non-crossing partition lattice for $D_n$ if and only if $v$ can be obtained from $u$ via the following operations:
\begin{itemize}
    \item [(i)] splitting the zero block of $u$ into the zero block of $v$ and also a pair of nonzero blocks,
    \item [(ii)] splitting a pair of nonzero blocks of $u$ into two such pairs for $v$, or
    \item [(iii)] splitting the zero block of $u$ into one pair of nonzero blocks for $v$. 
\end{itemize}
\end{lemma}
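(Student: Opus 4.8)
The plan is to lift the combinatorial statement to the underlying algebra: since $NC(D_n,c)=[e,c]$ is graded by the absolute length $\ell_T$, a cover $v \lessdot u$ is exactly the data of $v \leq_T u$ together with $v^{-1}u \in T$; equivalently $v = u t$ for a single reflection $t=(a\,b)(\overline a\,\overline b)$ with $\ell_T(v)=\ell_T(u)-1$. First I would record this reformulation and then ask, for a fixed $u$, which reflections $t$ produce a descent $v=ut$ inside the lattice. By Lemma~\ref{lem:AR-proj}, the admissible $t$ are exactly those whose support $\{a,b\}$ either lies in a single cycle of $u$ or is split between two distinct balanced cycles of $u$. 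The whole argument is then a translation of these two possibilities into the Athanasiadis--Reiner block model, keeping in mind that, under the bijection, a zero block corresponds to a balanced cycle on $\pm[n-1]$ together with the central cycle $(\overline n\, n)$, while every other block comes in an antipodal pair $\{B,\overline B\}$ realized as a pair of non-balanced cycles.

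Next I would carry out the case analysis. If $a,b$ lie in the same non-balanced cycle $C$, then the doubled reflection simultaneously splits $C$ into $C_1,C_2$ and $\overline C$ into $\overline{C_1},\overline{C_2}$, which is precisely operation (ii): one pair of nonzero blocks becomes two pairs. If $a,b$ lie in the same balanced cycle (the zero block), a direct computation with the cyclic word $(z_1\,\cdots\,z_k\,\overline{z_1}\,\cdots\,\overline{z_k})$ shows the cycle splits into a strictly smaller balanced cycle together with one antipodal pair $\{C,\overline C\}$, which is operation (i). Finally, the only balanced cycles present in an element of the lattice are the zero block and the central cycle $(\overline n\, n)$ (there is at most one zero block), so the case ``$a,b$ in distinct balanced cycles'' forces, say, $b=n$ with $a$ in the zero block; here $t$ fuses $(\overline n\,n)$ with the balanced cycle and cuts the result into a single antipodal pair, leaving no zero block, which is operation (iii). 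I would also check the reverse implications --- that each of (i), (ii), (iii) is realized by such a $t$ and preserves the defining conditions of a type-$D_n$ noncrossing set partition (central symmetry of the blocks, the noncrossing property of the convex hulls, and the prohibition of $\{n,\overline n\}$ as a block) --- so that the three operations exactly exhaust the covers.

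The step I expect to be the main obstacle is the \emph{rank bookkeeping} around balanced cycles and the central point. A single reflection raises the number of cycles by two in cases (i) and (ii), yet must drop $\ell_T$ by exactly one; the resolution is that balanced cycles occur in even number and contribute to $\ell_T$ differently from antipodal pairs, so that trading the two balanced cycles $\{Z',\,(\overline n\, n)\}$ for one non-balanced pair (case (iii)), or peeling a pair off a balanced cycle (case (i)), is in each instance a unit drop. Making this precise --- that is, verifying that each listed operation is a genuine atom of the graded poset and that no finer admissible refinement sits strictly between $u$ and $v$ --- is where the care lies; once the length function is pinned down on the block model, the grading of $NC(D_n,c)$ upgrades the local ``single reflection'' analysis to the desired global statement that (i)--(iii) are exactly the covering relations.
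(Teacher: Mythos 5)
The paper does not prove this lemma at all: it is imported verbatim from Athanasiadis--Reiner \cite[Section 3]{AR04}, so there is no in-paper argument to compare against. Your reconstruction follows the standard route and is essentially correct: a cover $v\lessdot u$ in the graded interval $[e,c]$ is exactly $v=ut$ for a reflection $t=(a\ b)(\overline a\ \overline b)$ with $t\leq_T u$; Lemma~\ref{lem:AR-proj} identifies the admissible $t$; and your three cases (same non-balanced cycle, same balanced cycle, two distinct balanced cycles) do translate, respectively, into operations (ii), (i), and (iii), using that an element of $NC(D_n,c)$ has at most two balanced cycles, namely the zero-block cycle and $(\overline n\ n)$, and that $a,b$ in the cycle $(\overline n\ n)$ is excluded since $b=\overline a$ is not allowed.

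Two points in your sketch should be nailed down rather than gestured at. First, the rank bookkeeping is cleaner than you suggest: by Carter's lemma $\ell_T(w)=\operatorname{codim}(V^w)$, and for a signed permutation $\dim V^w$ equals the number of antipodal pairs $\{C,\overline C\}$ of non-balanced cycles (balanced cycles contribute nothing), so $\ell_T(w)=n-p(w)$. Each of (i), (ii), (iii) increases $p$ by exactly one, so each is a unit drop in rank; there is no separate ``contribution of balanced cycles'' to reconcile. Second, in the same-balanced-cycle case you must actually verify that after $(z_i\ z_j)$ splits the cycle $(z_1\ \cdots\ z_k\ \overline{z_1}\ \cdots\ \overline{z_k})$ into two arcs, both $\overline{z_i}$ and $\overline{z_j}$ lie in the \emph{same} arc (this holds because $1\le i<j\le k$ forces $j-i<k$), so that the second transposition splits again rather than re-merges; this is what produces a smaller balanced cycle plus one antipodal pair, and it also shows operation (i) can never strand $\{n,\overline n\}$ as a block on its own. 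With those two verifications written out, your argument is a complete and correct proof of the cited statement.
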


Recall that there are two types of non-crossing set partitions, where either the non-crossing set partition contains a zero block, or it does not. We will call the non-crossing set partitions that contain a zero block \emph{zeroed}. As before, we will define an analogue of splittable elements in this setting. To do so, we need to introduce an additional piece of notation. For a permutation $w \in D_n$, we say that $\sgn(w(i)) = 1$ if $w(i) \in [n]$ and $\sgn(w(i)) = -1$ if $w(i) \in -[n]$.

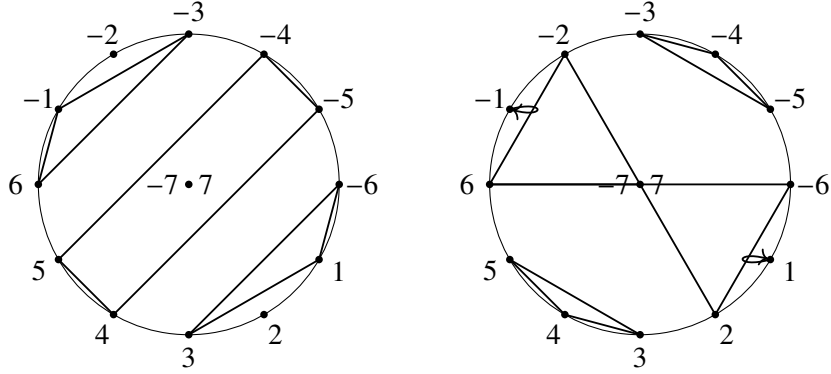
\begin{figure}[!htp]
    \centering
    \begin{tikzpicture}
\begin{scope}
\equid[2]{6}
\draw[-, line width = 0.7pt] (N1) -- (N3);
\draw[-, line width = 0.7pt] (N3) -- (P6);
\draw[-, line width = 0.7pt] (P6) -- (N1);

\draw[-, line width = 0.7pt] (P1) -- (P3);
\draw[-, line width = 0.7pt] (P3) -- (N6);
\draw[-, line width = 0.7pt] (N6) -- (P1);

\draw[-, line width = 0.7pt] (N4) -- (N5);
\draw[-, line width = 0.7pt] (N5) -- (P4);
\draw[-, line width = 0.7pt] (P4) -- (P5);
\draw[-, line width = 0.7pt] (P5) -- (N4);
\end{scope}
\begin{scope}[shift={(6,0)}]
\equid[2]{6}
\path (N1) edge [loop left,line width = 0.7pt] (N1);
\path (P1) edge [loop right,line width = 0.7pt] (P1);

\draw[-, line width = 0.7pt] (N3) -- (N4);
\draw[-, line width = 0.7pt] (N4) -- (N5);
\draw[-, line width = 0.7pt] (N5) -- (N3);
\draw[-, line width = 0.7pt] (P3) -- (P4);
\draw[-, line width = 0.7pt] (P4) -- (P5);
\draw[-, line width = 0.7pt] (P5) -- (P3);

\draw[-, line width = 0.7pt] (ori) -- (N6);
\draw[-, line width = 0.7pt] (N6) -- (P2);
\draw[-, line width = 0.7pt] (P2) -- (ori);
\draw[-, line width = 0.7pt] (ori) -- (N6);
\draw[-, line width = 0.7pt] (P6) -- (N2);
\draw[-, line width = 0.7pt] (N2) -- (ori);
\draw[-, line width = 0.7pt] (ori) -- (P6);
\end{scope}
\end{tikzpicture}
    \caption{The noncrossing set partition on the left is \emph{zeroed}, while the one on the right is not.}
    \label{fig:noncrossing-D}
\end{figure}

In this section, our goal is to enumerate and characterize elements of $D_n$ with $\Popt$ forward orbits of length $2n-2$. Before we proceed, we begin by discussing several properties of elements of $D_n$ with length $2n-2$ forward orbits. 

\begin{lemma}[{\cite[Lemma 6.5]{DN21}}]\label{lem:DN-6.5}
Suppose $j \in \pm[n-1]$ is such that $w^{-1}(j) \not \in \{ \overline{n}, n\}$. Then $(\Popt^{2n-4}(w))^{-1}(j)$ is either $j$ or the predecessor of $j$. 
\end{lemma}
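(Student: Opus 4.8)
The plan is to first determine the global shape of $\Popt^{2n-4}(w)$ and then follow the single point $j$ backward through the orbit. By \cite[Theorem 6.1]{DN21} every $w \in D_n$ satisfies $\Popt^{2n-3}(w) = e$, so setting $u := \Popt^{2n-4}(w)$ we have $\Popt(u) = u \cdot \pi_T(u)^{-1} = e$, which forces $u = \pi_T(u) \in NC(D_n,c)$. Hence $u$ is itself a noncrossing partition, and in its circular diagram each nonsingleton block is traversed clockwise; therefore $u^{-1}(j) = j$ exactly when $j$ is a fixed point of $u$, and otherwise $u^{-1}(j)$ is the block-mate of $j$ lying immediately counterclockwise from $j$. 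Since the predecessor of $j$ is precisely $c^{-1}(j)$, the point immediately counterclockwise from $j$ on the whole circle, the lemma is equivalent to the assertion that, whenever $j$ is not fixed by $u$, the point $c^{-1}(j)$ lies in the same block of $u$ as $j$ --- once $c^{-1}(j)$ is a block-mate it is automatically the immediate one, since nothing can lie on the arc between two adjacent points.

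To access the block of $j$ I would track the backward trajectory $p_k := \Popt^k(w)^{-1}(j)$. From $\Popt^{k+1}(w)^{-1} = \pi_T(\Popt^k(w)) \cdot \Popt^k(w)^{-1}$ one gets the recursion $p_{k+1} = v_k(p_k)$, where $v_k := \pi_T(\Popt^k(w))$ is a noncrossing partition and so sends each point to the next point clockwise within its block. Thus $p_k$ moves monotonically clockwise and, since it ends at $p_{2n-3} = j$, it never overshoots $j$; moreover $p_{2n-4} = u^{-1}(j)$. The lemma now says precisely that this trajectory has advanced to at least the immediate predecessor $c^{-1}(j)$ by the penultimate step, i.e. the final step $p_{2n-4} \mapsto j$ covers only the single notch $c^{-1}(j) \mapsto j$. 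I would establish this by the monotonicity bookkeeping already developed for type $A$: exactly as in Lemma~\ref{claim:aexc-split}, the relative clockwise order of the preimages of distinct non-fixed points is preserved by every $v_k$, and as in Lemma~\ref{cor:DN-refinement} the blocks only merge (never split) along the orbit. Combining this with an antiexceedance count that strictly decreases at each nontrivial step (the type-$D$ counterpart of Lemma~\ref{lem:DN-A}(c), governed by $\cyc(\pi_T(\Popt^k(w)))$) and must vanish within the $2n-3$ available steps pins down the schedule of ``arrivals'' tightly enough to force each still-moving point to be exactly one notch from home at step $2n-4$.

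The hypothesis $w^{-1}(j) \notin \{\overline n, n\}$ is exactly what keeps this trajectory on the outer $(2n-2)$-gon and prevents $p_{2n-4}$ from landing on the central pair. If $p_0 = w^{-1}(j)$ were $n$ or $\overline n$, the trajectory would begin inside the exceptional $2$-cycle $(\overline n \enspace n)$ of $c$, and its penultimate position could be $\overline n$ or $n$, which is neither $j$ nor the circular predecessor $c^{-1}(j) \in \pm[n-1]$; excluding this case is precisely what makes $c^{-1}(j)$ the correct answer. I expect the main difficulty to be making the advancement/counting argument of the previous paragraph rigorous in type $D$, where --- unlike types $A$ and $B$ --- one must separately track balanced cycles and the at-most-one zero block. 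By Lemma~\ref{lem:cover-Dn} and Lemma~\ref{lem:DN-6.4} these features interact delicately with how projections merge blocks and with the central point, so the genuine work is to verify that no point $j$ with $w^{-1}(j) \notin \{n, \overline n\}$ is absorbed into the central pair before step $2n-3$, and that such a still-moving point can lag behind home by at most one notch once a single step remains.
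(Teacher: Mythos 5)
First, a point of reference: the paper does not prove this statement at all --- it is quoted verbatim from \cite[Lemma 6.5]{DN21} and used as a black box --- so there is no in-paper argument to compare against; your attempt has to stand on its own.

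Your skeleton is the right one: set $p_k := \Popt^k(w)^{-1}(j)$, observe $p_{k+1} = v_k(p_k)$ with $v_k := \pi_T(\Popt^k(w))$ noncrossing, and argue that the trajectory marches clockwise toward $j$ and must be within one notch of $j$ after $2n-4$ steps. But the two load-bearing steps are missing. First, the finishing move you propose --- an ``antiexceedance count that \ldots pins down the schedule of arrivals'' --- is not an argument: the paper records no type-$D$ analogue of Lemma~\ref{lem:DN-A}(c), and even granting one, a decreasing statistic does not by itself force a still-moving point to be exactly one notch from home. What actually closes the argument is a pigeonhole you gesture at but never state: as long as $p_k \neq j$ and $p_k \in \pm[n-1]$, the point $j$ lies in the same block of $v_k$ as $p_k$ (Lemma~\ref{lem:DN-6.4}), so $p_k \prec_j p_{k+1} \preceq_j j$ strictly; if $p_{2n-4} \neq j$ then $p_0 \prec_j \cdots \prec_j p_{2n-3} = j$ is a strictly increasing chain of $2n-2$ elements in the $(2n-2)$-element order $\prec_j$, hence exhausts it, forcing $p_{2n-4} = c^{-1}(j)$. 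Second, everything above --- including your reduction of the lemma to ``$c^{-1}(j)$ is a block-mate of $j$ in $u$'' --- presupposes that the trajectory never enters the central pair $\{n,\overline{n}\}$. A nonzero block of $v_k$ may contain $n$, in which case $v_k(p_k)$ can equal $n$, the monotonicity and the count both break, and $u^{-1}(j)$ can be $n$, which is neither $j$ nor $c^{-1}(j)$. You explicitly flag this as ``the genuine work'' and then do not do it; yet this is precisely what distinguishes the type-$D$ lemma from its type-$A$ and type-$B$ counterparts, and the hypothesis $w^{-1}(j) \notin \{n,\overline{n}\}$ only controls $p_0$, not the later $p_k$. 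Until that step is supplied (presumably via Lemma~\ref{lem:DN-zero} and the structure of blocks through the center), the proposal is a plan rather than a proof.
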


\begin{corollary}\label{cor:struc-D}
Suppose $w \in D_n$ is such that $\left| O_{\Popt}(w) \right| = 2n-2$. Let $m = w(n)$. If $m \not \in \{n, \overline{n} \}$, then $\Popt^{2n-4}(w)(m) \neq m$.  
\end{corollary}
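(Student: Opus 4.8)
The plan is to argue by contradiction and exploit Lemma~\ref{lem:DN-6.5} to propagate a hypothetical fixed point all the way around the boundary cycle. Write $w' := \Popt^{2n-4}(w)$. By \cite[Theorem 6.1]{DN21} we have $\Popt^{2n-3}(w) = e$, and since $\Popt(e) = e$ the orbit $O_{\Popt}(w)$ is exactly the list of iterates $w, \Popt(w), \ldots, \Popt^{2n-3}(w) = e$, comprising at most $2n-2$ distinct elements. Thus $|O_{\Popt}(w)| = 2n-2$ forces these iterates to be pairwise distinct, and in particular $w' \neq e$. Because $w \in D_n$ satisfies $w(n) = m$ and $w(\overline{n}) = \overline{m}$, the condition $w^{-1}(j) \in \{n, \overline{n}\}$ holds if and only if $j \in \{m, \overline{m}\}$; moreover $m \notin \{n, \overline{n}\}$ gives $m \in \pm[n-1]$. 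Suppose, toward a contradiction, that $w'(m) = m$.

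For every $j \in \pm[n-1] \setminus \{m, \overline{m}\}$ the hypothesis $w^{-1}(j) \notin \{n, \overline{n}\}$ of Lemma~\ref{lem:DN-6.5} is satisfied, so $(w')^{-1}(j)$ equals either $j$ or its predecessor $c^{-1}(j)$ along the boundary. I would use this to push the fixed point $m$ clockwise. Since negation is the half-rotation of the $2n-2$ boundary points while $c$ rotates by one step, we have $c^{n-1}(m) = \overline{m}$, so the points $m, c(m), \ldots, c^{n-2}(m)$ are distinct and none of $c(m), \ldots, c^{n-2}(m)$ lies in $\{m, \overline{m}\}$. Applying Lemma~\ref{lem:DN-6.5} to $j = c(m)$ gives $(w')^{-1}(c(m)) \in \{c(m), m\}$; but $w'(m) = m \neq c(m)$ excludes the value $m$, whence $w'(c(m)) = c(m)$. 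Iterating, once $c^{k-1}(m)$ is known to be fixed, Lemma~\ref{lem:DN-6.5} applied to $j = c^{k}(m)$ gives $(w')^{-1}(c^{k}(m)) \in \{c^{k}(m), c^{k-1}(m)\}$ and the second option is ruled out by $w'(c^{k-1}(m)) = c^{k-1}(m)$. Hence $w'$ fixes each of $m, c(m), \ldots, c^{n-2}(m)$.

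Finally, the defining relation $w'(\overline{x}) = \overline{w'(x)}$ shows that $w'$ also fixes $\overline{m}, \overline{c(m)}, \ldots, \overline{c^{n-2}(m)}$, which are exactly the remaining boundary points $c^{n-1}(m), \ldots, c^{2n-3}(m)$. Therefore $w'$ fixes all of $\pm[n-1]$ pointwise, so it can only permute the central pair $\{n, \overline{n}\}$; together with $w'(\overline{n}) = \overline{w'(n)}$ this leaves only $w' = e$ or $w' = (n \enspace \overline{n})$. The first contradicts $w' \neq e$, while the second is impossible because $(n \enspace \overline{n})$ sends exactly one element of $[n]$ to a negative value and so violates the parity condition defining $D_n$. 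This contradiction gives $\Popt^{2n-4}(w)(m) \neq m$. I expect the main obstacle to be the bookkeeping that the hypothesis of Lemma~\ref{lem:DN-6.5} fails precisely on $\{m, \overline{m}\}$ together with the verification, via $c^{n-1}(m) = \overline{m}$, that the chain of fixed points runs exactly $n-1$ steps before it would meet $\overline{m}$; the closing parity argument that $(n \enspace \overline{n}) \notin D_n$ is then the clean final step.
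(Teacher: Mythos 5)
Your proof is correct and follows the same route as the paper: assume $\Popt^{2n-4}(w)(m)=m$, use Lemma~\ref{lem:DN-6.5} to force $\Popt^{2n-4}(w)=e$, and contradict the orbit length $2n-2$. The paper states this in a single (rather compressed) sentence, whereas you supply the details it leaves implicit — the clockwise propagation of the fixed point via the predecessor alternative, the half-rotation fact $c^{n-1}(m)=\overline{m}$, and the parity argument ruling out $(n\ \overline{n})$ — all of which are the right details to check.
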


\begin{proof}
If $m \not \in \{n, \overline{n} \}$, then we may apply Lemma~\ref{lem:DN-6.5} that $\Popt^{2n-4}(w)(n) = e$, which contradicts the assumed forward orbit length of $w$ under $\Popt$. 
\end{proof}

The first step is to show that we may restrict our attention to zeroed elements.

\begin{lemma}\label{lem:not-zeroed}
Let $w \in D_n$ be such that $\pi_T(w)$ is not zeroed. Then $\left| O_{\Popt}(w) \right| \leq 2n-3$. 
\end{lemma}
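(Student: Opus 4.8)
The plan is to prove the contrapositive. Since $\Popt^{2n-3}(w)=e$ for every $w\in D_n$ by \cite[Theorem 6.1]{DN21}, the forward orbit has length $2n-2$ precisely when $\Popt^{2n-4}(w)\neq e$, so it suffices to show that whenever $\pi_T(w)$ is not zeroed we have $\Popt^{2n-4}(w)=e$. I would organize the argument according to the value $m:=w(n)$, guided by the principle implicit in Lemma~\ref{lem:DN-6.5} and Corollary~\ref{cor:struc-D}: after $2n-4$ iterations the only elements that can still fail to be fixed are those fed through the center, namely $m=w(n)$ and $\overline m=w(\overline n)$. It is also convenient to record the equivalence (a direct check in the Athanasiadis--Reiner model) that $\pi_T(w)$ is zeroed if and only if $(n\ \overline n)$ is a cycle of $\pi_T(w)$, since any block of a type-$D_n$ noncrossing partition containing both $n$ and $\overline n$ is forced to be a zero block.

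First I would dispose of two special values of $m$. If $m=\overline n$, then $(n\ \overline n)$ is a balanced cycle of $w$, and as every element of $D_n$ has an even number of balanced cycles, $w$ has a further balanced cycle $\mathcal B\subseteq\pm[n-1]$; choosing $j\in\mathcal B$ and applying Lemma~\ref{lem:AR-proj} to the pairs $\{n,j\}$ and $\{n,\overline j\}$, which lie in different balanced cycles of $w$, shows $(n\ j)(\overline n\ \overline j)\le_T w$ and $(n\ \overline j)(\overline n\ j)\le_T w$, so $n,\overline n,j,\overline j$ all lie in one block of the join $\pi_T(w)$. That block is balanced and properly contains $\{n,\overline n\}$, hence is a zero block, making $\pi_T(w)$ zeroed; so the non-zeroed hypothesis already excludes $m=\overline n$. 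If instead $m=n$, then $n$ (and $\overline n$) is a non-balanced singleton of $w$, so by Lemma~\ref{lem:AR-proj} no reflection moving $n$ lies below $w$, whence $\pi_T(w)$ and therefore $\Popt(w)=w\cdot\pi_T(w)^{-1}$ fix $n$ and $\overline n$; iterating, the whole orbit stays inside the parabolic copy of $D_{n-1}$ on $\pm[n-1]$, on which $\Popt$ restricts to the pop-tsack operator of $D_{n-1}$ (all reflections and joins involved remain inside the parabolic, and the orbit structure is independent of the Coxeter element by \cite[Corollary 4.5]{DN21}). Applying \cite[Theorem 6.1]{DN21} in rank $n-1$ gives $\Popt^{2(n-1)-3}(w)=e$, hence $\Popt^{2n-4}(w)=e$, as required.

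The remaining and principal case is $m\in\pm[n-1]$. Here I would argue by contradiction, assuming $g:=\Popt^{2n-4}(w)\neq e$; then $\Popt(g)=\Popt^{2n-3}(w)=e$ forces $g$ to be a nontrivial noncrossing partition. Because $w^{-1}(m)=n$ and $w^{-1}(\overline m)=\overline n$, Lemma~\ref{lem:DN-6.5} applies to every $j\in\pm[n-1]$ other than $m$ and $\overline m$, giving $g^{-1}(j)\in\{j,\,c^{-1}(j)\}$; thus $g$ fixes or advances by a single $c$-step every outer point except possibly $m,\overline m$, while permuting the center pair among the remaining positions. Since $\pi_T(w)$ is not zeroed, $(n\ \overline n)$ is not a cycle of $\pi_T(w)$, so $n$ and $\overline n$ occupy conjugate \emph{non}-balanced cycles $\mathcal C\neq\overline{\mathcal C}$ of $\pi_T(w)$ and the first step is $\Popt(w)(n)=w\bigl(\pi_T(w)^{-1}(n)\bigr)$ with $\pi_T(w)^{-1}(n)\in\pm[n-1]$. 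Tracking the pair $\{n,\overline n\}$ together with $m,\overline m$ along the orbit, and using the refinement Lemma~\ref{cor:DN-refinement} to relate the cycles of each $\Popt^k(w)$ to those of its projection, I would show that $g$ must after all fix $m$, contradicting Corollary~\ref{cor:struc-D}.

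The main obstacle is exactly this last case. The difficulty is that the hypothesis constrains $\pi_T(w)$ at time $0$, whereas the object we must control, $g=\Popt^{2n-4}(w)$, lives at time $2n-4$, and one cannot simply propagate ``non-zeroed'' forward: the cycle structure of the projection can change abruptly along an orbit, as the type-$A$ example accompanying Figure~\ref{fig:A-cex} already demonstrates with $\cyc(\pi_T(\cdot))$ jumping from $1$ to $2$ in a single step. The crux is thus to prove that, in the absence of the cycle $(n\ \overline n)$ in $\pi_T(w)$, the center cannot inject the additional unit of displacement needed to keep $m$ moving for the full $2n-4$ iterations; establishing this quantitatively is where the fine combinatorics of the Athanasiadis--Reiner model, through Lemmas~\ref{lem:AR-proj} and~\ref{cor:DN-refinement}, must do the real work.
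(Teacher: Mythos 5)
Your proof has a genuine gap: the main case $m=w(n)\in\pm[n-1]$ is not actually proved. You set it up correctly (reduce via Corollary~\ref{cor:struc-D} to showing that $m$ becomes a fixed point by step $2n-4$) and you name the right tools, but you then explicitly defer the quantitative step as ``the main obstacle,'' which is precisely the content of the lemma. The paper closes this case as follows. Because covering relations in $NC(D_n,c)$ only split blocks (Lemma~\ref{lem:cover-Dn}) and the projections decrease along the orbit, \emph{none} of the $v_k=\pi_T(\Popt^k(w))$ is zeroed once $v_0$ is not --- so your worry that ``one cannot simply propagate non-zeroed forward'' is unfounded; zeroed-ness is monotone under refinement, unlike the cycle count in the type-$A$ example you cite. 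A non-zeroed $v_k$ has no balanced cycles, so by Lemma~\ref{lem:AR-proj} elements in the same cycle of $u_k$ lie in the same cycle of $v_k$; hence $u_{k+1}^{-1}(m)=v_k\bigl(u_k^{-1}(m)\bigr)$ advances strictly clockwise, giving $u_1^{-1}(m)\prec_m u_2^{-1}(m)\prec_m\cdots\prec_m m$. Lemma~\ref{lem:DN-zero} puts $\overline m\prec_m u_1^{-1}(m)$, so this chain has length at most $n-1$ and $m$ is fixed by step $n-1\le 2n-4$. This monotone ``clockwise march'' is the missing idea.

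Two smaller problems in the case $m=n$. First, ``no reflection moving $n$ lies below $w$, whence $\pi_T(w)$ fixes $n$'' is not valid as stated: a join of reflections all fixing $n$ can fail to fix $n$ (if $w$ has two balanced cycles in $\pm[n-1]$, the join acquires a zero block containing $n,\overline n$); you need the non-zeroed hypothesis to rule this out, and monotonicity again to keep it ruled out along the orbit. Second, the orbit does not restrict to the pop-tsack operator of $D_{n-1}$: the projections are joins in $NC(D_n,c)$, and the sub-poset of elements fixing $\pm n$ is the non-zeroed part of $NC(B_{n-1},\widetilde c)$, not $NC(D_{n-1},c')$ for any Coxeter element $c'$ of $D_{n-1}$ (the restriction of $c$ to $\pm[n-1]$ is a single balanced cycle, which does not even lie in $D_{n-1}$, and \cite[Corollary 4.5]{DN21} only permits changing the Coxeter element within a fixed group). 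The paper instead maps this case into $B_{n-1}$ and applies \cite[Theorem 5.1]{DN21}, using that $w|_{n-1}$ cannot equal the $B_{n-1}$ Coxeter element for parity-of-balanced-cycles reasons. Your handling of $m=\overline n$ is fine and essentially matches the paper's.
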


In the proof of the lemma, we will need the following result from \cite{DN21}.

\begin{lemma}[{\cite[Lemma 6.3]{DN21}}]\label{lem:DN-zero}
Suppose $v \in NC(D_n,c)$ and $k \in \pm[n-1]$ is such that $v(n) = k$. Let $Y$ be the set consisting of the next $n-1$ numbers after $k$ in clockwise order around the circle. Then none of the elements of $Y$ lie in the same cycle as $k$ in $v$. 
\end{lemma}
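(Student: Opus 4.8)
The plan is to argue directly in the Athanasiadis--Reiner model, showing that the block (cycle) of $v$ containing the central point occupies only one half of the circle, with $k$ sitting at one end of that half and $Y$ forming the complementary half.

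First I would reduce to the case that $v$ is not zeroed. If $v$ were zeroed, then the central point $\{n,\overline{n}\}$ would be split off as its own $2$-cycle, forcing $v(n)=\overline{n}$; since the hypothesis is $v(n)=k\in\pm[n-1]$, the partition $v$ has no zero block. In particular no block of $v$ is balanced, so the block $B$ containing the central label $n$ satisfies $B\neq\overline{B}$. This has two consequences. First, $B$ contains no antipodal pair $\{p,\overline{p}\}$, since $p\in B\cap\overline{B}$ together with the disjointness of distinct blocks would force $B=\overline{B}$; in particular $\overline{k}\notin B$. Second, the cycle of $v$ containing $k$ is exactly $B$, because $v(n)=k$, so the desired conclusion is equivalent to $Y\cap B=\emptyset$.

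The geometric heart is the claim that $B\cap\pm[n-1]$ lies in a single semicircle with $k$ at one endpoint. The central point lies on the boundary of $\mathrm{conv}(B)$: were it in the interior, then by the $180^{\circ}$ rotational symmetry $\mathrm{conv}(\overline{B})$ would also contain the (rotation-fixed) central point in its interior, contradicting that distinct blocks of a noncrossing partition have disjoint interiors. A configuration consisting of the central point together with points on the circle has the center as an extreme point precisely when those circle points lie in a closed half-plane through the center, i.e.\ within a closed semicircular arc; by the first consequence above this arc contains no antipodal pair, so it spans at most $n-1$ consecutive boundary positions and does not reach $\overline{k}$. Because the center is a vertex of $\mathrm{conv}(B)$, the two boundary points joined to it along the hull are exactly its successor $v(n)=k$ and predecessor $v^{-1}(n)$ in the cyclic reading, and these are the two extreme ends of the arc; thus $k$ is an endpoint, and every other element of $B\cap\pm[n-1]$ lies on one fixed side of $k$. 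Since $Y$ is the block of $n-1$ consecutive positions on the opposite side of $k$ (terminating at $\overline{k}$), it is exactly the complementary semicircle, so it meets no element of $B$; together with $\overline{k}\in Y$ being excluded from $B$, this gives $Y\cap B=\emptyset$, as desired. (One may alternatively phrase the final disjointness through Lemma~\ref{lem:AR-proj}: as $v$ has no balanced cycle, $y,k$ share a cycle iff $(k\ y)(\overline{k}\ \overline{y})\leq_T v$, and the semicircle computation rules this out for $y\in Y$.)

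The step I expect to be the main obstacle is pinning down the orientation: verifying that ``the next $n-1$ numbers after $k$ clockwise'' is indeed the semicircle on the side of $k$ \emph{away} from the rest of $B$, equivalently that $k=v(n)$ rather than $v^{-1}(n)$ is the endpoint of the arc from which $Y$ recedes. This must be reconciled against the fixed reading convention of the model---the same convention under which the chosen $c$ acts as a rotation of the boundary and whose reading recovers the cycle notation---and it is the only delicate bookkeeping point. The handling of the central point, which is a single geometric location wearing the two labels $n$ and $\overline{n}$ and serving as a shared vertex of $\mathrm{conv}(B)$ and $\mathrm{conv}(\overline{B})$, is what makes the ``center is an extreme point'' argument the crux; once the reading convention fixes which hull-edge at the center corresponds to $v(n)$, the side is determined and the remaining convex-geometry and antipodal-symmetry reasoning closes the proof.
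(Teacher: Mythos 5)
Your convex-geometry skeleton (reduce to non-zeroed $v$; no antipodal pair in the block $B \ni n$; the center is an extreme point of the convex hull of $B$ by the $180^{\circ}$-symmetry and disjoint-interiors conditions; hence the circle points of $B$ occupy at most $n-1$ consecutive positions with $k$ at an end) is sound, and since this paper contains no proof of the lemma---it is imported verbatim from \cite[Lemma 6.3]{DN21}---this is essentially the intended route. But the one step you explicitly deferred, pinning down \emph{which} side of $k$ the rest of $B$ occupies, is a genuine gap, and in fact it resolves \emph{against} the statement as transcribed here. Under the conventions fixed in this paper (labels $\overline{1},\ldots,\overline{n-1},1,\ldots,n-1$ in clockwise order; each block's cycle read clockwise around its convex hull), traversing the hull of $B$ clockwise visits the arc $p_1,p_2,\ldots,p_m$ of circle points of $B$ in clockwise order and then the center, so the cycle is $(p_1\ p_2\ \cdots\ p_m\ n)$ and $v(n)=p_1$ is the endpoint from which the arc extends \emph{clockwise}: the remaining cycle-mates of $k=v(n)$ lie among the next $n-2$ positions clockwise after $k$, i.e.\ inside the clockwise set $Y$ of the statement. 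A concrete witness is in the paper itself: in Figure~\ref{fig:double-cyst}, $v=\pi_T(w)=(1\ 2\ \cdots\ 9)(\overline{1}\ \cdots\ \overline{9})\in NC(D_9,c)$ has $v(9)=1$, and $2,\ldots,8$, all in the cycle of $k=1$, are among the next $8$ numbers clockwise after $1$. (An independent small check: $v=(1\ 2\ 4)(\overline{1}\ \overline{2}\ \overline{4})$ satisfies $\ell_T(v)+\ell_T(v^{-1}c)=2+2=4=\ell_T(c)$ in $D_4$, so $v\in NC(D_4,c)$, with $v(4)=1$ and the cycle-mate $2$ lying in the clockwise $Y$.)

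So the statement is true only with the orientation reversed: $Y$ must be the next $n-1$ numbers after $k$ in \emph{counterclockwise} order (equivalently, the clockwise statement holds for $k=v^{-1}(n)$). That corrected version is exactly what the paper actually uses: in the proof of Lemma~\ref{lem:not-zeroed}, the conclusion drawn from Lemma~\ref{lem:DN-zero} is $\overline{m}\prec_m u_1^{-1}(m)$, i.e.\ $k=v_0(n)$ lies in the clockwise interval from just after $\overline{m}$ up to $m$; since $m$ is a cycle-mate of $k$ there, this says cycle-mates sit within $n-2$ positions \emph{clockwise} of $k$---the counterclockwise-$Y$ statement, not the transcribed one. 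Your proof should therefore carry out the orientation computation you flagged (one explicit clockwise traversal of the hull polygon, as above, settles it) and then conclude for the counterclockwise $Y$; as written, your final sentence ``$Y$ is \ldots the complementary semicircle, so it meets no element of $B$'' asserts exactly the false direction, and no bookkeeping can rescue it for the clockwise $Y$. Everything else in your argument (the zeroed reduction, $\overline{k}\notin B$, the extreme-point claim via rotational symmetry, the semicircle and no-antipodal-pair bounds, and the identification of the hull-neighbors of the center with $v(n)$ and $v^{-1}(n)$) is correct.
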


\begin{proof}[Proof of Lemma~\ref{lem:not-zeroed}]
Suppose for the sake of contradiction that $w \in D_n$ is such that $\pi_T(w)$ is not zeroed but $\left| O_{\Popt}(w) \right| = 2n-2$. For $k \geq 0$, let $ u_k = \Popt^k(w)$ and $v_k = \pi_T(u_k)$. Let $ m = w(n)$. First, we claim that since $\pi_T(w)$ is not zeroed, $m \not \in \{n, \overline{n} \}$. If $m = n$, we have that $v_k(n) = n$ for all $k \geq 0$. This means that the action of $\Popt$ is only on $w|_{n-1}$, and we may effectively think of $w |_{n-1}$ as lying in $B_{n-1}$; given a noncrossing partition of $D_n$ that fixes $n$ and $\overline{n}$, we may uniquely map it to an induced noncrossing partition of $B_{n-1}$ by restricting the partition to $\pm[n-1]$. This means that the set of noncrossing partitions of $D_n$ that fix $n$ and $\overline{n}$ can be mapped to a subset of the noncrossing partitions of $B_{n-1}$. This in turn implies that the number of iterations of $\Popt$ needed for $w$ to reach $e$ is at most the number of iterations of $\Popt(\bullet, \widetilde{c})$ for $w|_{n-1}$ to reach $e$ where $\widetilde{c} = (\overline{1} \enspace \overline{2} \enspace \cdots \enspace \overline{n-1} \enspace 1 \enspace 2 \enspace \cdots \enspace n-1)$. Now, $w|_{n-1} \neq \widetilde{c}$ because $\widetilde{c}$ is a balanced cycle and if $w|_{n-1} = \widetilde{c}$, then $w$ would not have an even number balanced cycles, a contradiction to $w \in D_n$. But by \cite[Theorem 5.1]{DN21}, since $w|_{n-1} \neq \widetilde{c}$ and $\pi_T(w)$ is not zeroed, we have that the maximum possible length of the forward orbit of $w |_{n-1}$ is $2(n-1) -1 = 2n-3$, a contradiction.

If $m = \overline{n}$, then we also have that $w(\overline{n}) = n$ so that $(n \enspace \overline{n})$ is a balanced cycle in $w$. Since $w \in D_n$, there must exist another balanced cycle $\mathcal{C}$. Suppose $v \in \mathcal{C}$. By Lemma~\ref{lem:AR-proj}, it follows that $(v \enspace n)(\overline{v} \enspace \overline{n}) \leq_T w \leq_T \pi_T(w)$. Since $\pi_T(w)$ is not zeroed, it follows that $n$ and $\overline{n}$ are not in the same cycle and therefore $n$ and $v$ must be in the same cycle of $\pi_T(w)$ by applying Lemma~\ref{lem:AR-proj}. Since $\mathcal{C}$ is balanced, $\overline{v} \in \mathcal{C}$ as well. By considering $(v \enspace \overline{n})(\overline{v} \enspace n) \leq_T \pi_T(w)$, the same argument gives that $n$ and $\overline{v}$ must lie in the same cycle in $\pi_T(w)$ as well. However, this would then be a contradiction to Lemma~\ref{lem:DN-zero}. As such, we have shown that $m \not \in \{n, \overline{n} \}$.

Observe that if $u_t(j) = j$ for some $t$ then for all $s \geq t$ we have that $u_s(j) = j$ as well, since $(j)$ will be in a singleton cycle in $v_s$ for all $s \geq t$. Consequently, by Corollary~\ref{cor:struc-D}, to get a contradiction it suffices to prove that there exists some $s \leq 2n-4$ such that $u_s(w)(m) = m$. In fact, we will prove something stronger; we will show that there exists some $s \leq n-1$ such that $u_s(w)(m) = m$. 

To that end, we introduce some notation from \cite{DN21}. Define a total order $\prec_j$ that is defined by reading the numbers clockwise around the circle so that the last number read is $j$. Let $\mathcal{C}_i$ be the cycle in $v_i$ containing $j$. 

Since $\pi_T(w)$ is not zeroed, by Lemma~\ref{lem:cover-Dn} it follows that $v_i$ is not zeroed for all $i \geq 1$. Since there are no balanced cycles in $v_i$, by Lemma~\ref{lem:AR-proj} if $\alpha$ and $\beta$ are in the same cycle of $u_i$, then they must lie in the same cycle of $v_i$. Since $u_{i+1}^{-1} = v_iu_i^{-1}$ and $u_k^{-1}(m)$ and $m$ are in the same cycle of $u_k$, it follows that $u_{k+1}^{-1}(m)$ and $m$ are in the same cycle of $v_{k+1}$ and in particular that $u_{k+1}^{-1}(m)$ is the element after $u_k^{-1}(m)$ in the clockwise order on $\mathcal{C}$. Putting this together for $k \geq 1$ where we observe that since $m \in \pm[n-1]$, we get that
\begin{equation}\label{eq:2}
    u_1^{-1}(m) \prec_m u_2^{-1}(m) \prec_m \cdots \prec_m m
\end{equation}

By Lemma~\ref{lem:DN-zero}, we have that $\overline{m} \prec_m u_1^{-1}(m)$. Consequently, the sequence in (\ref{eq:2}) is of length at most $n-1$, and so there exists some $s \leq n-1$ such that $u_s(w)(m) = m$, as desired.
\end{proof}

Next, for a zeroed element $w \in D_n$, we associate with it a permutation in $B_{n-1}$. We will fix the Coxeter element $c = (\bar{1} \enspace \bar{2} \enspace \cdots \enspace \bar{n-1} \enspace 1 \enspace 2 \enspace \cdots \enspace n-1)$ in $B_{n-1}$. Let $a = w(n)$ and $b = w^{-1}(n)$. We will define $w|_{n-1} \in B_{n-1}$ by suitably defining $w^{-1}(a), w^{-1}(\bar{a})$ and $w(b), w(\bar{b})$ so that $w|_{n-1}$ has the same convex hull of its cycles as that of $w$. Let $w_1 \in D_n$ be defined by
\[ w_1(x) = \begin{cases} w(x) &\text{if }x \not \in \{b, \bar{b} \}, \\ a &\text{if } x = b, \\ \bar{a} &\text{if } x = \bar{b}. \end{cases} \]
Let $w_2 \in D_n$ be defined by 
\[ w_2(x) = \begin{cases} w(x) &\text{if }x \not \in \{b, \bar{b} \}, \\ \bar{a} &\text{if } x = b, \\ a &\text{if } x = \bar{b}. \end{cases} \]

Suppose the elements of the cycles $\mathcal{C}_1, \ldots, \mathcal{C}_k \in w$ make up the zero block of $\pi_T(w)$. Observe that if $ \mathcal{C}_i \cap \{n, \bar{n} \} = \emptyset$, then $\mathcal{C}_i \in w_1, w_2$ and if $ \mathcal{C}_i \cap \{n, \bar{n} \} \neq \emptyset$ then there exists some $i \in \{1,2 \}$ and a cycle $\mathcal{C} \in w_i$ such that $\mathcal{C}_i \subset \mathcal{C}$. Suppose some cycle $ \mathcal{C}_i \cap \{n, \bar{n} \} \neq \emptyset$ and the interior of the convex hulls of $\mathcal{C}_i$ and $\mathcal{C}_j$ is nonempty. Then for the corresponding $w_i$, the interior of the convex hulls of $\mathcal{C}$ and $\mathcal{C}_j$ is nonempty. It consequently follows that $\pi_T(w_i)$ is zeroed and its zero block contains the same elements as that of $\pi_T(w)$. It is clear that the nonzero blocks of $\pi_T(w_i)$ and $\pi_T(w)$ necessarily coincide. The observation that one of $w_1$ or $w_2$ has the same convex hull as $w$ allows us to make the following definition.

\begin{definition}
Given a zeroed permutation $w \in D_n$, define $w_1$ and $w_2$ as above. Define the \emph{$n-1$-projection} of $w$, written as $w|_{n-1} \in B_{n-1}$, to be either of $w_1$ or $w_2$ satisfying $\pi_T^B(w_i) = \pi_T^D(w)$. If both $w_1$ and $w_2$ have the desired property, choose the $w_i$ such that $w_i^{-1}(w(n)) = w^{-1}(n)$. Here the superscripts of the noncrossing projection indicates whether we are working with the set-up of $B_{n-1}$ or $D_n$.
\end{definition}

\begin{figure}[!htp]
    \centering
    \begin{tikzpicture}
\begin{scope}
\equidd[2]{6}
\draw[{Stealth[length=2mm, width=1mm]}-{Stealth[length=2mm, width=1mm]}, line width = 0.7pt] (N1) -- (P6);
\draw[{Stealth[length=2mm, width=1mm]}-{Stealth[length=2mm, width=1mm]}, line width = 0.7pt] (P1) -- (N6);
\draw[-{Stealth[length=2mm, width=1mm]}, line width = 0.7pt] (N2) -- (N3);
\draw[-{Stealth[length=2mm, width=1mm]}, line width = 0.7pt] (N3) -- (N4);
\draw[-{Stealth[length=2mm, width=1mm]}, line width = 0.7pt] (N4) -- (N5);
\draw[-{Stealth[length=2mm, width=1mm]}, line width = 0.7pt] (ori) -- (N2);
\draw[-{Stealth[length=2mm, width=1mm]}, line width = 0.7pt] (N5) -- (ori);

\draw[-{Stealth[length=2mm, width=1mm]}, line width = 0.7pt] (P2) -- (P3);
\draw[-{Stealth[length=2mm, width=1mm]}, line width = 0.7pt] (P3) -- (P4);
\draw[-{Stealth[length=2mm, width=1mm]}, line width = 0.7pt] (P4) -- (P5);
\draw[-{Stealth[length=2mm, width=1mm]}, line width = 0.7pt] (ori) -- (P2);
\draw[-{Stealth[length=2mm, width=1mm]}, line width = 0.7pt] (P5) -- (ori);
\end{scope}

\begin{scope}[shift={(6,0)}]
\equib[2]{6}
\draw[{Stealth[length=2mm, width=1mm]}-{Stealth[length=2mm, width=1mm]}, line width = 0.7pt] (N1) -- (P6);
\draw[{Stealth[length=2mm, width=1mm]}-{Stealth[length=2mm, width=1mm]}, line width = 0.7pt] (P1) -- (N6);
\draw[-{Stealth[length=2mm, width=1mm]}, line width = 0.7pt] (N2) -- (N3);
\draw[-{Stealth[length=2mm, width=1mm]}, line width = 0.7pt] (N3) -- (N4);
\draw[-{Stealth[length=2mm, width=1mm]}, line width = 0.7pt] (N4) -- (N5);
\draw[-{Stealth[length=2mm, width=1mm]}, line width = 0.7pt] (P5) -- (N2);
\draw[-{Stealth[length=2mm, width=1mm]}, line width = 0.7pt] (N5) -- (P2);
\draw[-{Stealth[length=2mm, width=1mm]}, line width = 0.7pt] (P2) -- (P3);
\draw[-{Stealth[length=2mm, width=1mm]}, line width = 0.7pt] (P3) -- (P4);
\draw[-{Stealth[length=2mm, width=1mm]}, line width = 0.7pt] (P4) -- (P5);

\end{scope}
\end{tikzpicture}
    \caption{On the left is a diagram representing either the permutation $w = (1 \enspace \bar{6}) (2 \enspace 3 \enspace 4 \enspace 5 \enspace 7) (\bar{1} \enspace 6)  (\bar{2} \enspace \bar{3} \enspace \bar{4} \enspace \bar{5} \enspace \bar{7}) \in D_7$ or $w' = (1 \enspace \bar{6}) (2 \enspace 3 \enspace 4 \enspace 5 \enspace \bar{7}) (\bar{1} \enspace 6)  (\bar{2} \enspace \bar{3} \enspace \bar{4} \enspace \bar{5} \enspace 7)$. On the right, we have $w|_{6} = w'|_{6} = (1 \enspace \overline{6})(\overline{1} \enspace 6)(2 \enspace 3 \enspace 4 \enspace 5 \enspace \overline{2} \enspace \overline{3} \enspace \overline{4} \enspace \overline{5})$ which should be thought of as an element of $B_6$.}
    \label{fig:proj}
\end{figure}

With this definition in place, we can define pre-$D$-splittable elements, that parallel the definition of splitable elements in the context of type A and type B Coxeter groups.

\begin{definition}
An element $w \in D_n$ is \emph{pre-$D$-splittable} if $\pi_T(w) = c$, and either $w = c^{-1}$ or $w|_{n-1}$ contains exactly $2n-4$ antiexceedances.
\end{definition}

It turns out in the setting of type D Coxeter groups, however, that an additional condition is needed in order for $w$ to have maximum $\Popt$ orbit of length $2n-2$. Define a total order $\prec$ on $\pm[n-1]$ by $\overline{1} \prec \cdots \prec \overline{n-1} \prec 1 \prec \cdots \prec n-1$. Define $\prec_j$ to be the order obtained by doing a cyclic shift on $\prec$ such that the maximum element of $\prec_j$ is $j$. 

\begin{definition}
An element $w \in D_n$ is \emph{$D$-splittable} if it is pre-$D$-splittable and there exists $x$ such that $x \prec_{w(n)} w^{-1}(n) $.
\end{definition}

With this notation in place, we can answer a question posed in \cite[Section 6]{DN21}, about the number of elements $w \in D_n$ with $|O_{\Popt}(w) | = 2n-2$. Note that the Coxeter number of $D_n$ is $2n-2$, so that we are enumerating elements with maximum forward orbit length. This is in contrast with the setting of type A and type B Coxeter groups, where \cite[Theorem 5.1]{DN21} shows that there exists a unique element with maximum forward orbit length. Of note is that the value put forth in \cite[Conjecture 6.6]{DN21} is incorrect; for instance there are 7 elements in $D_4$ with forward orbit length 6 as opposed to the conjectured value of 1 element as predicted by \cite[Conjecture 6.6]{DN21}. 

We recall Theorem~\ref{thm:type-D}.

\typeD*

Next, we show that if $w \in D_n$ is such that $\pi_T(w)$ is zeroed but $\pi_T(w) \neq c$ or $w|_{n-1}$ does not contain $2n-4$ antiexceedances then $w$ cannot satisfy $\left| O_{\Popt}(w) \right| = 2n-2$. A reduction that we make in this proof is to relate $\Popt(\bullet, c)$ on $D_n$ to $\Popt(\bullet, \widetilde{c})$ on $B_n$ where $\widetilde{c} = (\overline{1} \enspace \overline{2} \enspace \cdots \enspace \overline{n-1} \enspace 1 \enspace 2 \enspace \cdots \enspace n-1)$ so as to utilize our results from the previous section. To that end, we first establish an auxiliary result roughly relating the cycle structure of $\pi_T(\Popt^{D}(w),c)$ and $\pi_T(\Popt^{B}(w|_{n-1}), \widetilde{c})$ for elements $w \in D_n$ such that $\pi_T(w)$ is zeroed, where the superscripts are indicative of the setting in which we should think of $\Popt$ as acting. To more succinctly state the result, we introduce one more piece of notation.

\begin{definition}
Suppose $w |_{n-1}$ is obtained from $w \in D_n$. Let $t = w(n)$. The \emph{$n$-extension} of $\Popt^B(w |_{n-1})$ denoted $\prescript{n}{}\Popt(w |_{n-1})$ is defined to be the permutation where $\prescript{n}{}\Popt(w |_{n-1})^{-1}(j) = \Popt^B(w |_{n-1})^{-1}(j)$ for all $j \in \pm [n]$ such that $j \not \in \{ t, \overline{t}, n, \overline{n}\}$, while $\prescript{n}{}\Popt(w |_{n-1})^{-1}(t) = \overline{n}$, $\prescript{n}{}\Popt(w |_{n-1})^{-1}(\overline{t}) = n$, $\prescript{n}{}\Popt(w |_{n-1})^{-1}(n) = \Popt^B(w |_{n-1})^{-1}(\overline{t})$ and $\prescript{n}{}\Popt(w |_{n-1})^{-1}(\overline{n}) = \Popt^B(w |_{n-1})^{-1}(t)$.
\end{definition}

\begin{lemma}\label{lem:DtoB}
For any $w \in D_n$ such that $\pi_T(w)$ is zeroed and $w(n) \in \pm[n-1]$, $\Popt(w) = \prescript{n}{}\Popt(w|_{n-1})$. 
\end{lemma}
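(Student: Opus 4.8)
The plan is to prove the identity of permutations of $\pm[n]$ by evaluating the inverses of both sides on each point, since $\Popt(w)^{-1} = \pi_T(w)\,w^{-1}$ and the $n$-extension is defined through $\Popt^B(w|_{n-1})^{-1}$. Write $v = \pi_T^D(w)$, $\omega = w|_{n-1}$, $v_B = \pi_T^B(\omega)$, $u = \Popt^B(\omega)$, and set $t = w(n)$, $b = w^{-1}(n)$, so that $\Popt(w)^{-1}(x) = v(w^{-1}(x))$ and $u^{-1}(x) = v_B(\omega^{-1}(x))$. The goal is then to show $v(w^{-1}(x)) = \prescript{n}{}\Popt(\omega)^{-1}(x)$ for every $x \in \pm[n]$.

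Two structural inputs drive the whole argument, and I would establish them first. First, because $\pi_T(w)$ is zeroed, $v$ contains the transposition $(n\ \overline n)$, hence restricts to a permutation of $\pm[n-1]$; moreover, by the defining property $\pi_T^B(\omega) = \pi_T^D(w)$ of the $(n-1)$-projection, the nonzero blocks of $v$ and $v_B$ coincide, while the zero block of $v$ with $\{n,\overline n\}$ removed is exactly the balanced cycle of $v_B$, read in the same clockwise order. This yields the key identity $v|_{\pm[n-1]} = v_B$. Second, I would record how $\omega^{-1}$ compares to $w^{-1}$: since $\omega$ differs from $w$ only in the images of $b$ and $\overline b$, and $w(b) = n$, $w(\overline b) = \overline n$ with $\omega(b),\omega(\overline b) \in \{t,\overline t\}$, one checks that $w^{-1}(y) = \omega^{-1}(y) \in \pm[n-1]$ for all $y \in \pm[n-1]\setminus\{t,\overline t\}$, while $w^{-1}(t) = n$ and $w^{-1}(\overline t) = \overline n$.

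With these in hand, the verification splits into cases. For $x \in \pm[n-1]\setminus\{t,\overline t\}$ we have $w^{-1}(x) = \omega^{-1}(x) \in \pm[n-1]$, so $\Popt(w)^{-1}(x) = v(w^{-1}(x)) = v_B(\omega^{-1}(x)) = u^{-1}(x)$, which is exactly the prescribed value of $\prescript{n}{}\Popt(\omega)^{-1}(x)$. For $x = t$ we get $v(w^{-1}(t)) = v(n) = \overline n$, and for $x = \overline t$ we get $v(\overline n) = n$; both agree with the definition $\prescript{n}{}\Popt(\omega)^{-1}(t) = \overline n$, $\prescript{n}{}\Popt(\omega)^{-1}(\overline t) = n$. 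The remaining and essential case is $x \in \{n,\overline n\}$, where $\Popt(w)^{-1}(n) = v(b)$ must be matched with $u^{-1}(\overline t) = v_B(\omega^{-1}(\overline t))$.

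The hard part will be precisely this last case. Here I would first use Lemma~\ref{cor:DN-refinement} (the rephrasing of Lemma~\ref{lem:DN-6.4}): since $b$ and $n$ lie in the same cycle of $w$ and $n$ lies in the zero block, $b$ lies in the balanced zero cycle of $v$; as $b \in \pm[n-1]$, its $v$-successor equals its $v_B$-successor, so $v(b) = v_B(b)$. It then remains to show $b = \omega^{-1}(\overline t)$, i.e. that the $(n-1)$-projection reconnects $b$ to $\overline{w(n)}$ rather than to $w(n)$; equivalently, that deleting the central vertices $n,\overline n$ splices the two arcs of the zero block \emph{across sign}. This is the geometric content of the projection's defining property, and I expect it to be the main obstacle: it must be extracted from the zeroedness of $\pi_T(w)$ together with Lemma~\ref{lem:DN-zero}, which bounds how far the cycle of $n$ can wrap and thereby pins down how the convex hull of the zero block meets the center. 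Granting $\omega(b) = \overline t$, we conclude $\Popt(w)^{-1}(n) = v_B(b) = u^{-1}(\overline t) = \prescript{n}{}\Popt(\omega)^{-1}(n)$, and the case $x = \overline n$ follows from the symmetry $w(\overline y) = \overline{w(y)}$ of $D_n$. Combining all cases gives $\Popt(w) = \prescript{n}{}\Popt(w|_{n-1})$.
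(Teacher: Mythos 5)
Your overall strategy --- evaluate $\Popt(w)^{-1} = \pi_T(w)\,w^{-1}$ pointwise and compare with the definition of the $n$-extension --- matches the short computational second half of the paper's proof, and your case analysis for $x \in \pm[n-1]$ is fine. But there are two genuine gaps. First, you obtain the key identity $v|_{\pm[n-1]} = v_B$ ``by the defining property $\pi_T^B(\omega)=\pi_T^D(w)$ of the $(n-1)$-projection,'' which begs the question: that equality is a condition \emph{imposed} in the definition of $w|_{n-1}$ (and is only heuristically justified there via convex hulls), and establishing that it actually holds in the precise form needed --- that $\pi_T^B(w|_{n-1})=\mathscr{C}_1\cdots\mathscr{C}_\ell$ when $\pi_T^D(w)=\mathscr{C}_1\cdots\mathscr{C}_\ell(n\ \overline{n})$ --- is the bulk of the paper's proof. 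The paper argues by contradiction: if $\pi_T^B(w|_{n-1})=\mathcal{C}_1\cdots\mathcal{C}_k$ were strictly finer, then by Lemma~\ref{lem:AR-proj} every reflection $(i\ j)(\overline{i}\ \overline{j})\leq_T w$ (including those involving $n,\overline{n}$, which require a separate case analysis on whether $n$ and $\overline{n}$ share a cycle of $w$) would lie below the $D_n$ noncrossing partition $\mathcal{C}_1\cdots\mathcal{C}_k(n\ \overline{n})$, contradicting the minimality of the join $\pi_T^D(w)$ via Lemma~\ref{lem:cover-Dn}. The subtlety is real: the paper follows the lemma with a counterexample ($w=63512\in D_6$) showing that the correspondence between $\pi_T^B(w|_{n-1})$ and $\pi_T^D(w)$ fails in the reverse direction, so this step cannot be discharged by citing the definition.

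Second, you explicitly leave the case $x\in\{n,\overline{n}\}$ unresolved, only ``granting'' $\omega(b)=\overline{t}$ and speculating that it should follow from zeroedness plus Lemma~\ref{lem:DN-zero}. Besides being an admitted gap, the identity you isolate sits uneasily with the paper's conventions: the tie-breaking clause in the definition of $w|_{n-1}$ selects $w_i$ with $w_i^{-1}(w(n))=w^{-1}(n)$, i.e.\ $\omega(b)=t$, and the paper's closing computation correspondingly uses $w|_{n-1}^{-1}(t)=w^{-1}(n)$, so the value of $\Popt(w)^{-1}(n)$ is obtained in one line as $\mathscr{C}_1(w^{-1}(n))$ with no appeal to Lemma~\ref{lem:DN-zero}. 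In short, you take for granted the part where the paper does its real work and locate the ``main obstacle'' in a step that, once the projection identity is established, is pure bookkeeping --- and you do not complete that bookkeeping.
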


\begin{proof}
Suppose $\pi_T(B) = \mathscr{C}_1 \cdots \mathscr{C}_{\ell} (n \enspace \overline{n})$. First, we prove that $\pi_T^B(w|_{n-1}) = \mathscr{C}_1 \cdots \mathscr{C}_{\ell}$. Suppose otherwise for the sake of contradiction so that $\pi_T(w|_{n-1}) = \mathcal{C}_1 \cdots \mathcal{C}_k$ where $k \geq 2$, such that there exists at least an index $r$ with $\mathscr{C}_r \neq \mathcal{C}_r$. By Corollary~\ref{cor:DN-refinement}, we necessarily have that for every $i \in [k]$, there exists a corresponding index $q_i$ such that $\mathcal{C}_i \subset \mathscr{C}_{q_i}$ where the inclusion is as sets. Let \[ w = (c_{11} \enspace c_{12} \cdots c_{1\ell_1})  \cdots (c_{j1} \enspace c_{j2} \cdots c_{jt_1} \enspace n \enspace c_{js_1}  \cdots c_{j\ell_j}) \cdots (c_{k1} \enspace c_{k2}  \cdots c_{jt_2} \enspace \overline{n} \enspace c_{js_2} \cdots  c_{k \ell_k})  \cdots  (c_{m1} \enspace c_{m2}  \cdots  c_{m \ell_m}).\] Because by definition $w|_{n-1}$ contains a balanced cycle \[(c_{j1} \enspace \cdots \enspace c_{jt_1} \enspace c_{js_2} \enspace \cdots \enspace c_{k\ell_k}c_{k1} \enspace \cdots \enspace c_{jt_2}c_{js_1} \enspace \cdots \enspace c_{j\ell_j} ) \enspace \cdots \enspace (c_{k1} \enspace c_{k2} \enspace \cdots \enspace c_{jt_2} \enspace c_{js_2} \enspace c_{k \ell_k}),\] it follows that $\pi_T^B(w|_{n-1})$ contains a balanced cycle as well which corresponds to the part of the partition containing the convex hull of this cycle. without loss of generality this balanced cycle is $\mathcal{C}_1$. Then we have that $\mathcal{C}_2, \ldots, \mathcal{C}_k$ are disjoint from $\mathcal{C}_1$. 

Now, any cycle $\mathcal{C}$ of $w$ not containing $n, \overline{n}$ is also a cycle in $w|_{n-1}$. Next, we observe that if the convex hull of $\mathcal{C}$ in $w$ intersects the convex hull of $(c_{j1} \enspace c_{j2} \enspace \cdots \enspace c_{jt_1} \enspace n \enspace c_{js_1} \enspace \cdots \enspace c_{j\ell_j})(c_{k1} \enspace c_{k2} \enspace \cdots \enspace c_{jt_2} \enspace \overline{n} \enspace c_{js_2} \enspace \cdots \enspace c_{k \ell_k})$, then the convex hull of $\mathcal{C}$ in $w|_{n-1}$ would also intersect the convex hull of \[(c_{j1} \enspace \cdots \enspace c_{jt_1} \enspace c_{js_2} \enspace \cdots \enspace c_{k\ell_k}c_{k1} \enspace \cdots \enspace c_{jt_2}c_{js_1} \enspace \cdots c_{j\ell_j} ) \enspace \cdots \enspace (c_{k1} \enspace c_{k2} \enspace \cdots \enspace c_{jt_2} \enspace c_{js_2} \enspace c_{k \ell_k}).\]

Recall that $D_n$ reflections are given by $(i \enspace j)(\overline{i} \enspace \overline{j})$. In particular, we claim that if $(i \enspace j)(\overline{i} \enspace \overline{j}) \leq_T w $ for some $i,j \in \pm [n-1]$ and $i \not \in \{j, \overline{j} \}$, then $(i \enspace j)(\overline{i} \enspace \overline{j}) \leq_T \mathcal{C}_1 \mathcal{C}_2 \cdots \mathcal{C}_k (n \enspace \overline{n})$. To see this, first make the observations that every cycle in $w|_{n-1}$ is contained (as a set) in a cycle $\mathcal{C}_i$ and also that if $\mathcal{C}$ is a balanced cycle in $w|_{n-1}$ such that $\mathcal{C} \subset \mathcal{C}_j$ (here containment is as sets) then $\mathcal{C}_j$ is a balanced cycle as well, where $\mathcal{C}_j$ is the unique balanced cycle in $\pi_T^B(w|_{n-1})$. For $(i \enspace j)(\overline{i} \enspace \overline{j}) \leq_T w $ by Lemma~\ref{lem:AR-proj}, there are two possibilities: if $i$ and $j$ lie in the same cycle, then they are lie in the same cycle for some $\mathcal{C}_j$ by the earlier observation and so $(i \enspace j)(\overline{i} \enspace \overline{j}) \leq_T \mathcal{C}_1 \cdots \mathcal{C}_k (n \enspace \overline{n})$; if $i$ and $j$ lie in disjoint balanced cycles, then by the earlier observation they lie in the unique balanced cycle $\mathcal{C}_b$ of $\pi_T^B(w \bigr|_{n-1})$ so that $(i \enspace j)(\overline{i} \enspace \overline{j}) \leq_T \mathcal{C}_1 \cdots \mathcal{C}_k (n \enspace \overline{n})$ as well. 

At this stage, the only $D_n$ reflections $(i \enspace j)(\overline{i} \enspace \overline{j}) \leq_T w$ that we have yet to account for are those where $i \in \{n, \overline{n}\}$. There are two cases:

\begin{itemize}
    \item $n$ and $\overline{n}$ lie in the same cycle. Then the possibilities for $\{i, j \}$ are of the form $\{n, t \}$ or $\{ \overline{n}, t \}$ where $t$ is an element from a balanced cycle. By the earlier observation, $t$ continues to lie in a balanced cycle $\mathcal{C}_b$, so that $(n \enspace t)(\overline{n} \enspace \overline{t}) \leq_T \mathcal{C}_1 \mathcal{C}_2 \cdots \mathcal{C}_k (n \enspace \overline{n})$.
    \item $n$ and $\overline{n}$ lie in disjoint cycles. Then the only possibilities for $\{i, j \}$ are of the form $\{n, c_{jx} \}$ and $\{ \overline{n}, c_{ky} \}$ for appropriate indices $x,y$. It is clear that $(n \enspace c_{jx})(\overline{n} \enspace \overline{c_{jx}}) \leq_T \mathcal{C}_1 \cdots \mathcal{C}_k (n \enspace \overline{n})$. 
\end{itemize}

Lastly, $\mathcal{C}_1 \mathcal{C}_2 \cdots \mathcal{C}_k (n \enspace \overline{n})$ is a non-crossing partition of $D_n$, since $\mathcal{C}_1 \mathcal{C}_2 \cdots \mathcal{C}_k$ is a non-crossing partition of $B_n$. This means that $\pi_T(w) \leq_T \mathcal{C}_1 \mathcal{C}_2 \cdots \mathcal{C}_k (n \enspace \overline{n})$, which is a contradiction by Lemma~\ref{lem:cover-Dn}.

What remains is a computation. Let $t = w(n)$. Because $\pi_T^B(w|_{n-1}) = \mathscr{C}_1 \cdots \mathscr{C}_{\ell}$ and $\pi_T(w) = \mathscr{C}_1 \cdots \mathscr{C}_{\ell}(n \enspace \overline{n})$, it follows that $\Popt(w)^{-1}(j) = \prescript{n}{}\Popt(w|_{n-1})^{-1}(j)$ for $j \not \in \{n, \overline{n}, t, \overline{t} \}$. Now, $\Popt(w)^{-1}(t) = (\pi_T(w))^{-1}(n) = \overline{n} = \prescript{n}{}\Popt(w|_{n-1})^{-1}(n)$ and similarly $\Popt(w)^{-1}(\overline{t}) = n = \prescript{n}{}\Popt(w|_{n-1})^{-1}(n)$. Furthermore, without loss of generality $w^{-1}(n) \in \mathscr{C}_1$. Then $\Popt(w)^{-1}(n) = \mathscr{C}_1(w^{-1}(n)) = \mathscr{C}_1(w|_{n-1}^{-1}(t)) = \prescript{n}{}\Popt(w|_{n-1})^{-1}(t)$ and analogously $\Popt(w)^{-1}(\overline{n}) = \prescript{n}{}\Popt(w|_{n-1})^{-1}(\overline{t})$.
\end{proof}

The first step of the proof above shows that if $\pi_T(B) = \mathscr{C}_1 \cdots \mathscr{C}_{\ell} (n \enspace \overline{n})$, then $\pi_T^{B}(w|_{n-1}) = \mathscr{C}_1 \cdots \mathscr{C}_{\ell}$. It may be tempting to claim that the converse is also true. In this dream scenario, analyzing the behavior of $w$ under the operation of $\Popt$ would very neatly reduce to that of understanding the operation of $w|_{n-1}$ under $\Popt^B$, and we already know how to do the latter because of Section~\ref{sec:B}. However, such a claim is unfortunately false and even $\pi_T^B(w|_{n-1}) = \widetilde{c}$ is insufficient to guarantee that $\pi_T(B)$ is zeroed. For instance, consider the permutation $w = 63512 \in D_6$ (where we are writing the permutation on $\pm [n]$ in one-line notation). Then $w|_{5} = \overline{1}35\overline{4}2\in B_5 $ is such that $\pi_T^B(w|_{5}) = 2345\overline{1} = \widetilde{c}_5$ while $\pi_T(w) = 234561 \neq c_6$. This shows that the behavior of $\Popt^D$ is more subtle that we might na\"ively expect. 

\begin{figure}[!htp]
    \centering
    \begin{tikzpicture}

\begin{scope}
\equid[1.7]{8}

\draw[-{Stealth[length=2mm, width=1mm]}, line width = 0.7pt] (N4) -- (N5);
\draw[-{Stealth[length=2mm, width=1mm]}, line width = 0.7pt] (N5) -- (ori);
\draw[-{Stealth[length=2mm, width=1mm]}, line width = 0.7pt] (P5) -- (ori);
\draw[-{Stealth[length=2mm, width=1mm]}, line width = 0.7pt] (ori) -- (P4);
\draw[-{Stealth[length=2mm, width=1mm]}, line width = 0.7pt] (ori) -- (N4);
\draw[-{Stealth[length=2mm, width=1mm]}, line width = 0.7pt] (P4) -- (P5);

\draw[-{Stealth[length=2mm, width=1mm]}, line width = 0.7pt] (N1) -- (N2);
\draw[-{Stealth[length=2mm, width=1mm]}, line width = 0.7pt] (N2) -- (N8);
\draw[-{Stealth[length=2mm, width=1mm]}, line width = 0.7pt] (N8) -- (N1);

\draw[-{Stealth[length=2mm, width=1mm]}, line width = 0.7pt] (P1) -- (P2);
\draw[-{Stealth[length=2mm, width=1mm]}, line width = 0.7pt] (P2) -- (P8);
\draw[-{Stealth[length=2mm, width=1mm]}, line width = 0.7pt] (P8) -- (P1);

\draw[-{Stealth[length=2mm, width=1mm]}, line width = 0.7pt] (N3) -- (N6);
\draw[-{Stealth[length=2mm, width=1mm]}, line width = 0.7pt] (N6) -- (N7);
\draw[-{Stealth[length=2mm, width=1mm]}, line width = 0.7pt] (N7) -- (N3);

\draw[-{Stealth[length=2mm, width=1mm]}, line width = 0.7pt] (P3) -- (P6);
\draw[-{Stealth[length=2mm, width=1mm]}, line width = 0.7pt] (P6) -- (P7);
\draw[-{Stealth[length=2mm, width=1mm]}, line width = 0.7pt] (P7) -- (P3);
\end{scope}

\begin{scope}[shift={(5,0)}]
\equib[1.7]{8}
\draw[-{Stealth[length=2mm, width=1mm]}, line width = 0.7pt] (N4) -- (N5);
\draw[-{Stealth[length=2mm, width=1mm]}, line width = 0.7pt] (N5) -- (P4);
\draw[-{Stealth[length=2mm, width=1mm]}, line width = 0.7pt] (P4) -- (P5);
\draw[-{Stealth[length=2mm, width=1mm]}, line width = 0.7pt] (P5) -- (N4);

\draw[-{Stealth[length=2mm, width=1mm]}, line width = 0.7pt] (N1) -- (N2);
\draw[-{Stealth[length=2mm, width=1mm]}, line width = 0.7pt] (N2) -- (N8);
\draw[-{Stealth[length=2mm, width=1mm]}, line width = 0.7pt] (N8) -- (N1);

\draw[-{Stealth[length=2mm, width=1mm]}, line width = 0.7pt] (P1) -- (P2);
\draw[-{Stealth[length=2mm, width=1mm]}, line width = 0.7pt] (P2) -- (P8);
\draw[-{Stealth[length=2mm, width=1mm]}, line width = 0.7pt] (P8) -- (P1);

\draw[-{Stealth[length=2mm, width=1mm]}, line width = 0.7pt] (N3) -- (N6);
\draw[-{Stealth[length=2mm, width=1mm]}, line width = 0.7pt] (N6) -- (N7);
\draw[-{Stealth[length=2mm, width=1mm]}, line width = 0.7pt] (N7) -- (N3);

\draw[-{Stealth[length=2mm, width=1mm]}, line width = 0.7pt] (P3) -- (P6);
\draw[-{Stealth[length=2mm, width=1mm]}, line width = 0.7pt] (P6) -- (P7);
\draw[-{Stealth[length=2mm, width=1mm]}, line width = 0.7pt] (P7) -- (P3);
\end{scope}

\begin{scope}[shift={(10,0)}]
\equid[1.7]{8}
\draw[-{Stealth[length=2mm, width=1mm]}, line width = 0.7pt] (ori) -- (P1);
\draw[-{Stealth[length=2mm, width=1mm]}, line width = 0.7pt] (P1) -- (P2);
\draw[-{Stealth[length=2mm, width=1mm]}, line width = 0.7pt] (P2) -- (P3);
\draw[-{Stealth[length=2mm, width=1mm]}, line width = 0.7pt] (P3) -- (P4);
\draw[-{Stealth[length=2mm, width=1mm]}, line width = 0.7pt] (P4) -- (P5);
\draw[-{Stealth[length=2mm, width=1mm]}, line width = 0.7pt] (P5) -- (P6);
\draw[-{Stealth[length=2mm, width=1mm]}, line width = 0.7pt] (P6) -- (P7);
\draw[-{Stealth[length=2mm, width=1mm]}, line width = 0.7pt] (P7) -- (P8);
\draw[-{Stealth[length=2mm, width=1mm]}, line width = 0.7pt] (P8) -- (ori);

\draw[-{Stealth[length=2mm, width=1mm]}, line width = 0.7pt] (ori) -- (N1);
\draw[-{Stealth[length=2mm, width=1mm]}, line width = 0.7pt] (N1) -- (N2);
\draw[-{Stealth[length=2mm, width=1mm]}, line width = 0.7pt] (N2) -- (N3);
\draw[-{Stealth[length=2mm, width=1mm]}, line width = 0.7pt] (N3) -- (N4);
\draw[-{Stealth[length=2mm, width=1mm]}, line width = 0.7pt] (N4) -- (N5);
\draw[-{Stealth[length=2mm, width=1mm]}, line width = 0.7pt] (N5) -- (N6);
\draw[-{Stealth[length=2mm, width=1mm]}, line width = 0.7pt] (N6) -- (N7);
\draw[-{Stealth[length=2mm, width=1mm]}, line width = 0.7pt] (N7) -- (N8);
\draw[-{Stealth[length=2mm, width=1mm]}, line width = 0.7pt] (N8) -- (ori);

\end{scope}
\end{tikzpicture}
    \caption{An illustration of a \emph{double-cysted} permutation. Here, on the left we have \[w = (1 \enspace 2 \enspace 8)(3\enspace6\enspace7)(4\enspace 9 \enspace 5) (\overline{4} \enspace \overline{5} \enspace \overline{9})(\overline{1} \enspace \overline{2} \enspace \overline{8})(\overline{3} \enspace \overline{6} \enspace \overline{7}) \]  and in the middle we have \[w|_{8} = (1 \enspace 2 \enspace 8)(3\enspace6\enspace7)(4\enspace 5\enspace \overline{4} \enspace \overline{5})(\overline{1} \enspace \overline{2} \enspace \overline{8})(\overline{3} \enspace \overline{6} \enspace \overline{7}).\]  On the right is $\pi_T(w) = (1 \enspace 2 \enspace 3 \enspace 4 \enspace 5 \enspace 6 \enspace 7 \enspace 8 \enspace 9) (\overline{1} \enspace \overline{2} \enspace \overline{3} \enspace \overline{4} \enspace \overline{5} \enspace \overline{6} \enspace \overline{7} \enspace \overline{8} \enspace \overline{9})$ which consists of two cycles.}
    \label{fig:double-cyst}
\end{figure}

\begin{definition} 
Suppose $w |_{n-1}$ is obtained from $w \in D_n$ and let $t = w(n)$. The \emph{modified $n$-extension} of $\Popt^B(w |_{n-1})$ denoted $\prescript{n}{m}\Popt(w |_{n-1})$ is defined to be the permutation satisfying
\begin{itemize}
    \item $\prescript{n}{m}\Popt(w |_{n-1})^{-1}(j) = \Popt^B(w |_{n-1})^{-1}(j)$ for all $j \in \pm [n]$ such that $j \not \in \{ t, \overline{t}, n, \overline{n}\}$,
    \item $\prescript{n}{m}\Popt(w |_{n-1})^{-1}(t) = n$,
    \item $\prescript{n}{m}\Popt(w |_{n-1})^{-1}(\overline{t}) = \overline{n}$,
    \item $\prescript{n}{m}\Popt(w |_{n-1})^{-1}(n) = \Popt^B(w |_{n-1})^{-1}(\overline{t})$, and
    \item $\prescript{n}{m}\Popt(w |_{n-1})^{-1}(\overline{n}) = \Popt^B(w |_{n-1})^{-1}(t)$.
\end{itemize}
\end{definition}

Recall the definition of $w_1$ and $w_2$ from before. Observe that if $\prescript{n}{}\Popt(w |_{n-1}) = w_i$, then $ \prescript{n}{m}\Popt(w |_{n-1}) = w_{3-i}$. In particular, if $\pi_T(w)$ is zeroed and $\pi_T(\prescript{n}{}\Popt(w |_{n-1})) \neq \pi_T(w)$, then $\pi_T(\prescript{n}{m}\Popt(w |_{n-1})) = \pi_T(w)$. 

This shows that there are only the following two possibilities if $w \in D_n$ and $\pi_T(w)$ is zeroed is such that $\Popt^B(w|_{n-1}) \neq \Popt(w)|_{n-1}$. 

\begin{enumerate}
    \item $\pi_T(\Popt(w))$ is not zeroed. Lemma~\ref{lem:cover-Dn} then implies that the zero block of $\pi_T(w)$ is split into two cycles in $\pi_T(\Popt(w))$. Call the corresponding $w$ \emph{double-cysted}.
    \item $\pi_T(\Popt(w))$ is zeroed and $\Popt^2(B) = \prescript{n}{m}(\Popt^B(w|_{n-1}))$. Call such an element $w$ \emph{pierced}.
\end{enumerate}

\begin{figure}[!htp]
    \centering
    \begin{tikzpicture}
\begin{scope}
\equib[1.8]{4}
\draw[-{Stealth[length=2mm, width=1mm]}, line width = 0.7pt] (N1) -- (N4);
\draw[-{Stealth[length=2mm, width=1mm]}, line width = 0.7pt] (N4) -- (P1);
\draw[-{Stealth[length=2mm, width=1mm]}, line width = 0.7pt] (P1) -- (P4);
\draw[-{Stealth[length=2mm, width=1mm]}, line width = 0.7pt] (P4) -- (N1);

\draw[-{Stealth[length=2mm, width=1mm]}, line width = 0.7pt] (N2) -- (N3);
\draw[-{Stealth[length=2mm, width=1mm]}, line width = 0.7pt] (N3) -- (P2);
\draw[-{Stealth[length=2mm, width=1mm]}, line width = 0.7pt] (P2) -- (P3);
\draw[-{Stealth[length=2mm, width=1mm]}, line width = 0.7pt] (P3) -- (N2);

\end{scope}
\begin{scope}[shift={(6,0)}]
\equid[1.8]{4}
\draw[-{Stealth[length=2mm, width=1mm]}, line width = 0.7pt] (N1) -- (N4);
\draw[-{Stealth[length=2mm, width=1mm]}, line width = 0.7pt] (N4) -- (P1);
\draw[-{Stealth[length=2mm, width=1mm]}, line width = 0.7pt] (P1) -- (P4);
\draw[-{Stealth[length=2mm, width=1mm]}, line width = 0.7pt] (P4) -- (N1);

\draw[-{Stealth[length=2mm, width=1mm]}, line width = 0.7pt] (N2) -- (N3);
\draw[-{Stealth[length=2mm, width=1mm]}, line width = 0.7pt] (N3) -- (ori);
\draw[-{Stealth[length=2mm, width=1mm]}, line width = 0.7pt] (ori) -- (N2);
\draw[-{Stealth[length=2mm, width=1mm]}, line width = 0.7pt] (P2) -- (P3);
\draw[-{Stealth[length=2mm, width=1mm]}, line width = 0.7pt] (P3) -- (ori);
\draw[-{Stealth[length=2mm, width=1mm]}, line width = 0.7pt] (ori) -- (P2);

\end{scope}
\end{tikzpicture}
    \caption{On the left, we have $w|_{4} =(1 \enspace 4 \enspace \overline{1} \enspace \overline{4}) (2 \enspace 3 \enspace \overline{2} \enspace \overline{3})$. The $5$-extension of $w|_{4}$ is given by $(1 \enspace 4 \enspace \overline{1} \enspace \overline{4}) (2 \enspace 3 \enspace 5)(\overline{2} \enspace \overline{3} \enspace \overline{5})$.}
    \label{fig:extensions}
\end{figure}

\begin{definition}
$w \in D_n$ is \emph{bad} if either $w$ is double-cysted or pierced.
\end{definition}

\begin{corollary}\label{cor:red-to-B}
If $w \in D_n$ is such that $\pi_T(w)$ is zeroed and such that $\Popt^i(w)$ is not bad for $1 \leq i \leq t$. We have the following alternative characterization of $\Popt^i(w)$ for $1 \leq i \leq t$.
\begin{itemize}
    \item If $i$ is odd, then $\Popt^i(w) = \prescript{n}{}\Popt( (\Popt^B)^{i-1}(w|_{n-1}))$. 
    \item If $i$ is even, then $\Popt^i(w) = \prescript{n}{m}\Popt( (\Popt^B)^{i-1}(w|_{n-1}))$. 
\end{itemize}
\end{corollary}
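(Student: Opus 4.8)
The plan is to induct on $i$, carrying along the auxiliary invariant that the $B_{n-1}$-projections telescope. Write $u_i = \Popt^i(w)$, $b_i = (\Popt^B)^i(w|_{n-1})$, and $a = w(n)$. The two statements I would prove simultaneously by induction are: (P1) the stated formula for $u_i$, and (P2) the projection identity $u_i|_{n-1} = b_i$. For the base case $i=1$, I would first observe that since $\pi_T(w)$ is zeroed we must have $w(n)\in\pm[n-1]$: a fixed point $w(n)=n$ would force $\{n\},\{\overline n\}$ to be singleton blocks and a cycle $w(n)=\overline n$ would, via Lemma~\ref{lem:AR-proj} and Lemma~\ref{lem:DN-zero}, contradict zeroedness (this is exactly the dichotomy opening the proof of Lemma~\ref{lem:not-zeroed}). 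Hence Lemma~\ref{lem:DtoB} applies and gives $u_1 = \prescript{n}{}\Popt(w|_{n-1})$, which is (P1) for $i=1$; and since $w$ is not bad, the dichotomy preceding the definition of \emph{bad} gives $u_1|_{n-1} = \Popt(w)|_{n-1} = \Popt^B(w|_{n-1}) = b_1$, which is (P2).

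For the inductive step I would assume (P2) at stage $i-1$, i.e.\ $u_{i-1}|_{n-1} = b_{i-1}$. Because $u_{i-1}$ is not bad it is in particular not double-cysted, so by Lemma~\ref{lem:cover-Dn} the projection $\pi_T(u_{i-1})$ is again zeroed, and the same argument as in the base case shows $u_{i-1}(n)\in\pm[n-1]$. Thus Lemma~\ref{lem:DtoB} applies to $u_{i-1}$ and yields $u_i = \prescript{n}{}\Popt(u_{i-1}|_{n-1})$, namely the plain $n$-extension of $\Popt^B(u_{i-1}|_{n-1}) = \Popt^B(b_{i-1}) = b_i$, taken with parameter $u_{i-1}(n)$. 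Applying the not-bad dichotomy once more to $u_{i-1}$ closes (P2): $u_i|_{n-1} = \Popt^B(u_{i-1}|_{n-1}) = b_i$. So far everything is an immediate unwinding of Lemma~\ref{lem:DtoB} and the not-bad hypothesis.

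What remains is to identify \emph{which} named extension $u_i$ is, and this is where the parity enters. The key bookkeeping fact, read directly off the $n$-extension formula, is that one step of $\Popt^D$ sends the central value $w(n)\mapsto\overline{w(n)}$; hence $u_{i-1}(n) = a$ when $i$ is odd and $u_{i-1}(n) = \overline a$ when $i$ is even. When $i$ is odd, feeding the parameter $a$ into the plain extension of $b_i$ reproduces $\prescript{n}{}\Popt(b_{i-1})$ verbatim, giving the odd case of the corollary. When $i$ is even, I must reconcile the plain extension of $b_i$ carrying the flipped parameter $\overline a$ with the \emph{modified} extension $\prescript{n}{m}\Popt(b_{i-1})$ named in the statement; here I would invoke the observation (recorded just after the definition of the modified $n$-extension) that $\prescript{n}{}\Popt(\cdot)$ and $\prescript{n}{m}\Popt(\cdot)$ are precisely the two lifts $w_1,w_2$ of a given $B_{n-1}$-element, together with the fact that flipping the parameter from $a$ to $\overline a$ interchanges the roles of the central points $n$ and $\overline n$. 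The verification is a finite comparison of the inverse-image formulas, using the bar-symmetry $b_i^{-1}(\overline a)=\overline{b_i^{-1}(a)}$.

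I expect this last paragraph to be the main obstacle. The two inductions (P1) and (P2) are routine consequences of Lemma~\ref{lem:DtoB} and the not-bad dichotomy, but the even-parity matching is delicate: Lemma~\ref{lem:DtoB} always produces a \emph{plain} extension, so the appearance of $\prescript{n}{m}\Popt$ in the even case is entirely an artifact of the orientation at the central index having been reversed by the previous application of $\Popt^D$. Making this precise requires tracking the parameter and the $n,\overline n$ preimages through the explicit definitions and confirming, via the $w_1/w_2$ identification, that the not-bad hypothesis is exactly what keeps each iterate on the admissible lift; a pierced or double-cysted element is precisely one that would instead jump to the other branch (the modified extension at the wrong stage) or leave the zeroed locus, which is why excluding \emph{bad} elements is what makes the clean alternating description hold.
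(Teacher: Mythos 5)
Your proposal is correct and follows essentially the same route as the paper, which in fact offers no explicit proof of this corollary: it is asserted as an immediate consequence of Lemma~\ref{lem:DtoB} together with the dichotomy defining \emph{bad} elements, and your two-part induction (the formula plus the telescoping identity $\Popt^i(w)|_{n-1} = (\Popt^B)^i(w|_{n-1})$), with the parity alternation explained by the sign flip of the central value $\Popt(w)(n) = \overline{w(n)}$, is exactly the intended argument. Two small remarks: the induction does need $w$ itself (the $i=0$ iterate) to be non-bad for the telescoping to start, a hypothesis the corollary omits but which holds in every application the paper makes of it, so your implicit strengthening is appropriate; and the "finite comparison" in your even case will find that the plain extension with parameter $\overline{a}$ and the modified extension with parameter $a$ agree only up to interchanging the labels $n$ and $\overline{n}$ at the central point --- an imprecision inherited from the paper's parameter convention for the extensions rather than a defect of your argument, and immaterial for how the corollary is used.
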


Next, we examine bad elements in greater detail. First, we will study double-cysted elements. 
    
\begin{claim}\label{claim:double-cyst}
Suppose $w$ is double-cysted, so $ \mathcal{C}_1 \mathcal{C}_2 \subset \pi_T(\Popt(w))$ for some (non-balanced) cycles $\mathcal{C}_1 \ni n$ and $\mathcal{C}_2 = \overline{\mathcal{C}_1} \ni \bar{n}$. By re-indexing $\bar{i}$ as $i$ in each of $\mathcal{C}_1$ and $\mathcal{C}_2$ as necessary, we can consider $\mathcal{C}_1$ and $\mathcal{C}_2$ as elements of $A_n$. Then $\min\{ \aexc^A(\mathcal{C}_1), \aexc^A(\mathcal{C}_2)\} \leq \aexc^B(\Popt(w|_{n-1}))$. 
\end{claim}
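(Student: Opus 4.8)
The plan is to transport the whole computation to type $B_{n-1}$ and then read off antiexceedances from the clockwise structure of noncrossing blocks. Write $t = w(n)$, $u = \Popt^B(w|_{n-1})$ and $U = \Popt(w)$. Since $\pi_T(w)$ is zeroed and $w(n)=t\in\pm[n-1]$, Lemma~\ref{lem:DtoB} gives $U = \prescript{n}{}\Popt(w|_{n-1})$, the $n$-extension of $u$. Unwinding the definition of the $n$-extension, $U$ is obtained from $u$ by splicing $\overline n$ into the $u$-cycle through $t$ immediately before $t$ and splicing $n$ into the $u$-cycle through $\overline t$ immediately before $\overline t$; in particular the cycle of $U$ through $n$ is the $u$-cycle through $\overline t$ with $n$ inserted, and by Lemma~\ref{cor:DN-refinement} its restriction to $\pm[n-1]$ lies inside the block $\mathcal{C}_1$ of $\pi_T(U)$. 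I first dispose of the $\min$: because $\mathcal{C}_2=\overline{\mathcal{C}_1}$, the $180^\circ$ rotation of the $D$-circle carries the clockwise reading of $\mathcal{C}_1$ to that of $\mathcal{C}_2$ with all entries negated, so after replacing each $\overline i$ by $i$ the two cycles become the \emph{same} cyclic word on the common support $S=\{\,|x|:x\in\mathcal{C}_1\,\}\subseteq[n]$. Hence $\aexc^A(\mathcal{C}_1)=\aexc^A(\mathcal{C}_2)$ and it suffices to bound $\aexc^A(\mathcal{C}_1)$.

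Next I compute $\aexc^A(\mathcal{C}_1)$ purely from the geometry. Since $\mathcal{C}_1$ is a block of the type-$D_n$ noncrossing partition $\pi_T(U)$ it is read clockwise around the $D$-circle, and for any clockwise cycle $(s_1\,s_2\cdots s_m)$ the antiexceedance count equals the number of cyclic descents $\#\{j: s_{j-1}>s_j\}$ of the reading sequence. Reindexing $\overline i\mapsto i$ turns the clockwise label order on the circle into the barred arc $1,2,\dots,n-1$ followed by the unbarred arc $1,2,\dots,n-1$, together with the central label $n$. Thus the reindexed reading of $\mathcal{C}_1$ is the increasing run of its barred entries, then the increasing run of its unbarred entries, plus the maximal central entry $n$, and so it has very few cyclic descents: with $N=\mathcal{C}_1\cap(-[n-1])$ and $P=\mathcal{C}_1\cap[n-1]$, the only candidates are the barred-to-unbarred junction (present iff $\max|N|>\min P$), the single descent immediately after the central entry $n$, and the wrap-around. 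In particular $\aexc^A(\mathcal{C}_1)$ is a small explicit quantity governed by the sign pattern of $\mathcal{C}_1$ along the circle.

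The crux is then to manufacture at least that many type-$B$ antiexceedances of $u$. The correspondence I would set up charges each cyclic descent of $\mathcal{C}_1$ to a clockwise \emph{sign boundary} of the block (a place where the reading passes from a negative to a positive position, or crosses the center), and attaches to each such boundary a positive index $i\in[n-1]$ with $i\prec u^{-1}(i)$, i.e.\ an element of the type-$B$ antiexceedance set of $u$. This uses that $\mathcal{C}_1\cap\pm[n-1]$ contains the $u$-cycle through $\overline t$ and that, along a clockwise-read noncrossing cycle, a sign change forces the associated predecessor relation in $u$ to be $\prec$-increasing. Verifying that this assignment is injective, combined with the reduction to a single cycle above, yields $\aexc^A(\mathcal{C}_1)\le\aexc^B(u)=\aexc^B(\Popt(w|_{n-1}))$, which is the claim.

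\textbf{Main obstacle.} The delicate point is exactly this last matching, and two issues must be handled with care. First, the Athanasiadis--Reiner reading convention for the \emph{central} point $n/\overline n$ has to be pinned down, since the single descent it contributes is precisely what distinguishes $\aexc^A(\mathcal{C}_1)=1$ from $\aexc^A(\mathcal{C}_1)=2$. Second, because $w$ is double-cysted the clean zeroed correspondence of Lemma~\ref{lem:DtoB} fails, so $\mathcal{C}_1$ may be a proper coarsening under $\pi_T$ of the $U$-cycle through $n$; I would control which cycles of $U$ can be absorbed into $\mathcal{C}_1$ using Lemma~\ref{lem:AR-proj} together with the covering description in Lemma~\ref{lem:cover-Dn}, and then check that every additional sign boundary created by such a merge is again charged to a \emph{distinct} antiexceedance of $u$. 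Establishing injectivity of this charge in the presence of the projection merges is where the real work lies.
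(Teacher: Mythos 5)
Your reductions at the start are sound, and one of them is actually sharper than what the paper records: since $\mathcal{C}_2=\overline{\mathcal{C}_1}$ and neither cycle is balanced, the re-indexing $\bar i\mapsto i$ sends both cycles to the \emph{same} cyclic word, so $\aexc^A(\mathcal{C}_1)=\aexc^A(\mathcal{C}_2)$ and the $\min$ is redundant. Likewise, the identification of $\Popt(w)$ with the $n$-extension of $u=\Popt^B(w|_{n-1})$ via Lemma~\ref{lem:DtoB}, and the observation that $\aexc^A$ of a clockwise-read cycle is its number of cyclic descents (hence at most the barred-to-unbarred junction, the position after the central entry, and the wrap-around), are all correct. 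This cyclic-descent route is genuinely different from the paper's argument, which instead compares $\aexc^B(\mathcal{C}_j)$ with $\aexc^A(\mathcal{D}_j)$ by pairing each antiexceedance ``created'' by re-indexing in one cycle with one ``destroyed'' in its mirror, and then sums the two contributions.

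The problem is that you stop exactly where the claim begins. The entire content of the inequality is the passage from ``$\aexc^A(\mathcal{C}_1)$ equals the number of sign boundaries of the block'' to ``$u$ has at least that many type-$B$ antiexceedances,'' and this step is only announced, not carried out: the charging map is described in the conditional (``the correspondence I would set up,'' ``I would control''), no rule is given for which index $i$ with $i\prec u^{-1}(i)$ a given sign boundary is charged to, and you yourself defer both the well-definedness and the injectivity to ``where the real work lies.'' This is not a minor verification: $\mathcal{C}_1$ is a block of $\pi_T(\Popt(w))$ and may properly coarsen several cycles of $\Popt(w)$, so consecutive elements in the clockwise reading of $\mathcal{C}_1$ at a sign boundary need not be related by $u$ at all, and it is not clear that a sign boundary produces any antiexceedance of $u$, let alone distinct ones for distinct boundaries. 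In the worst case your descent count is $3$, so you would need to exhibit three distinct antiexceedances of $u$; nothing in the proposal does this. As written, the proposal is a plan with the decisive step missing rather than a proof.
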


\begin{proof}
Let $\mathcal{C}_1$ and $\mathcal{C}_2$ after re-indexing be $\mathcal{D}_1$ and $\mathcal{D}_2$, respectively. Note that by re-indexing it is possible that we create ``new'' antiexceedances of the form $\mathcal{D}_j^{-1}(u) = v$ where $v > u$ but  $\mathcal{C}_j^{-1}(u) = \bar{v}$. without loss of generality $j = 1$. Then note that $\mathcal{D}_2^{-1}(v) = u$ while $\mathcal{C}_2^{-1}(\bar{v}) = u$ so an antiexceedance in $\mathcal{C}_2$ is ``destroyed'' when we pass to $\mathcal{D}_2$ as a result of re-indexing. Consequently, we have that $\aexc^B(\Popt(\Popt^B(w|_{n-1}))) \geq \max \{ (\aexc(\mathcal{C}_1) - 1) + (\aexc(\mathcal{C}_2 - 1), 1 \}$. Here, the $-1$ in $\aexc(\mathcal{C}_1) - 1$ accounts for the additional antiexceedance from $\mathcal{C}_1^{-1}(n)$ that is not present in $w|_{n-1}$. In particular, if $\max\{\aexc(\mathcal{C}_1), \aexc(\mathcal{C}_2) \} -1 \geq 1$ then $\aexc(\Popt(\Popt^B(w|_{n-1}))) \geq \min\{ \aexc(\mathcal{C}_1), \aexc(\mathcal{C}_2)\}$. Otherwise, we have that \[\aexc(\mathcal{C}_1) = \aexc(\mathcal{C}_2) = \aexc(\Popt(\Popt^B(w|_{n-1}))) = 1.\] 
\end{proof}

As a consequence of the above claim and Lemma~\ref{lem:DN-A}, we have the following useful consequence. 

\begin{corollary}\label{cor:dc-aexc}
Suppose $w$ is double-cysted, then $\left|O_{\Popt(w)} \right| \leq \aexc(\Popt^B(w|_{n-1}))$.
\end{corollary}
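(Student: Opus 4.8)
The plan is to read $\left|O_{\Popt(w)}\right|$ as the length of the forward orbit of $v := \Popt(w)$, and to prove directly that $\left|O_{\Popt}(v)\right| \le \min\{\aexc^A(\mathcal{C}_1),\aexc^A(\mathcal{C}_2)\}$; combined with Claim~\ref{claim:double-cyst} this is exactly the asserted bound $\left|O_{\Popt}(v)\right| \le \aexc^B(\Popt^B(w|_{n-1}))$. First I would record the structural consequence of $w$ being double-cysted: $\pi_T(v)=\pi_T(\Popt(w))$ is not zeroed, and by Lemma~\ref{lem:cover-Dn} a non-zeroed partition is only covered by non-zeroed ones, so $\pi_T(\Popt^i(w))$ is not zeroed for every $i \ge 1$. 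Since a balanced cycle would force a zero block in the projection, no iterate $\Popt^i(v)$ with $i \ge 0$ has a balanced cycle.

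In this balanced-cycle-free regime Lemma~\ref{lem:AR-proj} degenerates: $(i \enspace j)(\overline{i} \enspace \overline{j}) \le_T u$ holds precisely when $i$ and $j$ share a cycle of $u$, so $\pi_T^D$ merges indices exactly as the type-$A$ non-crossing projection does, and it respects the central symmetry $\mathcal{C} \mapsto \overline{\mathcal{C}}$. Hence the whole forward orbit of $v$ is invariant under $x \mapsto \overline{x}$ and decomposes into independent mirror-pairs of cycles, the central pair being $\{\mathcal{C}_1,\mathcal{C}_2\}$ with $n \in \mathcal{C}_1$ and $\overline{n}\in\mathcal{C}_2=\overline{\mathcal{C}_1}$. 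I would then re-index the half carrying $n$ (replacing each $\overline{\imath}$ by $\imath$) to view it as a permutation of a subset of $[n]$; on this half $\Popt^D$ acts as the ordinary $\Popt^A$, and because the two halves are exact mirror images they reach the identity in the same number of steps. Thus $\left|O_{\Popt}(v)\right|$ equals the $\Popt^A$-orbit length of the re-indexed half, which Lemma~\ref{lem:DN-A} controls: every application of $\Popt^A$ lowers the antiexceedance count by $\cyc(\pi_T)\ge 1$, and the count vanishes only at $e$.

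The crucial bookkeeping is that the re-indexed half projects under $\pi_T^A$ to the single cycle $\mathcal{C}_1$, and that the centre element $n$ contributes exactly one antiexceedance of $\mathcal{C}_1$, namely the one at $\mathcal{C}_1^{-1}(n)$ — the same ``extra'' antiexceedance isolated in the proof of Claim~\ref{claim:double-cyst} (the $-1$ in $\aexc(\mathcal{C}_1)-1$). This antiexceedance is discharged by the special handling of the centre rather than by a genuine $\Popt$ step, and accounting for this one-step economy removes the usual $+1$, yielding the sharp inequality $\left|O_{\Popt}(v)\right| \le \aexc^A(\mathcal{C}_1)$; the mirror argument gives $\left|O_{\Popt}(v)\right| \le \aexc^A(\mathcal{C}_2)$, whence $\left|O_{\Popt}(v)\right| \le \min\{\aexc^A(\mathcal{C}_1),\aexc^A(\mathcal{C}_2)\}$. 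Feeding this into Claim~\ref{claim:double-cyst} closes the argument, and via $\left|O_{\Popt}(w)\right| = \left|O_{\Popt}(v)\right|+1 \le \aexc^B(\Popt^B(w|_{n-1}))+1 \le (2n-4)+1 = 2n-3$ it will ultimately exclude double-cysted $w$ from having orbit length $2n-2$.

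I expect the main obstacle to be precisely this constant-sharp centre analysis: one must verify that the type-$A$ reduction is faithful at the centre and that the antiexceedance attached to $n$ genuinely corresponds to a one-step saving, so that no spurious $+1$ survives — a stray $+1$ here would raise the final bound to $2n-2$ and destroy the exclusion. A secondary point requiring care is that $v$ may possess mirror-pair components other than $\{\mathcal{C}_1,\mathcal{C}_2\}$ (descended from the non-zero blocks of $\pi_T(w)$); I would either show that the pair containing $n$ dies last, or argue that every component's orbit length is already bounded by $\aexc^B(\Popt^B(w|_{n-1}))$, so that restricting attention to the central pair legitimately bounds the entire orbit.
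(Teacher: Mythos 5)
Your overall architecture is the paper's: after the double-cyst the dynamics decompose into independent components (the mirror-paired cycles descending from the non-zero blocks of $\pi_T(w)$, plus the central pair $\mathcal{D}_1,\mathcal{D}_2$ obtained by splitting the zero block), the central pair is re-indexed and treated via type-$A$ dynamics, and the bound comes from combining Claim~\ref{claim:double-cyst} with the antiexceedance-decreasing property in Lemma~\ref{lem:DN-A}/Lemma~\ref{lem:DN-B}. Your observation that the absence of balanced cycles after the split makes Lemma~\ref{lem:AR-proj} degenerate to the type-$A$ merging rule is in fact a cleaner justification of component independence than the paper's appeal to Lemma~\ref{lem:cover-Dn}, and the second of your two fallbacks for the non-central components --- bounding each component's orbit length by $\aexc(\Popt^B(w|_{n-1}))$ --- is exactly what the paper does, via $\sum_i \aexc(\mathcal{C}_i) \le \aexc(\Popt^B(w|_{n-1}))$ and Lemma~\ref{lem:DN-B}, before taking the maximum over all components.

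The step that does not go through as written is the ``one-step economy'' by which you propose the sharpened central bound $\left|O_{\Popt}(v)\right| \le \min\{\aexc^A(\mathcal{C}_1),\aexc^A(\mathcal{C}_2)\}$ with no additive constant. Lemma~\ref{lem:DN-A} only yields $\left|O_{\Popt^A}(u)\right| \le \aexc(u)+1$, and equality holds whenever every iterate has a single-cycle projection; nothing about the re-indexed central component rules that out, so the antiexceedance sitting at $\mathcal{C}_1^{-1}(n)$ is not automatically ``discharged for free'' --- it can be consumed by an ordinary $\Popt^A$ step like any other, in which case your sharp inequality fails and the stray $+1$ you worry about reappears. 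You correctly identify this as the main obstacle but do not resolve it, and your first fallback for the other components (``the pair containing $n$ dies last'') is likewise unsubstantiated. The paper never attempts the sharp centre analysis: it bounds the maximum of \emph{all} component orbit lengths by $\aexc(\Popt^B(w|_{n-1}))$ directly, letting the slack already present in Claim~\ref{claim:double-cyst} absorb the additive constant. Routing the argument that way makes the delicate centre bookkeeping you anticipate unnecessary.
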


\begin{proof}
Recall by Lemma~\ref{lem:DtoB} that $\Popt(w) = \prescript{n}{}\Popt(w|_{n-1})$. In particular, any cycle $\mathcal{C}' \in w$ with $\mathcal{C}' \cap \{n , \bar{n} \} = \emptyset$ satisfies $\mathcal{C}' \in \Popt^B(w|_{n-1})$ as well. In particular, write $\pi_T(w|_{n-1}) = \mathcal{C} \mathcal{C}_1 \cdots \mathcal{C}_j$. Note that $\sum_{i=1}^{j} \aexc{\mathcal{C}_i} \leq \aexc(\Popt^B(w|_{n-1}))$, and so Lemma~\ref{lem:DN-B} implies that $\max_j \left\{  \left| O_{\Popt^B}(\mathcal{C}_j) \right| \right \} \leq \aexc(\Popt^B(w|_{n-1}))$.

By Lemma~\ref{lem:cover-Dn}, note that for $j \geq 1$ we have $\pi_T(\Popt^j(w))$ further refines each of $\mathcal{D}_1, \mathcal{D}_2, \mathcal{C}_1, \ldots \mathcal{C}_j$. That is, the dynamic of $\Popt$ can be thought of acting independently on each of the cycles of $w$. Making the identification of $\bar{i}$ with $i$ as described in Claim~\ref{claim:double-cyst} when necessary, it follows from the previous paragraph as well as the result of Claim~\ref{claim:double-cyst} that \[\left| O_{\Popt}(w) \right| = \max_j\left\{ \left| O_{\Popt^A}(\mathcal{D}_1) \right|, \left| O_{\Popt^A}(\mathcal{D}_2) \right|, \left| O_{\Popt^B}(\mathcal{C}_j) \right| \right\} \leq \aexc(w|_{n-1}).\]
\end{proof}

Next, we make an observation about the uniqueness of pierced permutations in the $\Popt$ orbit.

\begin{lemma}\label{lem:pierced}
Suppose $w \in D_n$ is such that $w$ is pierced. Then there does not exist $i \geq  1$ such that $\Popt^i(w)$ is pierced as well.
\end{lemma}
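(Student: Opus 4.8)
The plan is to show that being pierced is a threshold phenomenon governed by the monotone clockwise drift of the predecessor of the central label $n$, and that this threshold can be crossed at most once along a forward orbit.

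First I would reduce to a single regime. Being pierced requires $\pi_T(w)$ and $\pi_T(\Popt(w))$ to be zeroed, and as observed in the proof of Lemma~\ref{lem:not-zeroed} (via Lemma~\ref{lem:cover-Dn}) a non-zeroed noncrossing projection stays non-zeroed under every further application of $\Popt$. Hence all pierced iterates in the orbit of $w$ occur inside the initial interval $0 \le k \le T$ on which every $v_k := \pi_T(\Popt^k(w))$ is zeroed, and it suffices to forbid two piercings inside this interval. I set $u_k = \Popt^k(w)$ and track $b_k := u_k^{-1}(n)$, the predecessor of $n$ in $u_k$. Since $u_{k+1} = u_k v_k^{-1}$ we have $u_{k+1}^{-1} = v_k u_k^{-1}$, so $b_{k+1} = v_k(b_k)$ is precisely the clockwise successor of $b_k$ along the balanced zero block of $v_k$, exactly as in the chain (\ref{eq:2}) of Lemma~\ref{lem:not-zeroed}. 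Because $v_{k+1}$ refines $v_k$ (Lemma~\ref{lem:cover-Dn}), the zero block only loses elements as $k$ grows, so the sequence $b_1, b_2, \dots$ drifts strictly clockwise and never returns to a position it has left; in particular it completes strictly less than one full revolution of the zero block before $n$ becomes a fixed point or the regime ends in a double-cyst.

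Next I would make the combinatorial meaning of a piercing explicit in these terms. By Lemma~\ref{lem:DtoB}, $u_{k+1} = \prescript{n}{}\Popt(u_k|_{n-1})$ throughout the zeroed regime, and unwinding the definition of the $n$-extension gives $u_{k+1}(n) = \overline{u_k(n)}$, so $|u_k(n)|$ is constant and the data of $n$ is carried rigidly around the center. A step is a piercing exactly when the canonical $|_{n-1}$-bypass of $u_{k+1}$ is forced to be the modified one rather than the naive one (equivalently, when deleting $n$ the $w_1$-way would split the balanced zero block, cf. the definitions of $w_1, w_2$ and of pierced); geometrically this is the instant at which the marching predecessor $b_k$ sweeps past the antipodal center-entry labelled $\overline{n}$. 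The zero block, being balanced, is invariant under the $180^{\circ}$ rotation $\overline{\bullet}$, so its two entries into the central point (at $n$ and at $\overline{n}$) sit a half-revolution apart in its cyclic order, and reaching the entry at $n$ is the termination event, not a piercing.

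Combining these, $b_k$ makes a single strictly clockwise, sub-revolution march toward $n$, during which it passes the lone antipodal landmark $\overline{n}$ at most once; since a piercing is exactly such a passage, $w$ cannot be pierced twice, which is the assertion of Lemma~\ref{lem:pierced}. The main obstacle is the identification in the previous paragraph: rigorously matching the abstract definition of pierced (the forced switch to the modified $n$-extension, and the recovery $\Popt^2(w) = \prescript{n}{m}\Popt(\Popt^B(w|_{n-1}))$) with the concrete event ``$b_k$ crosses the center-entry $\overline{n}$'', and checking that the two central entries really are a half-revolution apart so that no second landmark is available within the sub-revolution march. Once that dictionary is in place, the monotonicity of $b_k$ — a direct analogue of the bounded increasing chain already established for Lemma~\ref{lem:not-zeroed} — closes the argument.
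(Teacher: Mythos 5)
Your reduction to the zeroed regime (a double-cysted iterate kills zeroedness forever, by Lemma~\ref{lem:cover-Dn}) and the monotone clockwise drift of $b_k = u_k^{-1}(n)$ are both sound and match the opening moves of the paper's proof. But the core of your argument rests on an identification that you yourself flag as ``the main obstacle'' and never establish: that a step is a piercing \emph{exactly} when $b_k$ sweeps past a single fixed landmark. This identification is not correct as stated, and the gap is not a formality. First, in the Athanasiadis--Reiner model the labels $n$ and $\overline{n}$ sit at one central point and are \emph{excluded} from the cyclic order of the zero block (they form their own $2$-cycle $(n\ \overline{n})$ in the projection), so there is no ``antipodal center-entry'' in the cyclic order along which $b_k$ marches; the half-revolution separation you invoke does not exist in the order that governs $b_{k+1} = v_k(b_k)$. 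Second, and more importantly, being pierced is not a local condition on the position of $b_k$. Unwinding the definition, $u_k$ is pierced precisely when the naive bypass $w_1$ (sending $b_k \mapsto a_k$, $\overline{b_k}\mapsto\overline{a_k}$) fails to reproduce $\pi_T(u_k)$ while the modified bypass does; whether this happens depends on which \emph{other} cycles the chords $b_k \to a_k$ versus $b_k \to \overline{a_k}$ separate from or join to the zero block. The paper's proof makes this explicit: a piercing at time $i$ forces the existence of auxiliary cycles $\mathcal{C}$ and $\mathcal{D}$ with prescribed cycle-connectivity relations (disconnected from the cycle of $a$ in $\Popt^B(w|_{n-1})$ but connected in $u$, and the reverse pattern at time $i+1$), and the contradiction is extracted by a case analysis on the clockwise orientation of $a, \overline{b}, \overline{a}, b$ and on where the witnessing cycles can sit. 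None of that global connectivity bookkeeping is captured by ``crossing a landmark once.''

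To see why your heuristic cannot be patched cheaply: two distinct piercings would correspond to two separate reconfigurations of which nonzero cycles are absorbed into the zero block's convex hull, and ruling this out requires tracking how the convex hulls of the bypass chords interact with all intervening cycles as $b_k$ advances --- not just the angular position of $b_k$ relative to one point. If you want to salvage your outline, the missing step is a lemma of the form ``once the naive bypass agrees with $\pi_T(u_k)$ it continues to agree for all later $k$ in the zeroed regime,'' and proving that is essentially the content of the paper's case analysis; it does not follow from the monotonicity of $b_k$ alone.
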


In the proof of the lemma we will repeatedly make use of the notion of \emph{cycle disconnected}. 

\begin{definition}
Given $w \in D_n$, we say that cycles $\mathcal{C}_1, \mathcal{C}_2$ are \emph{cycle disconnected} (in $w$) if there do not exist cycles $\mathcal{D}_1, \ldots, \mathcal{D}_k$ such that the convex hulls of $\mathcal{D}_1$ and $\mathcal{C}_1$ have nonempty intersection in their interiors, the convex hulls of $\mathcal{D}_k$ and $\mathcal{C}_2$ have nonempty intersection in their interiors and for $1 \leq i \leq k-1$ the convex hulls of $\mathcal{D}_i$ and $\mathcal{D}_{i+1}$ have nonempty intersection in their interiors. 

Conversely, if there exist such cycles $\mathcal{D}_1, \ldots, \mathcal{D}_k$ we say that $\mathcal{C}_1$ and $\mathcal{C}_2$ are \emph{cycle connected}.
\end{definition}

Observe that for $w \in D_n$ and cycles $\mathcal{C}_1$ and $\mathcal{C}_2$ that are cycle disconnected in $w$, we necessarily have that the elements of $\mathcal{C}_1$ and the elements of $\mathcal{C}_2$ lie in distinct cycles in $\pi_T(w)$. Conversely, if $\mathcal{C}_1$ and $\mathcal{C}_2$ are cycle connected then their elements lie in the same cycle in $\pi_T(w)$. 

\begin{proof}
Suppose otherwise for the sake of contradiction, and suppose $i$ is the smallest such counterexample. Firstly, note that if there exists $j \geq 1$ such that $\Popt^j(w)$ is double-cysted, then $\pi_T(\Popt^k(w))$ cannot be zeroed for $k \geq j$ by Lemma~\ref{lem:cover-Dn}. In particular, $\Popt^k(w)$ cannot be pierced either. Consequently, we may assume for $1 \leq k \leq i$ that $\Popt^k(w)$ is not bad.

Let $w(n) = a$. Suppose $\Popt^B(w|_{n-1})^{-1}(a) = b$. Define $u \in B_{n-1}$ as follows:
\[ u(x) = \begin{cases} \Popt^B(w|_{n-1})(x) &\text{if } x \neq b, \bar{b}, \\ \bar{a} &\text{if } x = b, \\ a &\text{if } x = \bar{b}. \end{cases} \]
Then the fact that $w$ is pierced is equivalent to $\Popt(w)|_{n-1} = u$. In particular, we have that $\Popt^2(w) = \prescript{n}{}\Popt(\Popt(w)|_{n-1}) = \prescript{n}{}\Popt(u)$. By the previous paragraph and Corollary~\ref{cor:red-to-B}, the behavior of $\Popt^k(w)$ for $1 \leq k \leq i$ depends only on $(\Popt^B)^k(u)$.

Furthermore, since $w$ is pierced, it follows that there exists a cycle $\mathcal{C}$ that is cycle disconnected from the cycle $\mathcal{B}_1$ in $\Popt^B(w|_{n-1})$ that contains $a$, but cycle connected from the cycle $\mathcal{B}_2$ in $u$ that contains $a$. 

Let $\mathcal{C}_1$ be the cycle in $\Popt^{i+1}(w)$ which contains $a$. Let $\mathcal{C}_2$ be the cycle in $(\Popt^B)^{i+1}(u)$ which contains $a$, and let $b' = (\Popt^B)^{i+1}(u)^{-1}(a) $. Note that the cycles in $\Popt^{i+1}(w)$ and $(\Popt^B)^{i+1}(u)$ that do not contain any of $\{a, \bar{a} \}$ overlap. In particular, in order for $\Popt^i(w)$ to be pierced, there must be cycles $\mathcal{D} \in \Popt^{i+1}(w), (\Popt^B)^{i+1}(u)$ such $\mathcal{D}$ and $\mathcal{C}_1$ are cycle connected in $\Popt^{i+1}(w)$, while $\mathcal{D}$ and $\mathcal{C}_2$ are cycle disconnected in $(\Popt^B)^{i+1}(u)$. 

We split into two cases based on the clockwise orientation of $a,\bar{b}, \bar{a}, b$. First, suppose we are in the situation where in clockwise order on the circle we have $\bar{a}, b, a, \bar{b}$. Note that we may choose $\mathcal{C}$ such that it only contains elements that either lie clockwise between $\bar{a}$ and $b$, or lie clockwise between $a$ and $\bar{b}$. We claim that $\mathcal{C}$ cannot contain an element $c$ such that $c$ is clockwise between $\bar{a}$ and $b$ and $d = \mathcal{C}(c)$ lies clockwise between $a$ and $\bar{b}$, which would then contradict the previous observation on $\mathcal{C}$. This is because if we let $\widetilde{\mathcal{C}}$ be the cycle in $\Popt^{i+1}(w)$ that contains $d$, then $d$ would also be situated clockwise between $a$ and $\bar{b'}$, while $\widetilde{\mathcal{C}}^{-1}(d)$ is situated clockwise between $\bar{a}$ and $b'$. In particular, the convex hull of $\widetilde{\mathcal{C}}$ has nonempty interior intersection with the convex hull of $\mathcal{C}_2$, and $\widetilde{\mathcal{C}}$ would also have nonempty interior intersection with the convex hull of $\mathcal{D}$, so that $\mathcal{D}$ and $\mathcal{C}_2$ are cycle connected in $(\Popt^B)^{i+1}(u)$, a contradiction.

Next, suppose in clockwise order on the circle we have $a,b,\bar{a}, \bar{b}$. Similar to before, note that we may pick $\mathcal{C}$ to contain an element $c$ that is situated clockwise between $b$ and $\bar{a}$ such that $\mathcal{C}(c) = d$ is situated clockwise between $\bar{b}$ and $a$. We may also choose $\mathcal{D}$ such that it contains an element $c'$ that is situated clockwise between $\bar{a}$ and $\bar{b'}$ such that $d' = \mathcal{D}(c')$ is situated clockwise between $a$ and $b$. Let $\mathcal{C}'$ be the cycle in $w$ that contains $d'$. Note that if $\mathcal{C}'$ only consists of elements that are either clockwise between $\bar{a}$ and $\bar{b}$ or clockwise between $a$ and $b$ we would obtain our desired contradiction because $\mathcal{C}$, $\mathcal{C}'$ and the $ \mathcal{B}_1$ shows that $\mathcal{C}$ and $\mathcal{B}_1$ are cycle connected. 

Otherwise, suppose $d'$ lies clockwise between $a$ and $b$ while $(\mathcal{C}')^{-1}(d') = f'$ lies clockwise between $b$ and $\bar{a}$. In this case, observe that the convex hulls of $\mathcal{C}'$ and $\mathcal{B}_1$ have non-empty intersections in their interior, and the same conclusion also holds for $\overline{\mathcal{C}'}$ and $\mathcal{B}_1$. As such, in order for $\mathcal{C}$ to be cycle disconnected from $\mathcal{B}_1$, it follows that $\mathcal{C}$ can only contain elements that are situated clockwise between $f'$ and $\bar{a}$ or elements that are clockwise between $\bar{f'}$ and $a$ on the circle. Suppose $\mathcal{C}$ contains $g$, which lies clockwise between $f'$ and $\bar{a}$ such that $\mathcal{C}^{-1}(g)$ lies clockwise between $\bar{f'}$ and $a$. Let $\widetilde{\mathcal{C}}$ be the cycle in $\Popt^{i+1}(w)$ that contains $g$. Note that the convex hulls of $\widetilde{\mathcal{C}}$ and $\mathcal{C}_2$ have non-empty intersection in their interiors, and the convex hulls of $\widetilde{\mathcal{C}}$ and $\mathcal{D}$ have non-empty intersection in their interiors. It follows that $\mathcal{D}$ and $\mathcal{C}_2$ are cycle connected, which is a contradiction.

Lastly, consider the case where both $d'$ and $(\mathcal{C}')^{-1}(d') = f'$ lie clockwise between $b$ and $\bar{a}$. Note that in order for $\mathcal{C}'$ to be cycle connected to $\mathcal{B}_2$, there has to be a cycle $\mathcal{B} \in u$ which contains an element $\tilde{a}$ that lies clockwise between $d'$ and $f'$ such that $\mathcal{B}^{-1}(\widetilde{a}) = \tilde{b}$ lies clockwise between $\bar{a}$ and $b$. Let $\mathcal{B}'$ be the cycle in $\Popt^i(w)$ that contains $\tilde{a}$. Then $\mathcal{D}$, $\mathcal{B}'$ and $\mathcal{C}_2$ shows that $\mathcal{D}$ and $\mathcal{C}_2$ are cycle connected in $(\Popt^B)^{i+1}(u)$, which is a contradiction.
\end{proof}

The above observations allow us to reduce to the case of $w$ being pre-$D$-splittable. 

\begin{lemma}\label{lem:d-split}
If $\left| O_{\Popt}(w) \right| = 2n-2$, then $w$ is pre-$D$-splittable.
\end{lemma}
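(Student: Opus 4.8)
The plan is to prove this necessity statement by reducing the type $D_n$ dynamics to the type $B_{n-1}$ dynamics that we already understand, and then invoking Theorem~\ref{thm:type-B}. First I would apply Lemma~\ref{lem:not-zeroed}: since $\left|O_{\Popt}(w)\right| = 2n-2 > 2n-3$, the projection $\pi_T(w)$ must be zeroed. I then split on the value $m = w(n)$. If $m = n$, then $n$ is a singleton in $\pi_T(\Popt^k(w))$ for all $k$, forcing $\pi_T(w)$ to be non-zeroed, which is excluded. If $m = \overline{n}$, then $(n\ \overline{n})$ is a cycle of $w$ and, because $\pi_T(w)$ is zeroed, also a cycle of $\pi_T(w)$; hence $\Popt(w)(n) = w\bigl(\pi_T(w)^{-1}(n)\bigr) = w(\overline{n}) = n$, so $\Popt(w)$ fixes $\{n,\overline{n}\}$ and the remaining dynamics live in $B_{n-1}$. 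Running the $B_{n-1}$ orbit bound from the proof of Lemma~\ref{lem:not-zeroed} on $\Popt(w)$ and using that the total orbit must have the maximal length $2n-2$ isolates $w = c^{-1}$, which is pre-$D$-splittable by definition.

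The heart of the argument is the case $m = w(n) \in \pm[n-1]$, where Lemma~\ref{lem:DtoB} and Corollary~\ref{cor:red-to-B} transfer the dynamics to $w|_{n-1} \in B_{n-1}$. The first task is to rule out any double-cysted iterate. Let $j$ be minimal with $\Popt^j(w)$ bad. By Lemma~\ref{lem:pierced} at most one earlier iterate is pierced, so Corollary~\ref{cor:red-to-B} applies up to step $j$ with at most a one-step lag. Since the antiexceedance count in $B_{n-1}$ strictly decreases by $\cyc(\pi_T)\geq 1$ at each $\Popt^B$ step (Lemma~\ref{lem:DN-B}(c)) and $\aexc(w|_{n-1}) \leq 2n-3$ (Lemma~\ref{lem:DN-B}(a),(b)), we get $\aexc\bigl((\Popt^B)^{j+1}(w|_{n-1})\bigr) \leq 2n-4-j$ with no prior piercing, or $\leq 2n-3-j$ with one. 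If $\Popt^j(w)$ were double-cysted, Corollary~\ref{cor:dc-aexc} would give $\left|O_{\Popt}(\Popt^j(w))\right| \leq \aexc\bigl(\Popt^B((\Popt^j w)|_{n-1})\bigr) \leq 2n-3-j$, contradicting $\left|O_{\Popt}(\Popt^j(w))\right| = 2n-2-j$. Hence no iterate is double-cysted, so every $\pi_T(\Popt^k(w))$ remains zeroed.

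With all iterates zeroed and at most one piercing, the correspondence of Corollary~\ref{cor:red-to-B} identifies the $\Popt$-orbit of $w$ with the $\Popt^B$-orbit of $w|_{n-1}$, up to the one extra step needed for $n$ to settle (Corollary~\ref{cor:struc-D}, Lemma~\ref{lem:DN-6.5}) and the at-most-one-step piercing lag. Comparing lengths, $\left|O_{\Popt}(w)\right| = 2n-2$ forces $w|_{n-1}$ to realize the largest non-exceptional $B_{n-1}$ orbit, namely $2(n-1)-1 = 2n-3$; the only larger value $2n-2$ belongs to $\widetilde{c}^{-1}$ and would push the $D_n$ orbit past the Coxeter number $2n-2$, so $w|_{n-1} \neq \widetilde{c}^{-1}$. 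By Theorem~\ref{thm:type-B}, $w|_{n-1}$ is $B$-splittable: it has exactly $2n-4$ antiexceedances and $\cyc(\pi_T^B(w|_{n-1})) = 1$. The single-cycle condition gives $\pi_T^B(w|_{n-1}) = \widetilde{c}$, which by the defining correspondence between zeroed $D_n$ partitions and $B_{n-1}$ partitions upgrades to $\pi_T(w) = c$; together with the $2n-4$ antiexceedances this is exactly pre-$D$-splittability.

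The main obstacle I anticipate is the precise bookkeeping of orbit lengths across the reduction: making rigorous that the $n$-extension costs exactly one step and a single piercing costs at most one further step, so that $\left|O_{\Popt}(w)\right|$ equals $\left|O_{\Popt^B}(w|_{n-1})\right|$ plus a controlled constant, and that equality at the top forces $B$-splittability rather than $\widetilde{c}^{-1}$. This is where Corollary~\ref{cor:red-to-B}, Lemma~\ref{lem:pierced} and Corollary~\ref{cor:struc-D} must be assembled carefully, and where the exceptional element $c^{-1}$ (for which $w(n) = \overline{n}$, so that Lemma~\ref{lem:DtoB} and the definition of $w|_{n-1}$ do not apply) has to be separated out and verified by hand.
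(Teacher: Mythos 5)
Your overall strategy coincides with the paper's: reduce to zeroed $w$ via Lemma~\ref{lem:not-zeroed}, transfer the dynamics to $B_{n-1}$ through $w|_{n-1}$, Lemma~\ref{lem:DtoB} and Corollary~\ref{cor:red-to-B}, and control the bad iterates with Lemma~\ref{lem:pierced} and Corollary~\ref{cor:dc-aexc}. The endgame differs: the paper argues by contradiction, extracting $\aexc(w|_{n-1})\le 2n-5$ from the failure of pre-$D$-splittability and concluding from the single bound $\left|O_{\Popt}(w)\right|\le\left|O_{\Popt^B}(w|_{n-1})\right|+1\le 2n-3$, whereas you first rule out double-cysted iterates outright (a clean use of maximality together with Corollary~\ref{cor:dc-aexc} that the paper does not make) and then invoke Theorem~\ref{thm:type-B} to get $B$-splittability of $w|_{n-1}$, upgrading $\pi_T^B(w|_{n-1})=\widetilde{c}$ to $\pi_T(w)=c$. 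That upgrade is legitimate only because zeroedness was established first: the paper explicitly warns (via the example $w=63512\in D_6$) that $\pi_T^B(w|_{n-1})=\widetilde{c}$ alone does not force $\pi_T(w)=c$, so routing through Lemma~\ref{lem:not-zeroed} before this step is essential and you do so. Note also that $B$-splittability gives $\cyc(\pi_T^B(w|_{n-1}))=1$, i.e.\ a single non-singleton balanced cycle, which is not yet $\widetilde{c}$; you need the antiexceedance count $2n-4$ to force that cycle to exhaust $\pm[n-1]$.

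Two steps are genuine gaps. First, the case $w(n)=\overline{n}$: you assert that running the $B_{n-1}$ bound on $\Popt(w)$ ``isolates $w=c^{-1}$,'' but no argument is given, and the premise that the remaining dynamics live in $B_{n-1}$ fails whenever $\Popt(w)$ retains balanced cycles in $\pm[n-1]$ --- then $\pi_T(\Popt(w))$ is zeroed, contains the cycle $(n\ \overline{n})$, and does not fix $n$, so the iteration does not restrict to $\pm[n-1]$. Since Lemma~\ref{lem:DtoB} and the main reduction require $w(n)\in\pm[n-1]$, this case cannot be absorbed into the rest of the argument and needs a genuine proof that $c^{-1}$ is the only maximal-orbit element with $w(n)=\overline{n}$ (the paper's own proof is silent here as well). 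Second, your exclusion of $w|_{n-1}=\widetilde{c}^{-1}$ rests on the claim that it ``would push the $D_n$ orbit past $2n-2$,'' which needs the lower bound $\left|O_{\Popt}(w)\right|\ge\left|O_{\Popt^B}(w|_{n-1})\right|+1$; what you have set up is only an upper bound plus a ``controlled constant.'' The robust way to kill this case is the parity argument from the proof of Lemma~\ref{lem:not-zeroed}: $\widetilde{c}^{-1}$ is a single balanced cycle, and unwinding the definition of $w|_{n-1}$ shows that $w$ would then carry an odd number of balanced cycles, contradicting $w\in D_n$. Finally, the bookkeeping you flag as the main obstacle --- that the $n$-extension and a single piercing together cost at most one extra step rather than two --- is exactly where the remaining work lies: without it, $\left|O_{\Popt^B}(w|_{n-1})\right|$ could a priori be $2n-4$, in which case Theorem~\ref{thm:type-B} would not yield the required $2n-4$ antiexceedances.
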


\begin{proof} 

Suppose $w$ is such that $\left| O_{\Popt}(w) \right| = 2n-2$. By Lemma~\ref{lem:not-zeroed}, it follows that $w$ is zeroed. Suppose for the sake of contradiction that $w$ is not pre-$D$-splittable. Then it follows that $\aexc(w|_{n-1}) \leq 2n-5$. If there does not exist $i$ such that $\Popt^i(w)$ is bad, then by Corollary~\ref{cor:red-to-B}, it follows that $\left| O_{\Popt}(w) \right| = \left| O_{\Popt^B}(w|_{n-1}) \right| \leq 2n-5 + 1 = 2n-4$, which is a contradiction. The remaining situations to handle are when there is some $i$ such that $\Popt^i(w)$ is bad.

First, suppose there exists $j$ such that $\Popt^j(w)$ is pierced. Lemma~\ref{lem:pierced} implies that $j$ is unique. By Lemma~\ref{lem:DN-B} as well as the arguments before this lemma, we have that \[\left|O_{\Popt^B}(\Popt^{j+1}(w)|_{n-1})\right| \leq \aexc(\Popt^{j+1}(w)|_{n-1}) \leq 2n - 5 - j.\] 

Finally, suppose there exists $i$ such that $\Popt^i(w)$ is double-cysted. By Corollary~\ref{cor:dc-aexc}, since $\Popt^i(w)|_{n-1} = (\Popt^B)^i(w|_{n-1})$, we have that the number of additional iterations of $\Popt$ needed for $\Popt^i(w)$ to reach $e$ does not exceed the number of additional iterations of $\Popt^B$ needed for $(\Popt^B)^i(w|_{n-1})$.

Combining the above, it follows that $\left| O_{\Popt}(w) \right| \leq \left| O_{\Popt^B}(w|_{n-1}) \right| + 1 \leq 2n-5 +2 = 2n-3$, a contradiction. \end{proof}

Before we can finish up the proof of Theorem~\ref{thm:type-D}, we will need a simple observation about permutations in $B_n$.

\begin{claim}\label{claim:tot-pierced}
Suppose $w \in B_n$ is such that $\cyc(\pi_T(\Popt^i(w))) \leq 1$ for all $ i \geq 0$. Suppose also that $(x)(\overline{x}) \in w$. Define $v \in B_n$ by 
\[ v(y) = \begin{cases} w(y) &\text{if } y \not \in \{x, \overline{x} \}, \\ \overline{x} &\text{if } y = x, \\ x &\text{if } y = \overline{x}. \end{cases}\]
Then $\left|O_{\Popt}(v)\right| = \left| O_{\Popt}(w) \right| + 1$. 
\end{claim}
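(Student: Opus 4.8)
The plan is to convert the orbit-length statement into a statement about antiexceedances, and then to isolate the one geometric fact that makes the balanced $2$-cycle cost exactly one extra iteration. First I would record that the hypothesis on $w$ forces $\cyc(\pi_T(\Popt^i(w))) = 1$ at every step before the orbit reaches $e$: since $\pi_T(u) = e$ only for $u = e$, any nonidentity element has $\cyc(\pi_T(\cdot)) \ge 1$, and the hypothesis caps this at $1$. By Lemma~\ref{lem:DN-B}(c), $\aexc$ then decreases by exactly $1$ at each step, and since $\aexc(u) = 0$ iff $u = e$, the orbit of $w$ reaches $e$ after precisely $\aexc(w)$ steps, so $|O_{\Popt}(w)| = \aexc(w)+1$. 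Thus it suffices to prove the two assertions \textbf{(A)} $\aexc(v) = \aexc(w)+1$ and \textbf{(B)} $\cyc(\pi_T(\Popt^i(v))) \le 1$ for all $i\ge 0$. Granting (B), the identical counting gives $|O_{\Popt}(v)| = \aexc(v)+1$, and (A) then yields $|O_{\Popt}(v)| = |O_{\Popt}(w)|+1$.

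Assertion (A) is immediate: $v$ and $w$ agree away from $\{x,\overline x\}$, where $w$ has the fixed points $(x)(\overline x)$ and $v$ has the balanced $2$-cycle $(x\,\overline x)$. A fixed point is never an antiexceedance, while $v^{-1}(x)=\overline x$ and $v^{-1}(\overline x)=x$, so since exactly one of $x\prec\overline x$, $\overline x\prec x$ holds, exactly one of $x,\overline x$ becomes an antiexceedance of $v$; all other indices keep their status, giving $\aexc(v)=\aexc(w)+1$. For the base case $i=0$ of (B) I would argue in the type-$B$ disc model. If $w=e$ then $v=(x\,\overline x)$ is a reflection and $\pi_T(v)=(x\,\overline x)$. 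Otherwise $\cyc(\pi_T(w))=1$, and because $\pi_T(w)$ is bar-invariant its unique nontrivial block $Z$ cannot be an unpaired nonbalanced cycle; hence $Z$ is balanced, so its hull contains the centre of the disc. The block $\{x,\overline x\}$ is a diameter through the centre, so its hull meets the hull of $Z$; as the noncrossing condition forbids two such bar-invariant (centre-containing) blocks, they must merge. Since $Z$ was the only nontrivial block, $\pi_T(v)$ has the single balanced block $Z\cup\{x,\overline x\}$, i.e. $\cyc(\pi_T(v))=1$.

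The inductive step of (B) is the heart of the matter and the step I expect to be hardest. Writing $Z$ for the balanced cycle of $\pi_T(w)$ and forming $Z' := \pi_T(v)$ by inserting $x,\overline x$ at their clockwise positions in $Z$, a direct computation of $\Popt(v)=v(Z')^{-1}$ against $\Popt(w)=wZ^{-1}$ shows these two permutations differ only on the four entries $\{x,\overline x,z,\overline z\}$, where $z$ is the clockwise successor in $Z$ of the slot into which $x$ was inserted; explicitly $\Popt(v)=\Popt(w)\cdot(x\ z\ \overline x\ \overline z)$, a balanced $4$-cycle. The crux is then to show that the balanced cycle produced by this modification always crosses every other nontrivial cycle of $\Popt(v)$, so that all nontrivial cycles are pairwise cycle connected and collapse, under $\pi_T$, to a single balanced block; iterating this one-step analysis (using $\cyc(\pi_T(\Popt^i(w)))=1$ at each stage) then propagates $\cyc(\pi_T(\Popt^i(v)))\le 1$ to all $i$.

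The main obstacle, which the examples show is genuine, is precisely ruling out that $\pi_T(\Popt^i(v))$ splits off an \emph{independent} conjugate pair of nonbalanced blocks: in that event $\cyc$ would jump to $2$ and the antiexceedance bookkeeping would collapse. The mechanism I would use to prevent this is to maintain the invariant that $\Popt^i(v)$ retains a balanced cycle (equivalently, a diameter) positioned so as to cross all remaining nontrivial cycles, exactly as the diameter $\{x,\overline x\}$ does in the base case; this forces a single zero block at every step. Establishing that this invariant is preserved by one application of $\Popt$, via the explicit $4$-cycle formula above together with the known cyc-$1$ behaviour of the orbit of $w$, is the technical content that remains, and it is where the careful geometric tracking of convex hulls must be carried out.
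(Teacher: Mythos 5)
Your overall strategy coincides with the paper's: both reduce the claim, via Lemma~\ref{lem:DN-B}, to the two assertions that $\aexc(v) = \aexc(w)+1$ and that $\cyc(\pi_T(\Popt^i(v))) \leq 1$ for all $i \geq 0$. Your assertion (A) and your base case $i=0$ of (B) are fine. The problem is that you have not actually proved (B) for $i \geq 1$: you describe an invariant you would like to maintain (that $\Popt^i(v)$ retains a balanced cycle crossing every other nontrivial cycle) and then state explicitly that verifying its preservation under one application of $\Popt$ ``is the technical content that remains.'' Since (B) for general $i$ is the entire substance of the claim --- it is exactly what rules out $\pi_T(\Popt^i(v))$ splitting off a conjugate pair of nonbalanced blocks, the failure mode you yourself identify --- the proposal as written has a genuine gap rather than a routine omission. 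Moreover, the invariant you propose is unverified and stronger than what is needed; nothing in the hypotheses obviously forces $\Popt^i(v)$ to keep a balanced cycle in the required position for all $i$, so even the shape of the induction is not secured.

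For comparison, the paper closes (B) for all $i$ simultaneously with a single diagram comparison rather than an induction on a balanced-cycle invariant: setting $\mathcal{A} = \{ y : y \prec_x \overline{y}\}$ (one half of $\pm[n]$ in the cyclically shifted order $\prec_x$), it observes that the convex hull of the arcs $\{(y, \Popt^i(v)(y)) : y \in \mathcal{A}\} \cup \{(x, \Popt^i(v)(x))\}$ with $\overline{x}$ removed coincides with the convex hull of the corresponding arcs for $\Popt^i(w)$; in other words, the orbit of $v$ shadows the orbit of $w$ with the extra point $x$ spliced in, so the single-block property transfers directly from $w$ to $v$ at every step. Your one-step factorization $\Popt(v) = \Popt(w)\cdot(x\ z\ \overline{x}\ \overline{z})$ is consistent with this picture at $i=1$, but to complete your argument you would need either to prove that this relation (or your balanced-cycle invariant) propagates to all $i$, or to switch to the shadowing comparison just described.
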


\begin{proof}
Define a total order $\prec$ on $\pm[n-1]$ by $\overline{1} \prec \cdots \prec \overline{n-1} \prec 1 \prec \cdots \prec n-1$. Define $\prec_j$ to be the order obtained by doing a cyclic shift on $\prec$ such that the maximum element of $\prec_j$ is $j$. 

Note that $\aexc{v} = \aexc(w) + 1$. By Lemma~\ref{lem:DN-B}, it suffices to show that $\cyc(\pi_T(\Popt^i(v))) \leq 1$ for all $i \geq 0$ as well. To that end, let $\mathcal{A} = \{y: y \prec_x \bar{y}\} $ note that the convex hull of $\{y, \Popt^i(v))(y): y \in \mathcal{A}\} \cup \{ x, \Popt^i(v))(x) \} \setminus \{ \bar{x} \}$ coincides with that of $\{y, \Popt^i(w)(y): y \in \mathcal{A}\}$. As such, $\cyc(\pi_T(\Popt^i(v))) \leq 1$ follows from $\cyc(\pi_T(\Popt^i(w))) \leq 1$. 
\end{proof}

\begin{proof}[Proof of Theorem~\ref{thm:type-D}]

First, let us establish that in order for $w \in D_n$ to satisfy $|O_{\Popt}(w)| = 2n-2$, $w$ must be $D$-splittable. Lemma~\ref{lem:d-split} already shows that $w$ must be pre-$D$-splittable. Our work in Section~\ref{sec:B} shows that $\cyc( \pi_T((\Popt^B)^k (w|_{n-1}))) = 1$ for all $i \geq 0$. Let $x = w(n)$. Note that since $\cyc( \pi_T((\Popt^B)^k (w|_{n-1}))) = 1$, the only possibility for $\Popt^j(w)$ to be pierced is if $(x \enspace \bar{x}) \in \Popt^{j+1}(w)|_{n-1}$ but $(x)(x \enspace \bar{x}) (\Popt^B)^{j+1}(w|_{n-1})$. Let this property of $w$ be ($\dagger$). 

Suppose $w$ is pre-$D$-splittable but not $D$-splittable. Then the smallest index such that \[ (x \enspace \bar{x}) \in (\Popt^B)^{j+1}(w|_{n-1})\]
is $j = 2n-4$. But $(\Popt^B)^{j+1}(w|_{n-1}) = e$. This shows that there does not exist $i$ such that $\Popt^i(w)$ is pierced. Corollary~\ref{cor:red-to-B} and Claim~\ref{claim:double-cyst} combined show that $|O_{\Popt}(w)| = 2n-3$, a contradiction.

It follows that to complete the proof of theorem it remains to show that $D$-splittable permutations indeed have $\Popt$ forward orbit of length $2n-2$ and to also enumerate the number of such pre-$D$-splittable permutations.

Define a total order $\prec$ on $\pm[n-1]$ by $\overline{1} \prec \cdots \prec \overline{n-1} \prec 1 \prec \cdots \prec n-1$. As before, define $\prec_j$ to be the order obtained by doing a cyclic shift on $\prec$ such that the maximum element of $\prec_j$ is $j$. 

First, we observe some properties of pre-$D$-splittable permutations. Suppose $w \in D_n$ is pre-$D$-splittable, and let $w(n) = i$. For simplicity of notation, suppose $i > 0$; the analysis for when $i < 0$ is analogous. First, it is easy to see that $w|_{n-1}^{-1}(i) \prec_{i} \overline{i+1}$. Note that an alternate description of permutations $v \in B_{n-1}$ with $2n-4$ antiexceedances is that it is made up of two cycles whose elements are arranged in counterclockwise order on the circle of elements $\pm[n-1]$. In particular, it follows that $\{ x: x \prec_i w|_{n-1}^{-1}(i) \}$ lie in the same cycle. When $w|_{n-1}^{-1}(i) \prec_{i} \overline{i+1}$, it can be checked that the cycle in $w|_{n-1}$ containing $w|_{n-1}^{-1}(i)$ cannot have the property that the elements are counterclockwise on the circle, which would contradict $w|_{n-1}$ containing $2n-4$ antiexceedances. We also claim that $w|_{n-1}^{-1}(i) \neq \overline{i-1}$. This is because the unique permutation $u$ that has $2n-4$ antiexceedances satisfying $u^{-1}(i) = i-1$ is \[(i \enspace i+1 \enspace \cdots \enspace n-1 \enspace \bar{1} \enspace \bar{2} \enspace \cdots \enspace \overline{i-1}) (\overline{i} \enspace \overline{i+1} \enspace \cdots \enspace \overline{n-1} \enspace 1 \enspace 2 \enspace \cdots i-1)\] and $\cyc(\pi_T(u)) > 1$.

Now, we enumerate the number of $D$-splittable permutations. Note that since $i = w(n) \in \pm[n-1]$, there are a total of $2(n-1)$ possible choices for $w(n)$. By symmetry, we assume that $i > 0$. Observe that by definition of the $(n-1)$-projection, any $D$-splittable permutation has the property of $w|_{n-1}^{-1}(i) = w^{-1}(i)$. Suppose $w^{-1}(i) = j$ where $j \prec_i \bar{i-2}$. The elements $\{ x: i \prec_{j} x \}$ all lie in a cycle. The remaining choices we have to make is to label each of the elements of $\{ y: j-1 \prec_{\overline{i}} y \}$ with either $C_1$ or $C_2$ (indicating which cycle the element lies in). Summing over all possible choices of $j$, there are $2^{n-4} + 2^{n-5} + \cdots + 2 = 2^{n-3} - 2$ way to do the labelling. Note that $ j= \bar{i}$ gives rise to one more such permutation, for a total of $2^{n-3} - 1$ possible permutations. Accounting for the $2(n-1)$ choices for $i$, as well as $c^{-1}$, in total the number of $D$-splittable elements is given by $2(n-1) \cdot (2^{n-3} + 1) + 1 = (n-1)\cdot (2^{n-2} +2) + 1$, as desired. 

It remains to prove that pre-$D$-splittable permutations $w$ actually have $\Popt$ forward orbits of length $2n-2$. It is clear that $\left| O_{\Popt}(c^{-1})\right| = 2n-2$, so it remains to handle the case when $w \neq c^{-1}$. Our task is two-fold: first, we will show that for all $i \geq 1$, $\Popt^i(w)$ is zeroed. This would in particular imply that there does not exist $i$ such that $\Popt^i(w)$ is double-cysted. Then, we show that there exists an index $j$ such that $\Popt^j(w)$ is pierced. Let $x = w(n)$. Equivalently, the property ($\dagger$) implies that it suffices to show that for some $j \leq 2n-5$ that $(x)(x \enspace \bar{x}) (\Popt^B)^{j+1}(w|_{n-1})$. Once we have established both of these, the desired conclusion follows from Claim~\ref{claim:double-cyst}, Lemma~\ref{lem:pierced} and Claim~\ref{claim:tot-pierced} that $\left| O_{\Popt}(c^{-1})\right| = (2n-3) + 1 = 2n-2$. 

Now, let us show that for all $i \geq 1$, $\Popt^i(w)$ is zeroed. Observe that any $D$-splittable permutation $w$ has the property that $\left|\{ y: y \prec_x w^{-1}(x) \} \right| \leq n-1$ for $x \in \pm[n-1]$. In particular, for $1 \leq j \leq n-1$ and $x \in \pm[n-1]$, $\Popt^j(w)(x) \neq x$. Suppose $i$ is the smallest index such that $\Popt^i(w)$ is not zeroed. Then there exists some $z \in [n-1]$ such that for all $x \in \pm[n-1]$ with $x \prec_z \bar{z}$, we have that $\Popt^i(w)(x), \Popt^i(w)^{-1}(x)\prec_z \bar{z}$ as well. Let $z' \prec_z \bar{z}$ be the smallest index such that $\Popt^i(w)(z') \neq z'$. Then $\Popt^i(w)(z') \prec_z \bar{z}$ implies that $i < n-1$. This in turn implies that there exists $y$ such that $ y \prec_{\overline{z-1}} w^{-1}(\overline{z-1}) =: a$. Let $b$ be the element that is clockwise along the circle from $a$. Then because of the structure of $D$-splittable permutations, either $w^{-1}(y) = a$ or $w^{-1}(y) = b$. In either case, $\Popt^{j}(w)^{-1}(y) \prec_{z} \bar{z}$. This shows that the convex hull of $\{ \Popt^j(w)(x): x \prec_{z} \bar{z} \}$ has non-empty intersection with $\{x: \bar{z} \prec_z x\}$, which implies that $\Popt^i(w)$ is zeroed, as desired.

Finally, we show that there exists an index $j \leq 2n-5$ such that $(x)(x \enspace \bar{x}) \in (\Popt^B)^{j+1}(w|_{n-1})$. Note that $ w^{-1}(x) \prec_x (\Popt^B)(w)^{-1}(x) \prec_x (\Popt^B)(w)^{-2}(x) \prec_x \cdots$. Since there are at most $2n-5$ elements $y$ such that $w^{-1}(x) \prec_x y$, it follows that for some $j \leq 2n-5$, we have $x = (\Popt^B)^{j+1}(w|_{n-1})^{-1}(x)$, which is the desired conclusion.
\end{proof}

\section{Conclusion and further work}

For a finite Coxeter group $(W,S)$ with Coxeter number $h$, fixed Coxeter element $c$, and set of all reflections $T$, the length of the longest orbit of $\Popt(\cdot, c)$ is $h$ (see for instance \cite[Theorem 5.1 and Theorem 6.1]{DN21}). In the previous sections, we have given a classification and enumeration of the forward orbits with close to maximal length. While we now have a complete understanding of the elements in type $A$ and type $B$ Coxeter groups with forward orbit of length $h-1$, where $h$ is the Coxeter number of the group under consideration, we only understand elements with maximum forward orbit length for type $D$ Coxeter groups. We related the problem of classifying elements with forward orbits of length $2n-3$ in $D_n$ to the question of classifying elements with forward orbits of length $2n-2$ and $2n-3$ in $B_{n-1}$. It seems reasonable to expect that in order to understand forward orbits of length $2n-4$ in $D_n$, it would be beneficial to understand forward orbits of length at most $2n-4$ in $B_{n-1}$.

It is natural, therefore, to ask a similar enumeration and classification question for orbits of other lengths. 

\begin{question}
Let $W$ be a Coxeter group of type $A$ or $B$. Let $h$ be the Coxeter number of $W$ and let $\alpha$ be an integer in the interval $[2, h-2]$. What is the number of forward orbits of $\Popt$ with length $\alpha$? 
\end{question}

\begin{question}
Let $W$ be a Coxeter group of type $D$. Let $\alpha$ be an integer in the interval $[2, 2n-4]$. What is the number of forward orbits of $\Popt$ with length $\alpha$? 
\end{question}

Another potential direction is inspired by \cite{CG19, D22}. In order to state this question, we need to introduce a piece of notation. We say that $w \in W$ is \emph{$t$-pop-stack-sortable} if $\Pops^t(w) = e$. In \cite{CG19}, Claesson and Gomundsson proved that for every fixed $t \geq 0$, the generating function that counts $t$-pop-stack-sortable permutations in symmetric groups is rational. Defant \cite{D22} generalized this result to hyperoctahedral groups $B_n$. It is natural to ask a similar question in the context of $t$-pop-tsack-torsable permutations, defined analogously. We say that $w \in W$ is \emph{$t$-pop-tsack-torsable} if $\Popt^t(w) = e$. 

Note, however, that the situation is slightly more subtle in the context of $\Popt$ as compared to that of $\Pops$. In particular, it is false in general that the generating function which counts $t$-pop-tsack-torsable permutations is rational. For the sake of simplicity, to illustrate this point, we restrict to the context of $W = \mathfrak{S}_n$. By construction, the elements of $\mathfrak{S}_n$ that are $1$-pop-tsack-torsable are exactly the non-crossing partitions of $\mathfrak{S}_n$, and these are counted by the Catalan numbers. It follows, therefore, that the generating function that counts 1-pop-tsack-torsable permutations in $\mathfrak{S}_n$ is not rational. 

Theorem~\ref{thm:type-A}, for instance, shows that the generating function that counts $(n-1)$-pop-tsack-torsable permutations in $\mathfrak{S}_n$ is rational. This observation motivates the following question. 

\begin{question}
Let $W$ be a Coxeter group of type $A,B$, or $D$. What is the range of values of $t$ such that the generating function counting $t$-pop-tsack-torsable is rational? More precisely, what is the range of values of $t$ such that 
\[ \sum_{n \geq 1} \left| \Popt^{-(n-t)}(e) \right| z^n\]
is rational?
\end{question}

\section{Acknowledgements}
This work was done at the University of Minnesota Duluth with support from Jane Street Capital, the NSA (grant number H98230-22-1-0015), and Massachusetts Institute of Technology. We would like to thank Joe Gallian for organizing the REU and for his constant support and helpful suggestions. We are also immensely grateful to Colin Defant for proposing this problem and for his invaluable help in editing this paper. We would also like to thank Yelena Mandelshtam for her helpful comments on a draft version of this paper. Last but not least, we would like to express our appreciation towards Amanda Burcroff, Colin Defant and Noah Kravitz for advising this program.

\bibliographystyle{amsplain}
\bibliography{ref.bib}

\end{document}